\documentclass[a4paper, 12pt]{amsart}
\usepackage[left=1.5cm,right=1.5cm, top=1.5cm, bottom=2cm]{geometry}
\usepackage[latin1]{inputenc}
\usepackage[T1]{fontenc}
\usepackage{indentfirst}
\usepackage{latexsym}
\usepackage[none]{hyphenat}
\usepackage{amsmath}
\usepackage{amsthm}
\usepackage{amsfonts}
\usepackage{mathrsfs}
\usepackage{amssymb}
\usepackage{wasysym}
\usepackage{enumerate}
\usepackage{color}

\usepackage[centertags]{amsmath}
\usepackage[english]{babel}
\usepackage[active]{srcltx}

\usepackage[colorlinks]{hyperref}

\newcommand{\Real}{\mathbb{R}}

\newcommand{\supp}{\operatorname{supp}}

\newtheorem{Teo}{\textbf{Theorem}}[section]
\newtheorem{Pro}[Teo]{\textbf{{Proposition}}}
\newtheorem{Cor}[Teo]{\textbf{{Corollary}}}
\newtheorem{Def}[Teo]{\textbf{{Definition}}}
\newtheorem{Lem}[Teo]{\textbf{{Lemma}}}

\newtheorem{Exs}[Teo]{\textbf{{Examples}}}

\newtheorem{Rem}[Teo]{\textbf{{Remark}}}

\newcommand{\tresp}[1]{{\left\vert\kern-0.25ex\left\vert\kern-0.25ex\left\vert #1
    \right\vert\kern-0.25ex\right\vert\kern-0.25ex\right\vert}}

\parindent=10mm
\sloppy

\title{A class of Hamilton-Jacobi equations on  Banach-Finsler manifolds}

\author{J.A. Jaramillo, M. Jim\'enez-Sevilla, J.L. R\'odenas-Pedregosa, L. S\'anchez-Gonz\'alez}

\date{September 9th, 2014}

\address{J.A.  Jaramillo, Instituto de Matem\'atica
Interdisciplinar \\ Departamento de An\'alisis Matem\'atico \\ Facultad de Matem{\'a}ticas\\ Universidad Complutense de Madrid
\\  Madrid 28040, Spain.}

\email{jaramil@mat.ucm.es}

\address{M. Jim\'enez-Sevilla, Instituto de Matem\'atica
Interdisciplinar \\ Departamento de An\'alisis Matem\'atico \\ Facultad de Matem{\'a}ticas\\ Universidad Complutense de Madrid
\\  Madrid 28040, Spain.}

\email{marjim@mat.ucm.es}

\address{J.L. R\'odenas-Pedregosa, Departamento de Matem\'atica Aplicada I \\ Escuela T\'ecnica Superior de Ingenieros Industriales \\ Universidad Nacional de Educaci\'on a Distancia \\ Madrid 28040, Spain.}

\email{jlrodenas@ind.uned.es}

\address{L. S\'anchez-Gonz\'alez, Departamento de Ingenier{\'i}a Matem{\'a}tica\\ Facultad de CC. F{\'i}sicas y  Matem{\'a}ticas\\ Universidad de Concepci{\'o}n\\ Casilla 160-C, Concepci{\'o}n, Chile.}

\email{lsanchez@ing-mat.udec.cl}

\thanks{J.A. Jaramillo, M. Jim\'enez-Sevilla and L. S\'anchez-Gonz\'alez have been supported in part by DGES (Spain) Project MTM2012-34341. J.L. R\'odenas-Pedregosa has
   been supported in part by grant MEC BES-2013-066316 and by DGES (Spain) Project MTM2012-30942.
   L. S\'anchez-Gonz\'alez has also been
   supported by CONICYT-Chile through FONDECYT Project 11130354.}

\keywords{Finsler manifolds, variational principles, nonsmooth analysis, viscosity
solutions, Hamilton-Jacobi equations, geometry of Banach spaces}

\subjclass[2010]{58B10, 58B20, 46T05, 46T20, 46B20, 35R01}


\begin{document}

\maketitle

{\em During the last part of the editorial processing of this article, our dear friend and colleague}

{\em  Luis Sánchez-González passed away unexpectedly. We dedicate this paper to his memory.}

\begin{abstract} The concept of subdifferentiability is studied in the context of $C^1$ Finsler manifolds (modeled on a Banach space with a Lipschitz $C^1$ bump function). A class of Hamilton-Jacobi equations defined on $C^1$ Finsler manifolds is studied and several results related to the existence and uniqueness of viscosity solutions are obtained.
\end{abstract}

\section{Introduction}
This work is mainly devoted to the study of a certain class of Hamilton-Jacobi equations defined on Banach-Finsler manifolds. Along the way, we also develop some techniques of subdifferential calculus which are needed in this context. This paper is a continuation of \cite{JSSG}, where basic properties and a smooth variational principle were studied in the context of Banach-Finsler manifolds. In particular, we apply some of the results obtained in \cite{JSSG}, as well as some techniques studied in the cases of Hamilton-Jacobi equations on $\mathbb R^n$, on Banach spaces and on Riemannian manifolds \cite{I, IShort, DGZ1, D, DG, AFLM, AFLM2}, in order to obtain our results about existence and uniqueness of viscosity solutions of a class of Hamilton-Jacobi equations on Banach-Finsler manifolds.

The concepts of subdifferentiability and viscosity solutions of Hamilton-Jacobi equations have been extensively studied by many authors. The notion of viscosity solution was introduced by M.G. Crandall and P.L. Lions (see for instance \cite{CL1,CL2}). It was H. Ishii who first  introduced the method of Perron to derive the existence of viscosity solutions of Hamilton-Jacobi equations \cite{I}. The literature about this subject is huge. For an introduction we can mention the books by G. Barles \cite{Barles} and by P.L. Lions \cite{L}. For a detailed account and further information, we refer the reader to the recent survey of H. Ishii \cite{IShort} and references therein.

The study of the above mentioned concepts in (finite and infinite dimensional) Riemannian manifolds was introduced by D. Azagra, J. Ferrera and F. L\'opez-Mesas in \cite{AFLM, AFLM2}. Let us also mention the related work of Y.S. Ledyaev and Q.J. Zhu \cite{LZ} who studied subdifferentiability and generalized solutions of first-order partial differential equations on (finite dimensional) Riemannian manifolds.

In this work we attempt to continue the study of subdifferentiability and viscosity solutions of  Hamilton-Jacobi equations in a non-Riemannian setting. In this way we consider the more general context of (finite and infinite dimensional) Finsler manifolds. Our manifolds will be modeled on a Banach space $X$ which admits a $C^1$ Lipschitz bump function, which provides, as we will see, a quite natural setting for the class of Hamilton-Jacobi equations under our consideration.

The contents of the paper are arranged as follows. In the second section, we recall the definitions of  $C^1$ Finsler manifold $M$ modeled over a Banach space,  Finsler metric over the manifold $M$ (in the sense of Palais) and Fr\'{e}chet subdifferentiability of a function $f:M\to (-\infty, \infty]$.  Basic properties of the subdifferential are established, such as: a local fuzzy rule for the subdifferential of the sum, via localizing charts and the corresponding fuzzy rule in Banach spaces (\cite{DEL,AFLM}); the density of the points  of subdifferentiability (in the domain) of a lower semicontinuous function;  and also a  Mean Value inequality for  lower semicontinous functions defined on  Finsler manifolds, in the same vein as the ones obtained by R. Deville \cite{D2} for Banach spaces and D. Azagra, J. Ferrera and F. L\'opez-Mesas \cite{AFLM} for Riemannian manifolds.

In the third section, we study the existence of a unique viscosity solution of the eikonal equation defined on a bounded open subset of a $C^1$ Finsler manifold modeled on a Banach space $X$ with a $C^1$ Lipschitz bump function. The eikonal equation has been largely studied by many authors. In the works of L.A. Caffarelli and M.G. Crandall \cite{CC} and A. Siconolfi \cite{S} the authors consider the construction of a Finsler metric associated to the eikonal equation defined on bounded open subsets of $\mathbb R^n$. Let us also mention the recent work of P. Angulo and L. Guijarro \cite{AG} related to the eikonal equation on bounded open subsets of (finite dimensional) Riemannian manifolds.

In the fourth section, we obtain a comparison and stability result for bounded and locally Lipschitz viscosity solutions of (stationary) Hamilton-Jacobi equations of the form $u(x)+H(x,||du(x)||_x)=0$ for $x\in M$, where the Hamiltonian $H:M\times \mathbb R\to \mathbb R$ satisfies a condition weaker than uniform continuity. Moreover, we determine a result on the
existence of bounded viscosity solutions under  additional conditions such  as
the coercivity of the Hamiltonian. Also, let us recall here the related results of J. Borwein, Q.J. Zhu, R. Deville, G. Godefroy, V. Zizler and E.M. El Haddad \cite{BZ, DGZ1, D, DG, DGZ, EH, EH2} obtained for Banach spaces.

In the  fifth section,  we study a comparison and monotony result for  viscosity solutions of (evolution) Hamilton-Jacobi equations of the form $u_t(t,x)+H(t,x,||u_x(t,x)||_x)=0$ for $(t,x)\in [0,\infty)\times M$ and initial condition $u(0,x)=h(x)$ for $x\in M$, where the Hamiltonian $H:M\times \mathbb R\to \mathbb R$ satisfies a condition weaker  than uniform continuity  and the initial condition $h$ is bounded and continuous.
In order to establish the comparison result, additional conditions on $u$ are required:  $u$ is bounded in $[0,T)\times M$ for every $T>0$ and $u$ is locally Lipschitz.
Also, a result about existence of viscosity solutions (bounded in $[0,T)\times M$ for every $T>0$) is determined
within some specific conditions.

The notation we use is standard. The norm in a Banach space $X$ is denoted by
$||\cdot||$ and the dual norm in the dual Banach space $X^*$ is denoted as
$||\cdot||^*$. We will say that the norms $||\cdot||_{1}$
and   $||\cdot||_{2}$ defined on $X$
 are $K$-equivalent ($K\ge 1$) whether $\frac{1}{K}||v||_1\le||v||_2\le K||v||_1$,
  for every
$v\in X$.  A $C^k$  bump function $b:X\to \Real$
(where $k\in \mathbb N\cup \{\infty\}$) is a $C^k$ smooth function on X with bounded,
 non-empty support, where $\supp(b) =\overline{\{x \in  X : b(x)\neq 0\}}$.
A  function $f:(A,d)\to \mathbb R$, where $(A,d)$ is a metric space, is $L$-Lipschitz (with $L\ge 0$) if
$|f(y)-f(z)| \le L d(y,z)$,  for all $y,z \in A$.
If $M$ is a Banach-Finsler manifold, we denote by $T_xM$ the tangent space of $M$ at
 $x$ and by $T_xM^*$ the dual space
of $T_xM$. Recall that the tangent bundle of $M$ is $TM=\{(x,v):x\in M \text{ and }
v\in T_x M\}$ and the cotangent bundle of $M$ is $TM^*=\{(x,\tau):x\in M \text{ and }
\tau\in T_x M^*\}$. We refer to \cite{Deim} and \cite{JSSG}   for additional
 definitions. For a set $A$, we call a function $f:A\rightarrow (-\infty, \infty]$
 proper when the set $\mathop{dom}{f}:=\{x\in M\,:\,f(x)<+\infty\}$  is nonempty.

\section{Subdifferentials on Banach-Finsler manifolds}

Let us begin with the definition of Finsler manifold in the sense of Palais and some
basic properties.

\begin{Def} \label{defFinsler}
For $\ell \in \mathbb N\cup \{\infty\}$, let $M$ be a (paracompact) $C^\ell$ Banach manifold
 modeled on a Banach space
$(X,||\cdot||)$. Consider $TM$ the tangent  bundle
of $M$ and a continuous  map $||\cdot||_M: TM\to [0,\infty)$. We say that
$(M,||\cdot||_M)$ is a \textbf{$C^\ell$ Finsler manifold in the sense of Palais}
(see \cite{Palais,Deim,Rabier})
if $||\cdot||_M$ satisfies the following conditions:
\begin{enumerate}
\item[(P1)] For every $x\in M$, the map $||\cdot||_x:={||\cdot||_M}|_{T_xM}:
T_xM\to [0,\infty)$ is a norm on the tangent space $T_xM$ such that for every chart
$\varphi:U\to X$ with $x \in U$, the norm
$v\in X \mapsto ||d\varphi^{-1}(\varphi(x))(v)||_x$ is equivalent to $||\cdot||$ on
$X$.
\item[(P2)] For every $x_0\in M$, $\varepsilon>0$ and every chart $\varphi:U\to X$
with $x_0\in U$, there is an open neighborhood $W$ of $x_0$  such that if  $x\in W$
and $v\in X$, then
\begin{equation}\label{palaisdef}
\frac{1}{1+\varepsilon}||d\varphi^{-1}(\varphi(x_0))(v)||_{x_0}\le ||d\varphi^{-1}
(\varphi(x))(v)||_{x}\le (1+\varepsilon)||d\varphi^{-1}(\varphi(x_0))(v)||_{x_0}.
\end{equation}
In terms of equivalence of norms, the above inequalities yield to the fact that
the norms $||d\varphi^{-1}(\varphi(x))(\cdot)||_{x}$
and   $||d\varphi^{-1}(\varphi(x_0))(\cdot)||_{x_0}$ are $(1+\varepsilon)$-equivalent.

\end{enumerate}
\end{Def}

Let us remark that every Riemannian manifold is  a $C^\infty$ Finsler manifold in
 the sense of Palais (see  \cite{Palais}).
Throughout this work, we will  assume that $M$ is a (paracompact) connected ${C}^{1}$ Finsler manifold in the sense of Palais
modeled on a Banach space $X$. For simplicity we will  refer to them as
 ${C}^{1}$ Finsler manifolds.

Recall that the {\bf length} of a piecewise $C^{1}$ smooth
path $c:[a,b]\rightarrow M$ is defined as $\ell(c):=\int_{a}^b||c'(t)||_{c(t)}\,dt$.
 Besides, if $M$ is connected, then it is
connected by piecewise $C^1$ smooth paths, and the associated
{\bf Finsler metric} $d$ on $M$ is defined as
\begin{equation*}
d(p,q)=\inf\{\ell(c): \, c \text{ is a piecewise } C^1 \text{ smooth path connecting } p \text{ to } q\}.
\end{equation*}
We will say that a Finsler manifold $M$ is {\bf complete} if it is complete for the metric $d$. The following result  yields  the  local bi-Lipschitz behaviour of the
charts of a Finsler manifold.

\begin{Lem}\label{desigualdades:BiLipschitz}  \cite[Lemma 2.4.]{JSSG} {\bf (Bi-Lipschitz charts).}
Let us consider a  $C^1$ Finsler manifold $M$ modeled on a Banach space $X$ with a $C^1$
bump function and  $x_0\in M$.
Then, for every chart
$(U,\varphi)$  with $x_0\in U$ satisfying inequality \eqref{palaisdef},
there exists an open neighborhood
 $V\subset U$ of $x_0$ satisfying
\begin{equation}\label{B-L1}
(1+\varepsilon)^{-1}d(p,q)\le |||\varphi(p)-\varphi(q)|||\le (1+\varepsilon)
d(p,q), \quad \text{ for every } p,q\in V,
\end{equation}
where   $|||\cdot|||$ is the (equivalent) norm
$||d\varphi^{-1}(\varphi(x_0))(\cdot)||_{x_0}$ defined  on $X$.
\end{Lem}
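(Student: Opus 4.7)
The plan is to exploit condition (P2) by shrinking to a neighborhood on which the chart-pulled-back norms are uniformly $(1+\varepsilon)$-equivalent to the reference norm $|||\cdot|||$ on $X$. Concretely, I would first pass to a smaller open set $W'\subset W$ with $\varphi(W')=B_{|||\cdot|||}(\varphi(x_0),2R)$ for some $R>0$, on which \eqref{palaisdef} still holds for every $x$, and then set $V:=\varphi^{-1}(B_{|||\cdot|||}(\varphi(x_0),r))$ for a small $r\in(0,R)$ to be chosen at the end. Inside $V$ the image $\varphi(V)$ is convex, which is essential for constructing explicit short paths in the chart.

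For the lower estimate $|||\varphi(p)-\varphi(q)|||\ge (1+\varepsilon)^{-1} d(p,q)$, given $p,q\in V$ I would use the chart-straight segment $c(t):=\varphi^{-1}(\varphi(p)+t(\varphi(q)-\varphi(p)))$ for $t\in[0,1]$, which stays in $V\subset W'$ by convexity. Since $c'(t)=d\varphi^{-1}(\varphi(c(t)))(\varphi(q)-\varphi(p))$, applying the upper half of \eqref{palaisdef} at $x=c(t)$ with $v=\varphi(q)-\varphi(p)$ gives $||c'(t)||_{c(t)}\le (1+\varepsilon)\,|||\varphi(q)-\varphi(p)|||$, so integrating yields $d(p,q)\le\ell(c)\le(1+\varepsilon)\,|||\varphi(p)-\varphi(q)|||$.

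The upper estimate $|||\varphi(p)-\varphi(q)|||\le (1+\varepsilon)\,d(p,q)$ is the more delicate half, and I expect it to be the main obstacle: an arbitrary piecewise $C^1$ path from $p$ to $q$ need not stay inside $W'$, so \eqref{palaisdef} cannot be invoked pointwise. The plan is to rule out escaping paths by a length argument. If such a path $c$ first exits $W'$ at time $\tau$, then $|||\varphi(c(\tau))-\varphi(x_0)|||=2R$ while $|||\varphi(p)-\varphi(x_0)|||<r$, and since $c|_{[a,\tau]}$ lies in $W'$ the lower half of \eqref{palaisdef} combined with integration gives $\ell(c)\ge (1+\varepsilon)^{-1}(2R-r)$. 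Choosing $r$ small enough that $(1+\varepsilon)^{-1}(2R-r)>2(1+\varepsilon)r$ forces every piecewise $C^1$ path of length at most $2(1+\varepsilon)r$ to stay inside $W'$; since the previous step already gave $d(p,q)\le 2(1+\varepsilon)r$ for $p,q\in V$, only such contained paths are relevant in the infimum defining $d(p,q)$.

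For any contained piecewise $C^1$ path, the lower half of \eqref{palaisdef} applied pointwise at $x=c(t)$ with $v=d\varphi(c(t))(c'(t))=(\varphi\circ c)'(t)$ gives $||c'(t)||_{c(t)}\ge (1+\varepsilon)^{-1}\,|||(\varphi\circ c)'(t)|||$, and integrating together with $\int_a^b(\varphi\circ c)'(t)\,dt=\varphi(q)-\varphi(p)$ yields $\ell(c)\ge (1+\varepsilon)^{-1}\,|||\varphi(p)-\varphi(q)|||$. Taking the infimum over all such paths delivers the desired inequality. The argument is purely local and quantitative in (P2); neither the $C^1$ bump hypothesis on $X$ nor completeness of $M$ is used.
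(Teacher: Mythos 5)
Your argument is correct and is essentially the standard (and, as far as one can tell, the intended) proof: the paper itself does not prove this lemma but cites \cite[Lemma 2.4]{JSSG}, and the two halves you give --- the chart-straight segment for $d(p,q)\le(1+\varepsilon)\,|||\varphi(p)-\varphi(q)|||$, and the pointwise lower bound $\|c'(t)\|_{c(t)}\ge(1+\varepsilon)^{-1}|||(\varphi\circ c)'(t)|||$ plus the length barrier ruling out paths that escape $W'$ --- are exactly the expected route, and your observation that neither the bump function nor completeness is used is also right. The only step to tighten is the claim that $|||\varphi(c(\tau))-\varphi(x_0)|||=2R$ at the first exit time $\tau$, since a priori $c(\tau)$ need not lie in $U$; this is repaired by noting that on $[a,\tau)$ the curve $\varphi\circ c$ has derivative of $|||\cdot|||$-norm at most $(1+\varepsilon)\|c'(t)\|_{c(t)}$, hence (by completeness of $X$ and finiteness of $\ell(c)$) converges to some $y\in X$ with $|||y-\varphi(x_0)|||\ge 2R$ (otherwise $c(\tau)=\varphi^{-1}(y)\in W'$, contradicting the choice of $\tau$), which yields the same lower bound $\ell(c|_{[a,\tau)})\ge(1+\varepsilon)^{-1}(2R-r)$.
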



The concepts of subdifferential and superdifferential have been extensively studied
for functions defined on $\mathbb R^n$, infinite dimensional Banach spaces and
Riemannian manifolds. The straightforward definition in the case of Finsler
manifolds is the following.

\begin{Def}
Let $M$ be a ${C}^1$ Finsler manifold modeled on a Banach space with a
${C}^1$ Lipschitz bump function and let $f\colon M\to (-\infty,+\infty]$
be a proper function. We define the set of {\bf subdifferentials} of $f$ at a point  $x\in dom(f)=\{y\in M:\, f(y)<\infty\}$
as
\begin{equation*}
D^-f(x)=\{\Delta\equiv dg(x)\,:\,g\colon M\to\mathbb{R}\mbox{ is } C^1
smooth\mbox{ and }f-g\mbox{ attains a local minimum at } x\}\subset T_xM^*,
\end{equation*}
and the set of {\bf superdifferentials} of $f$ at $x$ as
\begin{equation*}D^+f(x)=\{\Delta\equiv dg(x)\,:\,g\colon M\to\mathbb{R}\mbox{ is }
C^1 smooth\mbox{ and }f-g\mbox{ attains a local maximum at } x\}\subset
T_xM^*.
\end{equation*}
If $D^-f(x)\not= \emptyset$ ($D^+f(x)\not=\emptyset$), we say that $f$ is
subdifferentiable (superdifferentiable) at $x$.
\end{Def}\

\noindent Notice that if a function $f$ attains a local minimum at $x$, then  $0\in D^-f(x)$. Also notice that $D^-f(x)=-D^+(-f)(x)$. In addition, we can endow every
subdifferential or superdifferential $\Delta\in T_xM^*$ of $f$ at $x$  with
the dual norm
\begin{equation*}
\|\Delta\|_x^*=\sup\limits_{\xi\in {S}_{T_xM}} |\Delta(\xi)|, \qquad \text{ where }
 {S}_{T_xM}=\{\xi\in T_xM\,:\, \|\xi\|_x=1\}.
\end{equation*}

For simplicity we will write $\|\Delta\|_x$ for the dual norm $\|\Delta\|_x^*$.
Basic  properties related to subdifferentiability on Finsler manifolds can be
deduced in the same way as D. Azagra, J. Ferrera and F. L\'opez-Mesas did in
\cite[Section 4]{AFLM} for Riemannian manifolds. Since these properties can be
 deduced  without much difficulty by using the same techniques, we will omit some
 of the proofs.

\begin{Teo}\label{Teo43}{\bf (Characterizations of subdifferentiability).}
Let $M$ be a ${C}^1$ Finsler manifold modeled on a Banach space $X$ with a
${C}^1$ Lipschitz bump function. Consider a proper function
$f\colon M\to (-\infty,+\infty]$, a point $x\in M$ and a functional
$\Delta\in T_xM^*$. The following conditions are equivalent:
\newcounter{saveenum}
\begin{enumerate}
	\item $\Delta\in D^-f(x)$.
	\item There exists a function $g\colon M\to \mathbb{R}$ such that $g$ is
Fr\'echet differentiable at $x$, $f-g$ attains a local minimum at $x$ and
$\Delta=dg(x)$.
	\item For every chart $\varphi\colon U\subset M\to X$ with $x\in U$,
if we set $\tau:=\Delta\circ d\varphi^{-1}(\varphi(x))$, then
\begin{equation*}
\mathop{\lim\,\inf}\limits_{h\to 0}\dfrac{(f\circ\varphi^{-1})(\varphi(x)+h)-f(x)
-\tau(h)}{\|h\|}\geq 0,
\end{equation*}
i.e. $\tau \in D^-(f\circ \varphi^{-1})(\varphi(x))$.
\item There exists a chart $\varphi\colon U\subset M\to X$, with $x\in U$,
such that if $\tau=\Delta\circ d\varphi^{-1}(\varphi(x))$, then
\begin{equation*}
\mathop{\lim\,\inf}\limits_{h\to 0}\dfrac{(f\circ\varphi^{-1})(\varphi(x)+h)-f(x)
-\tau(h)}{\|h\|}\geq 0,
\end{equation*}
i.e. $\tau \in D^-(f\circ \varphi^{-1})(\varphi(x))$.

\end{enumerate}
Moreover, if $f$ is locally bounded below and $M$  admits $C^1$ smooth partitions of unity, we have the equivalent condition:
\begin{enumerate}
\item[(5)] There exists a $C^1$ smooth  function $g\colon M\to\mathbb{R}$,
 $f-g$ attains a global minimum at $x$ and $\Delta=dg(x)$.
\end{enumerate}
\end{Teo}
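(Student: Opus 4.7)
The plan is to establish the cycle $(1)\Rightarrow(2)\Rightarrow(3)\Rightarrow(4)\Rightarrow(1)$, and then show $(1)\Leftrightarrow(5)$ under the additional hypotheses. The implication $(1)\Rightarrow(2)$ is immediate, since any $C^1$ function on $M$ is in particular Fr\'echet differentiable at $x$. For $(2)\Rightarrow(3)$, fix an arbitrary chart $\varphi\colon U\to X$ with $x\in U$: then $g\circ\varphi^{-1}$ is Fr\'echet differentiable at $\varphi(x)$ with derivative $\tau=\Delta\circ d\varphi^{-1}(\varphi(x))$, and the function $(f-g)\circ\varphi^{-1}$ attains a local minimum at $\varphi(x)$; writing the first-order Taylor expansion of $g\circ\varphi^{-1}$ at $\varphi(x)$ then immediately yields the desired liminf inequality. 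Finally, $(3)\Rightarrow(4)$ is trivial.

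The core of the argument is $(4)\Rightarrow(1)$. The idea is to invoke the Banach-space counterpart of this characterization: since $X$ admits a $C^1$ Lipschitz bump function, the liminf condition for $\tau$ on $f\circ\varphi^{-1}$ at $\varphi(x)$ is equivalent, by a standard consequence of the Deville--Godefroy--Zizler smooth variational principle, to the existence of a $C^1$ smooth function $\tilde g\colon X\to\mathbb R$ with $d\tilde g(\varphi(x))=\tau$ such that $f\circ\varphi^{-1}-\tilde g$ attains a local minimum at $\varphi(x)$. Then $\tilde g\circ\varphi$, defined on $U$, has differential $\tau\circ d\varphi(x)=\Delta$ at $x$, and $f-\tilde g\circ\varphi$ attains a local minimum at $x$. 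To promote this locally defined function to one on all of $M$, I would transport the $C^1$ Lipschitz bump function on $X$ through $\varphi^{-1}$ to obtain a $C^1$ smooth cut-off $\rho\colon M\to[0,1]$ that equals $1$ on a smaller neighborhood $W$ of $x$ and has support contained in $U$. The product $g:=\rho\cdot(\tilde g\circ\varphi)$, extended by zero outside $U$, is $C^1$ smooth on $M$, agrees with $\tilde g\circ\varphi$ on $W$, so $dg(x)=\Delta$ and $f-g$ attains a local minimum at $x$ on $W$; hence $\Delta\in D^-f(x)$.

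For $(1)\Leftrightarrow(5)$, the implication $(5)\Rightarrow(1)$ is obvious. Conversely, starting from $g_0$ as in $(1)$ with $f-g_0\ge f(x)-g_0(x)$ on an open $V\ni x$, the local boundedness below of $f$ together with continuity of $g_0$ implies that $g_0-f$ is locally bounded above on $M$: for every $y\in M$ there exist an open neighborhood $U_y$ and a constant $C_y\ge 0$ with $g_0-f+(f(x)-g_0(x))\le C_y$ on $U_y$. Using the existence of $C^1$ smooth partitions of unity on $M$, I would glue these upper bounds into a nonnegative $C^1$ smooth function $\phi\colon M\to[0,+\infty)$ vanishing on a smaller neighborhood $W$ of $x$ (with $\overline W\subset V$) and satisfying $\phi(y)\ge g_0(y)-f(y)+(f(x)-g_0(x))$ for every $y\notin W$. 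Then $g:=g_0-\phi$ is $C^1$ smooth with $dg(x)=\Delta$, and $f-g\ge f(x)-g(x)$ on the whole of $M$, yielding a global minimum.

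The main obstacle is $(4)\Rightarrow(1)$: one has to invoke the Banach-space subdifferential characterization (which genuinely uses the $C^1$ Lipschitz bump hypothesis on $X$) and then carry out the cut-off step while checking that the differential of the resulting function at $x$ is indeed $\Delta$. The construction of $\phi$ in $(1)\Rightarrow(5)$ via a $C^1$ smooth partition of unity subordinate to a cover on which $g_0-f$ is bounded above is conceptually standard, but requires some care to keep $\phi\equiv 0$ on a neighborhood of $x$ so that the identity $dg(x)=\Delta$ is preserved.
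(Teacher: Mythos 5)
Your proposal is correct and follows essentially the same route as the paper, which itself only sketches the argument: the implication $(2)\Rightarrow(3)$ via a first-order expansion in a chart, the key step $(4)\Rightarrow(1)$ by transporting the problem to $X$ and invoking the Deville--Godefroy--Zizler characterization of the Fr\'echet subdifferential in Banach spaces with a $C^1$ Lipschitz bump (followed by a cut-off to globalize the test function), and the equivalence with $(5)$ via $C^1$ partitions of unity together with the local lower boundedness of $f$. The only points deserving a word of care, both standard, are that the transported bump $\rho$ must have support closed in $M$ and contained in $U$ (achieved by taking it supported in a small ball whose preimage sits inside a closed Finsler ball contained in $U$, using the bi-Lipschitz chart lemma), and that in the construction of $\phi$ the members of the cover away from $x$ should be shrunk to avoid $\overline{W}$ so that $\phi$ vanishes on $W$ and $dg(x)=\Delta$ is preserved.
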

 \noindent Note that we can obtain an analogous result for the superdifferentiability
  of $f$. The proofs of (2) $\Longrightarrow$ (3) and (5) $\Longrightarrow$ (1) follow the lines of the Riemannian case \cite{AFLM}.
	The proof of (4) $\Longrightarrow$ (1) follows (via charts) from the  case of Banach spaces with a $C^1$ smooth bump \cite[Chapter 8]{DGZ}. 


Under the  assumptions of Theorem \ref{Teo43}, we get the following corollaries
related to the subdifferentiability and differentiability of $f$ at a point $x\in dom(f)$.

\begin{Cor} \label{subdiferencialporcartas} Let $M$ be a ${C}^1$ Finsler manifold modeled on a Banach space $X$
 with a ${C}^1$ Lipschitz bump function. Consider a proper function
 $f\colon M\to (-\infty,+\infty]$, a chart $\varphi\colon U\subset M\to X$
 and a point $x\in dom(f)\cap U$. Then,
\begin{align*}D^-f(x)&
=\left\{\tau\circ d\varphi(x)\,:\,\tau\in X^*,\,
 \mathop{\lim\inf}\limits_{h\to 0}\dfrac{(f\circ \varphi^{-1})
 (\varphi(x)+h)-f(x)-\tau(h)}{\|h\|}\geq 0\right\}\\
&=\{\tau \circ d\varphi(x)\,:\, \tau \in
 D^-(f\circ\varphi^{-1})(\varphi(x))\}.
\end{align*}
Moreover, $f$ is (Fr\'echet) differentiable at $x$ if and only if there exist an open subset $V$ in $M$ with
$x\in V$ and
${C}^1$ smooth functions $g,h\colon V\to\mathbb{R}$ such that
\begin{itemize}
	\item[(1)] $g(z)\leq f(z)\leq h(z)$ for all $z\in V$, and
	\item[(2)] $g(x)=f(x)=h(x)$ and
$dg(x)=dh(x)$.
\end{itemize}
\end{Cor}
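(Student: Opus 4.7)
My plan is to deduce both assertions directly from Theorem~\ref{Teo43}. For the first part, I fix a chart $\varphi:U\subset M\to X$ with $x\in U$. By condition (P1) of Definition~\ref{defFinsler}, $d\varphi(x):T_xM\to X$ is a topological linear isomorphism with inverse $d\varphi^{-1}(\varphi(x))$, so $\tau\mapsto \tau\circ d\varphi(x)$ is a linear bijection from $X^*$ onto $T_xM^*$. The equivalence $(1)\Leftrightarrow(3)$ of Theorem~\ref{Teo43} states precisely that $\Delta\in D^-f(x)$ iff $\tau:=\Delta\circ d\varphi^{-1}(\varphi(x))$ satisfies the liminf inequality in the statement, which is by definition $\tau\in D^-(f\circ\varphi^{-1})(\varphi(x))$. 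Writing $\Delta=\tau\circ d\varphi(x)$ yields both displayed equalities.

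For Part~2, the ``if'' direction is immediate: conditions (1) and (2) force $f-g$ to attain a local minimum and $f-h$ a local maximum at $x$, so $dg(x)\in D^-f(x)$ and $dh(x)\in D^+f(x)$, and these functionals coincide. Calling their common value $\Delta$ and invoking Part~1 (together with its analogue for superdifferentials) through a chart $\varphi$, we find that $\tau:=\Delta\circ d\varphi^{-1}(\varphi(x))$ belongs simultaneously to the Fr\'echet sub- and superdifferential of $\tilde f:=f\circ\varphi^{-1}$ at $\tilde x:=\varphi(x)$. Applying the liminf inequality of $D^-$ and its analogue for $D^+$ yields the two-sided estimate $\tilde f(\tilde x+h)=\tilde f(\tilde x)+\tau(h)+o(\|h\|)$, so $\tilde f$ is Fr\'echet differentiable at $\tilde x$; composing back with $\varphi$, $f$ is Fr\'echet differentiable at $x$.

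For the converse, fix again a chart $\varphi$ and pass to $\tilde f=f\circ\varphi^{-1}$, which by hypothesis is Fr\'echet differentiable at $\tilde x=\varphi(x)$ with derivative $\tau$. Writing $\tilde f(\tilde x+h)=\tilde f(\tilde x)+\tau(h)+r(h)$ with $r(h)=o(\|h\|)$, the task is to manufacture $C^1$ functions $\tilde g,\tilde h$ on a neighborhood $\tilde V$ of $\tilde x$ sandwiching $\tilde f$, with $\tilde g(\tilde x)=\tilde f(\tilde x)=\tilde h(\tilde x)$ and $d\tilde g(\tilde x)=d\tilde h(\tilde x)=\tau$. This is the classical construction in the Banach-space setting: using the $C^1$ Lipschitz bump on $X$, one builds a $C^1$ function $\Psi:X\to\Real$ with $\Psi(0)=0$, $d\Psi(0)=0$, and $\Psi(y)\ge |r(y)|$ for $\|y\|$ small (majorizing the monotone envelope $\bar r(t):=\sup_{\|y\|\le t}|r(y)|$ by a smooth function vanishing to first order at $0$, and transferring to $X$ via bump translates). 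Then
\[
\tilde h(y):=\tilde f(\tilde x)+\tau(y-\tilde x)+\Psi(y-\tilde x),\qquad \tilde g(y):=\tilde f(\tilde x)+\tau(y-\tilde x)-\Psi(y-\tilde x)
\]
fulfil all requirements, and setting $g:=\tilde g\circ\varphi$, $h:=\tilde h\circ\varphi$ on $V:=\varphi^{-1}(\tilde V)$ transports them to $M$.

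The substantive content of the corollary therefore sits in Theorem~\ref{Teo43}: Part~1 is a change-of-variables translation, and the ``if'' direction of Part~2 is the easy sandwich-implies-differentiability observation. The one step that genuinely uses the smoothness hypothesis on $X$, and which I expect to be the main obstacle in a full write-up, is the production of the $C^1$ radial majorant $\Psi$; this is standard in smooth Banach-space analysis but must be invoked carefully from \cite{DGZ} (rather than reproved here) to keep the exposition concise.
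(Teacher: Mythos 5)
Your argument is correct and follows the route the paper intends (the corollary is stated without proof as a consequence of Theorem~\ref{Teo43} and the Banach-space machinery of \cite{DGZ}): Part~1 is the change-of-variables translation of the equivalences (1)$\Leftrightarrow$(3)$\Leftrightarrow$(4), and the sandwich criterion reduces through a chart to the Banach-space case, with the $C^1$ Lipschitz bump entering only in the construction of the smooth majorant $\Psi$. The one place you do more work than necessary is the ``only if'' direction of Part~2: since Fr\'echet differentiability of $f$ at $x$ makes $\tau=df(x)\circ d\varphi^{-1}(\varphi(x))$ satisfy condition (4) of Theorem~\ref{Teo43} for both the sub- and the superdifferential, the implication (4)$\Rightarrow$(1) (and its superdifferential analogue) already hands you $C^1$ test functions $g,h$ with $f-g$ locally minimal and $f-h$ locally maximal at $x$ and $dg(x)=dh(x)=df(x)$, so after adding constants these sandwich $f$ on a small neighborhood and the bump-based majorant need not be rebuilt from scratch.
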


The differentiability of $f$ is therefore characterized as follows.

\begin{Cor} {\bf (Criterion for differentiability).} Let $M$ be a ${C}^1$ Finsler manifold modeled on a Banach
space with a ${C}^1$ Lipschitz bump function. Consider a proper function
$f\colon M\to (-\infty,+\infty]$ and a point $x\in dom(f) $. Then, $f$ is (Fr\'echet) differentiable
at $x$ if and only if $f$ is subdifferentiable and superdifferentiable at $x$.
Moreover, if $f$ is (Fr\'echet) differentiable at $x$, then $df(x)$ is the only subdifferential
and superdifferential of $f$ at $x$.
\end{Cor}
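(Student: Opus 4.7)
The plan is to prove the criterion in two implications, leveraging Theorem \ref{Teo43} and the chart-based characterization of differentiability in Corollary \ref{subdiferencialporcartas}. Both directions reduce to essentially routine arguments once the correct characterizations are invoked; the only subtle point is showing that a subdifferential and a superdifferential at the same point must coincide, which is where I expect the main (though mild) obstacle to lie.

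For the easy direction, suppose $f$ is Fr\'{e}chet differentiable at $x$. Fix any chart $\varphi\colon U\to X$ with $x\in U$ and set $\tau:=df(x)\circ d\varphi^{-1}(\varphi(x))\in X^*$. By the chain rule in the Banach space setting, $f\circ \varphi^{-1}$ is Fr\'{e}chet differentiable at $\varphi(x)$ with derivative $\tau$. In particular,
\begin{equation*}
\lim_{h\to 0}\frac{(f\circ\varphi^{-1})(\varphi(x)+h)-f(x)-\tau(h)}{\|h\|}=0,
\end{equation*}
so both the liminf and limsup conditions of Theorem \ref{Teo43}(4) (and the analogous statement for superdifferentials) hold. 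Hence $df(x)\in D^-f(x)\cap D^+f(x)$. For uniqueness, let $\Delta\in D^-f(x)$ and pick $g\in C^1$ with $f-g$ attaining a local minimum at $x$ and $dg(x)=\Delta$. Since $f$ is differentiable at $x$ and $g$ is $C^1$, the function $f-g$ is differentiable at $x$ and attains a local minimum there, so $d(f-g)(x)=0$, giving $\Delta=df(x)$. The superdifferential case is symmetric.

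For the converse, assume $\Delta_1\in D^-f(x)$ and $\Delta_2\in D^+f(x)$. By Theorem \ref{Teo43} and its superdifferential analogue, there exist an open neighborhood $V\subset M$ of $x$ and $C^1$ functions $g_1,g_2\colon V\to\mathbb R$ that are Fr\'{e}chet differentiable at $x$ (one may use the statements (2) of Theorem \ref{Teo43} and its analogue) such that $f-g_1$ attains a local minimum and $f-g_2$ attains a local maximum at $x$, with $dg_1(x)=\Delta_1$ and $dg_2(x)=\Delta_2$. Shrinking $V$, we may assume
\begin{equation*}
g_1(z)+f(x)-g_1(x)\le f(z)\le g_2(z)+f(x)-g_2(x)\qquad\text{for every }z\in V.
\end{equation*}
Subtracting, the function $g_2-g_1$ has a local minimum at $x$; since it is differentiable at $x$, its derivative vanishes there, so $\Delta_1=dg_1(x)=dg_2(x)=\Delta_2=:\Delta$.

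Finally, define $\tilde g_1(z):=g_1(z)+f(x)-g_1(x)$ and $\tilde g_2(z):=g_2(z)+f(x)-g_2(x)$ on $V$. These satisfy $\tilde g_1\le f\le \tilde g_2$ on $V$, $\tilde g_1(x)=f(x)=\tilde g_2(x)$, and $d\tilde g_1(x)=d\tilde g_2(x)=\Delta$. Applying the squeeze criterion from Corollary \ref{subdiferencialporcartas}, $f$ is Fr\'{e}chet differentiable at $x$ with $df(x)=\Delta$. Combined with the first direction, this also yields $D^-f(x)=D^+f(x)=\{df(x)\}$, completing the proof. The mild obstacle is the passage from pointwise differentiability of $g_1,g_2$ to the global domination needed by Corollary \ref{subdiferencialporcartas}, but since the corollary only requires $C^1$ smoothness of $g,h$ on $V$ together with the sandwich at $x$, the construction above suffices.
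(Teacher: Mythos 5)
Your proof is correct and follows the route the paper intends: the paper omits the argument but supplies exactly the tool you use, namely the sandwich characterization of differentiability in Corollary \ref{subdiferencialporcartas}, together with the chart-based conditions of Theorem \ref{Teo43} (as in the Riemannian case of Azagra--Ferrera--L\'opez-Mesas). Both directions, including the coincidence of the sub- and superdifferential via the local minimum of $g_2-g_1$ at $x$, are handled as expected.
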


As in the case of Banach spaces and Riemannian manifolds, the following relationship
 between the subdifferentiability and continuity holds.

\begin{Cor} {\bf (Continuity properties).} Let $M$ be a ${C}^1$ Finsler manifold modeled on a Banach
space with a ${C}^1$ Lipschitz bump function. Consider a proper function
$f\colon M\to (-\infty,+\infty]$ and a  point $x\in dom(f)$. If $f$ is subdifferentiable
(superdifferentiable) at $x$, then $f$ is lower semicontinuous (upper semicontinuous)
 at $x$.
\end{Cor}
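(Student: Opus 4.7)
The proof should be essentially a direct unpacking of the definition of subdifferentiability, so the plan is short.

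My plan is to take $\Delta \in D^-f(x)$ and use the defining property to obtain a $C^1$ smooth function $g\colon M \to \mathbb{R}$ with $dg(x) = \Delta$ such that $f - g$ attains a local minimum at $x$. This means there is an open neighborhood $V$ of $x$ on which
\begin{equation*}
f(y) - g(y) \geq f(x) - g(x) \quad \text{for every } y \in V,
\end{equation*}
equivalently $f(y) \geq g(y) + (f(x) - g(x))$ on $V$.

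Since $g$ is $C^1$ smooth (in particular continuous) at $x$, we have $\lim_{y \to x} g(y) = g(x)$. Taking the lower limit as $y \to x$ in the inequality above yields
\begin{equation*}
\liminf_{y \to x} f(y) \geq \liminf_{y \to x} g(y) + (f(x) - g(x)) = g(x) + f(x) - g(x) = f(x),
\end{equation*}
which is precisely lower semicontinuity of $f$ at $x$. The upper semicontinuity statement follows immediately by applying the subdifferentiable case to $-f$ and using that $D^+f(x) = -D^-(-f)(x)$.

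There is no real obstacle here: the only substantive input is the continuity of the test function $g$ supplied by the definition of $D^-f(x)$, and the finite value $f(x) < +\infty$ guaranteed by $x \in \mathrm{dom}(f)$ so that the quantity $f(x) - g(x)$ makes sense. No appeal to the Finsler structure, the $C^1$ Lipschitz bump, or the characterizations of Theorem \ref{Teo43} is needed; the result holds at the level of the abstract definition.
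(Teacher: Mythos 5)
Your proof is correct and is exactly the standard argument the paper has in mind (the corollary's proof is omitted in the paper, with a pointer to the identical technique in the Riemannian case): unpack the definition of $D^-f(x)$ to get a continuous test function $g$ with $f(y)\ge g(y)+f(x)-g(x)$ near $x$ and pass to the $\liminf$. The one point worth being explicit about, which you do handle, is that $x\in \mathrm{dom}(f)$ makes $f(x)-g(x)$ finite so the inequality and the limit computation are meaningful.
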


The next  results are related to the subdifferentiability of the composition,
sum and product of functions defined on Finsler manifolds.

\begin{Pro} \label{chain1} {\bf (Chain rule).} Let $M,N$ be ${C}^1$
Finsler manifolds modeled on a Banach space with a ${C}^1$ Lipschitz
bump function. Let $g\colon M\to N$ and $f\colon N\to (-\infty,+\infty]$ be
 two functions such that $f$ is subdifferentiable at $g(x)$ and $g$ is Fr\'echet
  differentiable at $x$. Then $f\circ g$ is subdifferentiable at $x$ and
$$
\{\Delta\circ dg(x)\,:\,\Delta\in D^-f(g(x))\}\subset D^- (f\circ g)(x).
$$
\end{Pro}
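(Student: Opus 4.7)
The plan is to reduce the statement to the usual Fréchet chain rule by using characterization (2) of Theorem \ref{Teo43}, which permits a test function that is merely Fréchet differentiable at the base point rather than globally $C^1$. Given $\Delta\in D^-f(g(x))$, that characterization produces a function $\psi\colon N\to\mathbb{R}$, Fréchet differentiable at $g(x)$, with $d\psi(g(x))=\Delta$ and such that $f-\psi$ attains a local minimum at $g(x)$.

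First I would form the composition $\psi\circ g\colon M\to\mathbb{R}$. Reading Fréchet differentiability through a pair of charts $\varphi_M$ around $x$ and $\varphi_N$ around $g(x)$, the representative $\varphi_N\circ g\circ\varphi_M^{-1}$ is Fréchet differentiable at $\varphi_M(x)$ and $\psi\circ\varphi_N^{-1}$ is Fréchet differentiable at $\varphi_N(g(x))$, so the classical chain rule for Fréchet derivatives between Banach spaces gives that $\psi\circ g$ is Fréchet differentiable at $x$ with
\begin{equation*}
d(\psi\circ g)(x)=d\psi(g(x))\circ dg(x)=\Delta\circ dg(x).
\end{equation*}
Second, I would transport the local minimum condition: pick a neighborhood $V\subset N$ of $g(x)$ on which $(f-\psi)(y)\geq(f-\psi)(g(x))$; since $g$ is Fréchet differentiable, hence continuous, at $x$, there exists a neighborhood $U\subset M$ of $x$ with $g(U)\subset V$, and then
\begin{equation*}
(f\circ g)(z)-(\psi\circ g)(z)\geq(f\circ g)(x)-(\psi\circ g)(x)\quad\text{for every }z\in U.
\end{equation*}
Invoking characterization (2) of Theorem \ref{Teo43} in the reverse direction with test function $\psi\circ g$ then yields $\Delta\circ dg(x)\in D^-(f\circ g)(x)$.

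The step I expect to matter most, rather than being a genuine obstacle, is the very first one: one must work with characterization (2) instead of the original $C^1$ definition of $D^-$, because the composition $\psi\circ g$ is only guaranteed to be Fréchet differentiable at the single point $x$, not $C^1$ on a neighborhood. Once this is internalized, the chart-level computation is automatic from the chart-independence of the Fréchet differential, and the transfer of the local minimum uses no more than continuity of $g$ at $x$. Since $\Delta$ was an arbitrary element of $D^-f(g(x))$, the stated inclusion follows.
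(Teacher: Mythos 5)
Your proof is correct and is exactly the intended argument: the paper omits the proof of this proposition, referring to the Riemannian case of Azagra--Ferrera--L\'opez-Mesas, where the same reduction is used. You correctly identify the one genuinely necessary ingredient, namely that the test function $\psi\circ g$ is only Fr\'echet differentiable at the single point $x$, so one must pass through the equivalence (1) $\Leftrightarrow$ (2) of Theorem \ref{Teo43} rather than the raw $C^1$ definition of the subdifferential.
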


\begin{Cor}\label{chain2}
Let $M,N$ be ${C}^1$ Finsler manifolds modeled on a Banach space with a
${C}^1$ Lipschitz bump function and assume that $\varphi\colon M\to N$
is a ${C}^1$ diffeomorphism. Then $f\colon M\to (-\infty,+\infty]$ is
subdifferentiable at $x$ if and only if $f\circ \varphi^{-1}$ is subdifferentiable
at $\varphi(x)$, and
\begin{equation*}
D^-f(x)=\{\Delta\circ d\varphi(x)\colon \Delta\in D^-(f\circ \varphi^{-1})(\varphi(x))\}.
\end{equation*}
\end{Cor}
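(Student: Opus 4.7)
The plan is to apply the chain rule (Proposition \ref{chain1}) twice---once to $\varphi$ and once to $\varphi^{-1}$---and to exploit that the differentials $d\varphi(x)$ and $d\varphi^{-1}(\varphi(x))$ are mutually inverse isomorphisms because $\varphi$ is a $C^1$ diffeomorphism. Throughout I write $y:=\varphi(x)$ for brevity.

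For the inclusion ``$\supseteq$'' and one direction of the equivalence, suppose $\Delta\in D^-(f\circ\varphi^{-1})(y)$. Since $\varphi\colon M\to N$ is $C^1$, it is Fr\'echet differentiable at $x$. Applying Proposition \ref{chain1} to the composition $f=(f\circ\varphi^{-1})\circ\varphi$ at the point $x$ (with the role of the outer function played by $f\circ\varphi^{-1}$, which is subdifferentiable at $y=\varphi(x)$) yields that $f$ is subdifferentiable at $x$ and
\[
\Delta\circ d\varphi(x)\in D^-f(x).
\]

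For the inclusion ``$\subseteq$'' and the converse direction of the equivalence, suppose $\Delta_0\in D^-f(x)$. Since $\varphi^{-1}\colon N\to M$ is $C^1$, it is Fr\'echet differentiable at $y$, so Proposition \ref{chain1} applied to $f\circ\varphi^{-1}$ at the point $y$ gives that $f\circ\varphi^{-1}$ is subdifferentiable at $y$ and
\[
\Delta_0\circ d\varphi^{-1}(y)\in D^-(f\circ\varphi^{-1})(y).
\]
Differentiating the identity $\varphi^{-1}\circ\varphi=\mathrm{id}_M$ at $x$ produces $d\varphi^{-1}(y)\circ d\varphi(x)=\mathrm{id}_{T_xM}$, which allows us to rewrite
\[
\Delta_0=\bigl(\Delta_0\circ d\varphi^{-1}(y)\bigr)\circ d\varphi(x),
\]
exhibiting $\Delta_0$ as an element of the right-hand side of the claimed equality.

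There is no genuine obstacle: both inclusions, as well as both implications of the iff, follow immediately from two symmetric applications of the already-established chain rule together with the elementary inverse-function identity on differentials. The only subtlety to keep in mind is to apply the chain rule in each direction with the correct ``outer'' and ``inner'' functions, so that the subdifferentiable function sits on the appropriate side of each composition.
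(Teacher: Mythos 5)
Your argument is correct and is exactly the intended route: the paper omits the proof of this corollary (referring to the Riemannian case), and the standard derivation is precisely your double application of Proposition \ref{chain1} to $f=(f\circ\varphi^{-1})\circ\varphi$ and to $f\circ\varphi^{-1}$, combined with the identity $d\varphi^{-1}(\varphi(x))\circ d\varphi(x)=\mathrm{id}_{T_xM}$. No gaps.
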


\begin{Pro}\label{sume-product} Let $M$ be a ${C}^1$ Finsler manifold
modeled on a Banach space with a ${C}^1$ Lipschitz bump function and
consider the functions $f,g\colon M\to (-\infty,+\infty]$.
 Then the following statements hold:
\begin{enumerate}
	\item[(1)] {\bf (Sum rule).} $D^-f(x)+D^-g(x)\subset D^-(f+g)(x)$.
	\item[(2)] {\bf (Product rule).} If $f,\,g:M\rightarrow [0,\infty)$, then
 $f(x)D^-g(x)+g(x)D^-f(x)\subset D^-(fg)(x)$.
\end{enumerate}
\end{Pro}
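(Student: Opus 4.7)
The plan for the sum rule is to transport local minima directly from the definition of the subdifferential. Given $\tau_i\in D^-f_i(x)$ with $f_1:=f$, $f_2:=g$, select $C^1$ smooth functions $\varphi_i\colon M\to\mathbb{R}$ with $d\varphi_i(x)=\tau_i$ such that each $f_i-\varphi_i$ attains a local minimum at $x$ on some open neighborhood $U_i$ of $x$. Then $\varphi_1+\varphi_2$ is $C^1$ smooth on $M$ with differential $\tau_1+\tau_2$ at $x$, and since $(f+g)-(\varphi_1+\varphi_2)=(f-\varphi_1)+(g-\varphi_2)$ is the sum of two functions each minimized at $x$, it is itself minimized at $x$ on $U_1\cap U_2$. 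This yields $\tau_1+\tau_2\in D^-(f+g)(x)$.

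For the product rule the natural candidate test function is $\Phi:=\varphi_1\varphi_2$, with $\varphi_i$ chosen as above. After adding suitable constants---which changes neither the differential nor the $C^1$ regularity---we may further assume $\varphi_i(x)=f_i(x)$ and $\varphi_i\le f_i$ on a common neighborhood $W$ of $x$. The function $\Phi$ is then $C^1$ smooth on $M$, the Leibniz rule gives $d\Phi(x)=\varphi_1(x)\,d\varphi_2(x)+\varphi_2(x)\,d\varphi_1(x)=f(x)\tau_2+g(x)\tau_1$, which is exactly the required subdifferential, and $\Phi(x)=f(x)g(x)$. Consequently the only remaining step is to check that $fg\ge\Phi$ on a neighborhood of $x$.

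The main obstacle is that the $\varphi_i$ may take negative values near points where $f_i(x)=0$, so the nonnegativity of $f$ and $g$ does not immediately force $\varphi_1\varphi_2\le fg$. We circumvent this with a short case split. If $f(x)=g(x)=0$, the conclusion is immediate: the claimed subdifferential is $0$ and $fg\ge 0=(fg)(x)$ globally, so $0\in D^-(fg)(x)$ via the zero test function. Otherwise, say $g(x)>0$ (the case $f(x)>0$ being symmetric); continuity of $\varphi_2$ gives $\varphi_2>0$ on a smaller neighborhood $V\subset W$ of $x$. For $z\in V$ with $\varphi_1(z)\ge 0$, the chain $\varphi_1(z)\varphi_2(z)\le f(z)\varphi_2(z)\le f(z)g(z)$---obtained by multiplying $\varphi_1\le f$ by $\varphi_2>0$ and then $\varphi_2\le g$ by $f\ge 0$---yields the desired inequality; for $z\in V$ with $\varphi_1(z)<0$, the trivial bound $\varphi_1(z)\varphi_2(z)<0\le f(z)g(z)$ closes the argument.
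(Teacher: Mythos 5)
Your proof is correct, and it follows exactly the route the paper intends: the paper omits this proof, deferring to the same test-function technique used for the Riemannian case in \cite[Section 4]{AFLM}, which is precisely your argument of transporting local minima through the defining $C^1$ functions. The only delicate point in the product rule---that the normalized test functions $\varphi_i$ may be negative near $x$, so that $\varphi_1\varphi_2\le fg$ is not automatic---is handled correctly by your case split on the signs of $f(x)$, $g(x)$ and of $\varphi_1(z)$.
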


Note that there are analogous statements of Propositions \ref{chain1} and
\ref{sume-product} and Corollary \ref{chain2}  for superdifferentials.

\begin{Pro} {\bf (Geometrical and topological properties of the subdifferencial).}
Let $M$ be a ${C}^1$ Finsler manifold modeled on a Banach space $X$ with a
${C}^1$ Lipschitz bump function. 
For every function $f:M\rightarrow (-\infty,\infty]$ and  $x\in dom(f)$,
 the sets $D^-f(x)$ and $D^+f(x)$ are closed and convex subsets of $T_xM^*$.
 Moreover, if $f$ is locally Lipschitz, then these sets are bounded.
\end{Pro}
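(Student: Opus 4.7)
The plan is to localize via a chart and transfer the three assertions to the Banach space $X$. Fix $x\in dom(f)$ and a chart $(U,\varphi)$ around $x$ satisfying inequality \eqref{palaisdef}, and write $|||\cdot|||=\|d\varphi^{-1}(\varphi(x))(\cdot)\|_x$ for the associated equivalent norm on $X$. By Corollary \ref{subdiferencialporcartas}, the linear bijection $\tau\mapsto \tau\circ d\varphi(x)$ identifies $D^-(f\circ\varphi^{-1})(\varphi(x))\subset X^*$ with $D^-f(x)\subset T_xM^*$; by (P1) this bijection is a topological linear isomorphism, and a direct computation shows that it identifies the dual norm $\|\cdot\|_x$ on $T_xM^*$ with the dual of $|||\cdot|||$ on $X^*$. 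Consequently, closedness, convexity, and boundedness of $D^-f(x)$ are equivalent to the corresponding properties of $D^-(f\circ\varphi^{-1})(\varphi(x))$, and I would reduce to the latter where convenient.

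Convexity I would actually prove directly on the manifold. Given $\Delta_i\in D^-f(x)$ with $C^1$ witnesses $g_i$ for $i=1,2$, the function $\lambda g_1+(1-\lambda)g_2$ is $C^1$ with differential $\lambda\Delta_1+(1-\lambda)\Delta_2$ at $x$, and since
\[
f-[\lambda g_1+(1-\lambda)g_2]=\lambda(f-g_1)+(1-\lambda)(f-g_2)
\]
is a convex combination of functions attaining a local minimum at $x$, so does the left-hand side. For closedness I would argue in $X^*$: if $\tau_n\to\tau$ in $X^*$ with each $\tau_n\in D^-(f\circ\varphi^{-1})(\varphi(x))$, choose $n$ with $\|\tau-\tau_n\|^*<\varepsilon/2$, combine the liminf inequality satisfied by $\tau_n$ with the estimate $|\tau(h)-\tau_n(h)|\leq (\varepsilon/2)\|h\|$ to obtain the liminf inequality for $\tau$ with constant $-\varepsilon$, and let $\varepsilon\to 0$.

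For boundedness, I would invoke Lemma \ref{desigualdades:BiLipschitz} to push the Lipschitz constant through the chart: if $f$ is $L$-Lipschitz on a Finsler neighborhood of $x$, then for any prescribed $\varepsilon>0$ the composition $f\circ\varphi^{-1}$ is $(1+\varepsilon)L$-Lipschitz with respect to $|||\cdot|||$ on a suitably small chart neighborhood. Any $\tau\in D^-(f\circ\varphi^{-1})(\varphi(x))$ then satisfies $\tau(h)\leq ((1+\varepsilon)L+\eta)|||h|||$ for every $\eta>0$ and all sufficiently small $h$; applying this with $\pm h$ and using linearity of $\tau$ bounds its dual norm by $(1+\varepsilon)L$, and transferring back through the chart gives $\|\Delta\|_x\leq (1+\varepsilon)L$. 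The statement for $D^+f(x)$ follows immediately from $D^+f(x)=-D^-(-f)(x)$.

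The main technical point will be the clean matching of dual norms under the chart identification, ensured by (P1) and the bi-Lipschitz behavior of the charts (Lemma \ref{desigualdades:BiLipschitz}); once this is in place, the remaining steps are routine transcriptions of the standard Banach space arguments.
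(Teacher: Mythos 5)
Your proof is correct and follows the same route the paper intends (which it leaves to the reader, citing the Riemannian case in \cite{AFLM}): transfer to $X$ via a chart using Corollary \ref{subdiferencialporcartas}, note that $\tau\mapsto\tau\circ d\varphi(x)$ is an isometric isomorphism for the dual norms since $|||d\varphi(x)v|||=\|v\|_x$, and then run the standard Banach-space arguments for convexity, closedness and, via the bi-Lipschitz charts of Lemma \ref{desigualdades:BiLipschitz}, boundedness. The only cosmetic slip is the mixing of $\|h\|$ and $|||h|||$ in the boundedness estimate, which is harmless because the two norms are equivalent.
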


The following results are fundamental  for the study of viscosity solutions of the
Hamilton-Jacobi equations on Finsler manifolds  given in the  next sections.

\begin{Pro}\label{fuzzy} {\bf (Fuzzy rule for the subdifferential of the sum).}
Let $M$ be a ${C}^1$ Finsler manifold modeled on a Banach space $X$ with a
${C}^1$ Lipschitz bump function. Let $f,g\colon M\to\mathbb{R}$ be two
 functions such that $f$ is lower semicontinuous and $g$ is locally uniformly
  continuous. Then, for every $x\in M$, every chart $(U,\varphi)$ with $x\in U$,
   every $\Delta\in D^-(f+g)(x)$ and $\varepsilon>0$, there exist $x_1,x_2\in U$,
   $\Delta_1\in D^-f(x_1)$, $\Delta_2\in D^-g(x_2)$ such that
\begin{enumerate}
	\item[(1)] $d(x_1,x)<\varepsilon$ and $d(x_2,x)<\varepsilon$,
	\item[(2)] $|f(x_1)-f(x)|<\varepsilon$ and $|g(x_2)-g(x)|<\varepsilon$,
	\item[(3)] $\|\Delta_1\circ d\varphi(x_1)^{-1}+\Delta_2\circ
d\varphi(x_2)^{-1}-\Delta\circ d\varphi(x)^{-1}\|<\varepsilon$.
\item[(4)] $d(x_1,x_2)\cdot \max \big\{ ||\Delta_1\circ d\varphi(x_1)^{-1}||,\, 
||\Delta_2\circ d\varphi(x_2)^{-1}||\big\}<\varepsilon$.
\end{enumerate}
\end{Pro}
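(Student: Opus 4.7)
The plan is to reduce everything to the standard fuzzy sum rule on the Banach space $X$ (available from \cite{DEL} thanks to the $C^1$ Lipschitz bump hypothesis), apply it to the pullbacks $f\circ\varphi^{-1}$ and $g\circ\varphi^{-1}$ at $\varphi(x)$, and then transfer the conclusion back to $M$ using the chart characterization of subdifferentials from Corollary \ref{subdiferencialporcartas}. The Finsler distance estimates will be recovered from Banach norm estimates through the bi-Lipschitz control provided by Lemma \ref{desigualdades:BiLipschitz}.

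First, after possibly shrinking $U$ to a smaller open neighborhood $V$ of $x$, Lemma \ref{desigualdades:BiLipschitz} together with (P1) of Definition \ref{defFinsler} yields a constant $C\ge 1$ such that
\begin{equation*}
C^{-1} d(p,q) \le \|\varphi(p)-\varphi(q)\| \le C\, d(p,q) \quad \text{for all } p,q \in V.
\end{equation*}
On $\varphi(V)\subset X$, the pullback $f\circ\varphi^{-1}$ is lower semicontinuous and, shrinking $V$ once more if necessary, $g\circ\varphi^{-1}$ is uniformly continuous on $\varphi(V)$ (using that $\varphi^{-1}$ is Lipschitz on bounded pieces of $\varphi(V)$ by the previous inequality, combined with the local uniform continuity of $g$). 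By Corollary \ref{subdiferencialporcartas} applied to $f+g$, the functional $\tau := \Delta\circ d\varphi(x)^{-1}\in X^{*}$ lies in $D^-(f\circ\varphi^{-1}+g\circ\varphi^{-1})(\varphi(x))$.

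Next, I would apply the Banach-space fuzzy sum rule of \cite{DEL} on $X$ to $f\circ\varphi^{-1}$, $g\circ\varphi^{-1}$ and $\tau$ at $\varphi(x)$, with parameter $\varepsilon':=\varepsilon/(C+1)$: this produces points $y_1,y_2\in\varphi(V)$ and functionals $\tau_1\in D^-(f\circ\varphi^{-1})(y_1)$, $\tau_2\in D^-(g\circ\varphi^{-1})(y_2)$ satisfying $\|y_i-\varphi(x)\|<\varepsilon'$, $|(f\circ\varphi^{-1})(y_1)-f(x)|<\varepsilon'$ and the analogous estimate for $g$, $\|\tau_1+\tau_2-\tau\|<\varepsilon'$, and $\|y_1-y_2\|\cdot\max\{\|\tau_1\|,\|\tau_2\|\}<\varepsilon'$. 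Setting $x_i:=\varphi^{-1}(y_i)\in V\subset U$ and $\Delta_i:=\tau_i\circ d\varphi(x_i)\in T_{x_i}M^{*}$, Corollary \ref{subdiferencialporcartas} then gives $\Delta_1\in D^-f(x_1)$ and $\Delta_2\in D^-g(x_2)$, with $\Delta_i\circ d\varphi(x_i)^{-1}=\tau_i$ by construction.

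Conditions (2) and (3) of the statement follow immediately from the Banach-space estimates, since $f(x_i)=(f\circ\varphi^{-1})(y_i)$ and $\Delta\circ d\varphi(x)^{-1}=\tau$; conditions (1) and (4) follow upon inserting the bi-Lipschitz factor $C$ in front of the norm estimates $\|y_i-\varphi(x)\|$ and $\|y_1-y_2\|$, and the choice $\varepsilon'=\varepsilon/(C+1)$ keeps all four quantities strictly below $\varepsilon$. The main obstacle is thus pinning down the right Banach-space statement: one needs a version of the fuzzy rule that simultaneously produces condition (4) on the product $\|y_1-y_2\|\cdot\max\{\|\tau_1\|,\|\tau_2\|\}$, which is standard under the $C^1$ Lipschitz bump hypothesis (see \cite{DEL} and \cite[Ch.~8]{DGZ}). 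Once that version is in hand, the rest of the argument is a chart-level translation controlled entirely by Lemma \ref{desigualdades:BiLipschitz}.
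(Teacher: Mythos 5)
Your proposal is correct and follows essentially the same route as the paper, which likewise obtains the result by applying the Banach-space fuzzy sum rule (cited there from Borwein--Zhu and Deville--Ghoussoub, which do include the product estimate needed for condition (4)) to the pullbacks $f\circ\varphi^{-1}$ and $g\circ\varphi^{-1}$, transferring subdifferentials back via Corollary \ref{subdiferencialporcartas} and controlling distances with the bi-Lipschitz charts of Lemma \ref{desigualdades:BiLipschitz}. Your explicit bookkeeping with the constant $C$ and the choice $\varepsilon'=\varepsilon/(C+1)$ is a fine way of making precise what the paper only sketches.
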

\noindent The proof of the above fuzzy rule follows from the analogous results for Banach spaces \cite[Theorem 2.12]{BZ} and \cite[Theorem 4.2 in Section 4.2]{DG} 
applied to the functions $f\circ \varphi^{-1}$ and $g\circ \varphi^{-1}$ defined in a neighborhood of $\varphi(x)$, Lemma \ref{desigualdades:BiLipschitz} and Corollary \ref{subdiferencialporcartas}. Recall that $\varphi$ is locally bi-Lipschitz (Lemma 
\ref{desigualdades:BiLipschitz}) and then $g\circ \varphi^{-1}$ is locally uniformly continuous. It is worth noticing that the hypothesis given in the fuzzy rule for the subdifferential of the sum can be weakened by a
more technical  assumption (see \cite[Section 2]{BZ} and \cite[Section 4.2]{DG}).
Let us remark that up to our knowledge it is not known  whether the fuzzy rule holds for every pair of
lower semicontinuous functions with finite values $u,v:X\to \mathbb R$, where $X$ is a Banach spaces with a $C^1$ Lipschitz bump.

Recall that the smooth variational principle of Deville-Godefroy-Zizler for a Banach space $X$ with a $C^1$ Lipschitz bump function \cite{DGZ1,DGZ} provides
 the subdifferentiability of  a lower
semicontinuous function $f:X\to (-\infty,\infty]$ on a dense subset of $dom(f)=\{y\in X:\, f(y)<\infty\}$. There is a similar statement for Finsler manifolds.

\begin{Pro} {\bf (Density of the set of points of subdifferentiability).}
Let $M$ be a ${C}^1$ Finsler manifold modeled on a Banach space $X$ with a
${C}^1$ Lipschitz bump function. If $f\colon M\to (-\infty,+\infty]$ is
proper and lower semicontinuous, then the subset of points of $dom(f)$
where $f$ is subdifferentiable is dense in $dom(f)$.
\end{Pro}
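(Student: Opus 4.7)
The plan is to reduce the statement to the Deville-Godefroy-Zizler (DGZ) smooth variational principle for lower semicontinuous functions on $X$, by working in a chart around $x_0$ and extending the local pullback of $f$ to a globally lsc proper bounded-below function via a logarithmic penalty built from a $C^1$ Lipschitz bump. Fix $x_0\in \mathop{dom}(f)$ and $\delta>0$; the goal is to produce $y\in \mathop{dom}(f)$ with $d(y,x_0)<\delta$ and $D^-f(y)\neq\emptyset$. By Lemma \ref{desigualdades:BiLipschitz}, I choose a chart $(U,\varphi)$ with $\varphi(x_0)=0$ and a neighbourhood $V\subset U$ of $x_0$ on which $\varphi$ is $(1+\eta)$-bi-Lipschitz between $(V,d)$ and $(\varphi(V),|||\cdot|||)$ for some small $\eta\in(0,1)$, where $|||\cdot|||:=\|d\varphi^{-1}(0)(\cdot)\|_{x_0}$. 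Using lower semicontinuity of $f$ at $x_0$, I shrink $V$ so that $f\ge m$ on $V$ for some $m\in\mathbb{R}$, and fix $r>0$ with $\overline{B}_{|||\cdot|||}(0,r)\subset \varphi(V)$ and $(1+\eta)r<\delta$. Then $F:=f\circ\varphi^{-1}$ is lsc and bounded below by $m$ on $B_{|||\cdot|||}(0,r)$.

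Using the $C^1$ Lipschitz bump on $X$ (rescaled and translated as needed) I produce a $C^1$ Lipschitz function $b:X\to[0,1]$ with $b(0)>0$ and $\supp b\subset B_{|||\cdot|||}(0,r)$, and set
\[
\tilde F(y):=F(y)-\log b(y) \ \text{when } b(y)>0,\qquad \tilde F(y):=+\infty \ \text{when } b(y)=0.
\]
Then $\tilde F$ is proper (since $\tilde F(0)=f(x_0)-\log b(0)<+\infty$), bounded below by $m$ (because $-\log b\ge 0$ when $b\le 1$), and lsc on all of $X$: on the open set $\{b>0\}\subset B_{|||\cdot|||}(0,r)$ it is the sum of an lsc and a continuous function, while at any $y$ with $b(y)=0$ the estimate $\tilde F(z)\ge m-\log b(z)\to+\infty$ as $z\to y$ forces $\liminf_{z\to y}\tilde F(z)=+\infty=\tilde F(y)$. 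The DGZ smooth variational principle \cite{DGZ1,DGZ} applied to $\tilde F$ then provides a bounded $C^1$ Lipschitz function $g:X\to\mathbb{R}$ such that $\tilde F-g$ attains a global minimum at some $y_0\in X$. Finiteness of $\tilde F(y_0)$ forces $b(y_0)>0$, so $y_0\in B_{|||\cdot|||}(0,r)$ and $-\log b$ is $C^1$ on a neighbourhood of $y_0$. Rewriting $\tilde F-g=F-(g+\log b)$ shows that $F$ attains a local minimum modulo the $C^1$ function $g+\log b$ at $y_0$, hence $d(g+\log b)(y_0)\in D^-F(y_0)$. By Corollary \ref{subdiferencialporcartas}, $f$ is subdifferentiable at $y:=\varphi^{-1}(y_0)$, and the bi-Lipschitz estimate gives $d(y,x_0)\le (1+\eta)|||y_0|||<(1+\eta)r<\delta$, as required.

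The main delicate point is that $F$ is only defined and lsc on an open subset of $X$, so DGZ cannot be invoked directly; the logarithmic penalty $-\log b$ is precisely what makes the extension $\tilde F$ globally lsc on $X$ (using the lower bound $F\ge m$ near the boundary of $\supp b$) while leaving $\tilde F$ $C^1$-perturbable on $\{b>0\}$, and the freedom to pick the bump with support in an arbitrarily small ball around $0$ is what pins the minimizer $y_0$ inside the prescribed $\delta$-neighbourhood of $x_0$.
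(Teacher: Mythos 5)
Your proof is correct, and it follows the same overall strategy as the paper's outline: localize $f$ through a bi-Lipschitz chart (Lemma \ref{desigualdades:BiLipschitz}), extend the pullback $f\circ\varphi^{-1}$ to a proper, lower semicontinuous, bounded-below function on all of $X$, invoke the Deville--Godefroy--Zizler machinery there, and transfer the resulting subdifferential back through Corollary \ref{subdiferencialporcartas}. The difference lies in the extension device and in which Banach-space result is cited. The paper sets $L=f\circ\varphi^{-1}$ on a small \emph{closed} neighborhood $C$ of $\varphi(x_0)$ and $L=+\infty$ off $C$ (automatically lsc because $C$ is closed), then quotes the density-of-subdifferentiability theorem on Banach spaces and discards the approximating points that might not yet lie in the interior of $C$. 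You instead build the extension with a logarithmic bump penalty $-\log b$, which has the advantage of being $C^1$ where it is finite, so you can apply the smooth variational principle itself and read a subdifferential of $F$ off the minimizer directly, with no boundary points to handle, the lower bound $f\ge m$ near $x_0$ (coming from lower semicontinuity) supplying the boundedness below that the variational principle requires, and the small support of $b$ pinning the minimizer near $\varphi(x_0)$. Both routes are sound; yours is somewhat more self-contained, the paper's somewhat shorter.
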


\noindent Let us give an outline of the proof: Given a point $x\in dom(f)$ and a chart $(U,\varphi)$ with $x\in U$, we consider the lower semicontinuous function $L:X\to (-\infty,\infty] $
defined as $L=f\circ \varphi^{-1}$  in a closed neighborhood $C$ of $\varphi(x)$ ($C$ small enough such that $C\subset \varphi(U)$) and  $L=\infty$ in $X\setminus C$. The analogous result  on Banach spaces establishes that there is a sequence of subdifferentiable points of $L$ in $X$ with limit $\varphi(x)$. Thus, by Corollary \ref{subdiferencialporcartas}, there is a sequence of subdifferentiable points of $f$ in $U$
with limit  $x$.

Let us recall the well-known concepts of lower and upper semicontinuous envelopes of a function.

\begin{Def}
Let $M$ be a ${C}^1$ Finsler manifold modeled on a Banach space $X$. For a function $u:\Omega \to \Real$ defined on an open subset $\Omega \subset M$, the {\bf upper semicontinuous envelope} $u^*$ of $u$  is defined by
\begin{equation*}
u^*(x)=\inf\{v(x): \, v:\Omega \to \Real \text{ is continuous and } u\le v \text{ on } \Omega\} \qquad \text{for any $x\in \Omega$.}
\end{equation*}
\end{Def}
The {\bf lower semicontinuous envelope} $u_*$ is defined in a similar way.
 Recall that 
\begin{equation*}
u^*(x)=\lim_{r\to 0^+}\bigl(\sup_{y\in B(x,r)}u(y)\bigr) \  \  \  \text{ and } \  \ \
u_*(x)=\lim_{r\to 0^+} \bigl(\inf_{y\in B(x,r)}u(y)\bigr) \  \  \ \text{ for } x\in \Omega,
\end{equation*}
where $B(x,r)$ denotes the open  ball of center $x$ and
radius $r>0$ in the Finsler manifold $M$.
The following result of stability of superdifferentials is fundamental
  in the theory of viscosity solutions.

\begin{Pro}\label{conv} {\bf (Stability of the superdifferentials).}
Let $M$ be a ${C}^1$ Finsler manifold modeled on a Banach space $X$ with a
${C}^1$ Lipschitz bump function. Let $\Omega$ be an open subset of $M$. Let $\mathcal{F}$ be a locally uniformly bounded family of upper semicontinuous functions from $\Omega$ into $\mathbb R$ and $u=\sup\{v:\, v\in \mathcal{F}\}$ on $\Omega$. Then, for every $x\in \Omega$ and every $\Delta \in D^+u^*(x)$, there exist sequences  $\{v_n\}_{n\in \mathbb N}$ in $\mathcal{F}$ and $\{(x_n,\Delta_n)\}_{n\in \mathbb N}$ in $TM^*$ with $x_n \in \Omega$ and $\Delta_n \in D^+v_n(x_n)$ for every $n\in \mathbb N$, such that
\begin{itemize}
\item[(i)] $\lim_{n\to \infty} v_n(x_n)=u^*(x)$, and
\item[(ii)] $\lim_{n\to \infty} (x_n,\Delta_n) =(x,\Delta)$ in the cotangent bundle $TM^*$, i.e.
$\lim_{n\to \infty}d(x_n,x)=0$ and
 $\lim_{n \to \infty} ||\Delta_n\circ d\varphi(x_n)^{-1}-
\Delta\circ d\varphi(x)^{-1}||=0$ for every chart $(U, \varphi)$ on $M$ with $x\in U$. (Notice that, in general, we assume that 
$\Delta_n\circ d\varphi(x_n)^{-1}$ are defined only for $n\ge n_0$, where  $n_0$ depends on the chart $(U, \varphi)$).

\end{itemize}
\end{Pro}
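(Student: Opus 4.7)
The plan is to transfer the problem to the model Banach space $X$ via a single chart around $x$, apply the Banach-space version of the stability of superdifferentials there, and then push the conclusion back to $M$ using the dictionary provided by the superdifferential analogue of Corollary \ref{subdiferencialporcartas} together with the bi-Lipschitz behaviour of charts (Lemma \ref{desigualdades:BiLipschitz}).

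Fix a chart $(U,\varphi)$ with $x\in U\subset \Omega$ satisfying the Palais condition (P2) for some small $\varepsilon>0$, and set $V=\varphi(U)$, $y=\varphi(x)$, $\tilde v=v\circ\varphi^{-1}$ for each $v\in\mathcal F$, $\tilde u=u\circ\varphi^{-1}$, $\tilde{\mathcal F}=\{\tilde v\colon v\in\mathcal F\}$ and $\tau=\Delta\circ d\varphi^{-1}(y)\in X^*$. Since $\varphi$ is a homeomorphism from $U$ onto $V$, one checks that $\tilde u^*=u^*\circ\varphi^{-1}$ on $V$; the family $\tilde{\mathcal F}$ is locally uniformly bounded and formed by upper semicontinuous functions; and $\tilde u=\sup\tilde{\mathcal F}$. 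Corollary \ref{subdiferencialporcartas}, in its superdifferential version, then yields $\tau\in D^+\tilde u^*(y)$.

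Next I would apply the Banach-space counterpart of the proposition at $y$ to $\tilde{\mathcal F}$. This is the standard argument based on the Deville--Godefroy--Zizler smooth variational principle (cf.\ \cite{DGZ,DG}): choose a $C^1$ function $g$ on $V$ with $dg(y)=\tau$ and $\tilde u^*\le g$ near $y$ with equality at $y$, select $z_k\to y$ with $\tilde u(z_k)\to \tilde u^*(y)$ and then $v_k\in\mathcal F$ with $\tilde v_k(z_k)>\tilde u(z_k)-1/k$, and apply the variational principle to the lower semicontinuous function $g-\tilde v_k$ on a small ball around $z_k$ to produce a nearby point $y_k$ and a small $C^1$ perturbation $h_k$ (with $\|dh_k\|_\infty\to 0$) such that $\tilde v_k-(g+h_k)$ attains a local maximum at $y_k$; set $\tau_k=d(g+h_k)(y_k)$. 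The outcome is sequences $\{v_n\}\subset\mathcal F$, $\{y_n\}\subset V$ and $\{\tau_n\}\subset X^*$ with $\tau_n\in D^+\tilde v_n(y_n)$, $y_n\to y$, $\tau_n\to\tau$ (using continuity of $dg$) and $\tilde v_n(y_n)\to u^*(x)$.

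To conclude, set $x_n=\varphi^{-1}(y_n)\in U$ and $\Delta_n=\tau_n\circ d\varphi(x_n)\in T_{x_n}M^*$; then $\Delta_n\in D^+v_n(x_n)$ by Corollary \ref{subdiferencialporcartas}, condition (i) is immediate, $d(x_n,x)\to 0$ follows from Lemma \ref{desigualdades:BiLipschitz}, and $\Delta_n\circ d\varphi(x_n)^{-1}=\tau_n\to\tau=\Delta\circ d\varphi(x)^{-1}$ in $X^*$ by construction. Convergence in any other chart $(W,\psi)$ with $x\in W$ is obtained from the identity $\Delta_n\circ d\psi(x_n)^{-1}=\bigl(\Delta_n\circ d\varphi(x_n)^{-1}\bigr)\circ d(\varphi\circ\psi^{-1})(\psi(x_n))$ combined with $C^1$ smoothness of the transition $\varphi\circ\psi^{-1}$, which forces the right-hand side to converge to $\Delta\circ d\psi(x)^{-1}$ in $X^*$. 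The main obstacle lies in the Banach-space step, and specifically in keeping the points $y_k$, the functionals $\tau_k$ and the values $\tilde v_k(y_k)$ moving coherently under the perturbation supplied by the smooth variational principle; once that is secured, the manifold-level statement is essentially a bookkeeping exercise through the chart.
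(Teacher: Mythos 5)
Your proposal is correct and follows essentially the same route as the paper: fix a chart, transfer $\mathcal F$, $u$ and $u^*$ to the model space, apply the Banach-space stability result (DGZ, Ch.\ VIII, Prop.\ 1.6, whose variational-principle proof you also sketch), and pull back via Corollary \ref{subdiferencialporcartas}, finishing with the chart-independence of condition (ii) through the $C^1$ transition maps. The paper merely cites the Banach-space step, so your sketch of it is a harmless elaboration rather than a different argument.
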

\noindent Let us point out that the proof of Proposition \ref{conv} follows the lines
of the Riemannian case: for a fixed chart $(A,\psi)$ of $M$ with $x\in A\subset \Omega$,
we consider the functions $u\circ \psi^{-1} =\sup \{v\circ \psi^{-1} : \, v\in \mathcal F\}$
and $(u\circ \psi^{-1})^*=u^*\circ \psi^{-1}$, which are defined in the open neighborhood $\psi(A)$ of $\psi(x)\in X$.
Next, we apply the analogous result for Banach spaces to the function $u^*\circ \psi^{-1}$ \cite[Chapter VIII. Proposition 1.6]{DGZ} and Corollary \ref{subdiferencialporcartas} to obtain the assertions (i) and (ii)  for the chart $(A,\psi)$. Next,  it can be easily checked that, in fact, condition (ii) holds for every chart $(U,\varphi)$ with $x\in U$. 

\medskip

Now, let us give a local version of  Deville's mean value inequality, which will be
essential in order to prove the uniqueness of the eikonal equation on Finsler
manifolds. Recall that, for a Finsler manifold $M$, the open (closed) ball of center $x$ and
radius $r>0$ is denoted by $B(x,r)$ ($\overline{B}(x,r)$).

\begin{Teo} {\bf (Local Deville's mean value inequality for Finsler manifolds).}
\label{devillemean}
Let $M$ be a ${C}^1$ Finsler manifold modeled on a Banach space $X$ with a
${C}^1$ Lipschitz bump function and consider $p\in M$ and $\delta>0$.
Let $f\colon B(p,4\delta)\subset M\to\mathbb{R}$ be a lower
semicontinuous function satisfying  $\|\xi\|_x\le K$  for every $\xi\in D^-f(x)$ and $x\in B(p,4\delta)$.
Then $f$ is $K$-Lipschitz  on $B(p,\delta)$.
\end{Teo}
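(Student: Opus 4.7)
The plan is to reduce the statement to the Banach-space version of Deville's mean value inequality (see \cite{D2} and \cite{DGZ}), applied via charts, and then to patch the resulting local estimates along a short piecewise $C^1$ path connecting any two prescribed points of $B(p,\delta)$.

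Fix $x,y\in B(p,\delta)$ and $\varepsilon\in(0,\delta)$. Since $d(x,y)<2\delta$, I first choose a piecewise $C^1$ path $c\colon[0,1]\to M$ with $c(0)=x$, $c(1)=y$ and $\ell(c)<d(x,y)+\varepsilon$. The triangle inequality $d(p,c(t))\le d(p,x)+\ell(c)<3\delta+\varepsilon<4\delta$ guarantees that the image of $c$ lies in $B(p,4\delta)$. Next, for every $s\in[0,1]$, Lemma \ref{desigualdades:BiLipschitz} together with the Palais condition (P2) provides a chart $(U_s,\varphi_s)$ at $c(s)$ and a neighborhood of the form $V_s=\varphi_s^{-1}(B_s)\subset U_s\cap B(p,4\delta)$, where $B_s\subset X$ is an open ball (hence convex), on which both the bi-Lipschitz estimate \eqref{B-L1} and the $(1+\varepsilon)$-equivalence of norms from (P2) hold (the norm $|||\cdot|||$ on $X$ used in both estimates is the one appearing in Lemma \ref{desigualdades:BiLipschitz}). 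By compactness of $c([0,1])$, I extract a finite subcover $V_{s_1},\ldots,V_{s_n}$ and a partition $0=r_0<r_1<\cdots<r_n=1$ such that $c([r_{i-1},r_i])\subset V_{s_i}$ for each $i$.

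The next step is to transfer the subdifferential bound to each chart. By Corollary \ref{subdiferencialporcartas}, every subdifferential of $g_i:=f\circ\varphi_{s_i}^{-1}$ at a point $\varphi_{s_i}(z)\in B_{s_i}$ has the form $\tau=\xi\circ d\varphi_{s_i}^{-1}(\varphi_{s_i}(z))$ with $\xi\in D^-f(z)$, and the $(1+\varepsilon)$-equivalence of norms combined with the hypothesis $\|\xi\|_z\le K$ gives $\|\tau\|^*\le(1+\varepsilon)K$ in the dual of $(X,|||\cdot|||)$. Because $X$ admits a $C^1$ Lipschitz bump function and $B_{s_i}$ is open and convex, the Banach-space Deville mean value inequality yields that $g_i$ is $(1+\varepsilon)K$-Lipschitz on $B_{s_i}$ with respect to $|||\cdot|||$. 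Combining with \eqref{B-L1}, I obtain, for each $i$,
\begin{equation*}
|f(c(r_i))-f(c(r_{i-1}))|\le (1+\varepsilon)\,K\,|||\varphi_{s_i}(c(r_i))-\varphi_{s_i}(c(r_{i-1}))|||\le (1+\varepsilon)^2 K\,\ell\bigl(c|_{[r_{i-1},r_i]}\bigr).
\end{equation*}
Summing these inequalities and using $\ell(c)<d(x,y)+\varepsilon$ will produce $|f(y)-f(x)|\le (1+\varepsilon)^2 K(d(x,y)+\varepsilon)$, and letting $\varepsilon\to 0^+$ will yield the required bound $|f(y)-f(x)|\le K\,d(x,y)$.

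The main technical obstacle I anticipate is the simultaneous control, inside each chart, of Finsler and Banach data: condition (P2) costs one factor of $1+\varepsilon$ in the bound for the dual norm of the subdifferential of $g_i$, and Lemma \ref{desigualdades:BiLipschitz} costs a second factor of $1+\varepsilon$ in comparing $|||\varphi_{s_i}(\cdot)-\varphi_{s_i}(\cdot)|||$ with $d(\cdot,\cdot)$. Both factors are harmless because they can be made arbitrarily close to one by shrinking the $V_{s_i}$ and refining the partition accordingly, which, combined with the freedom to take $\ell(c)$ arbitrarily close to $d(x,y)$, produces the sharp Lipschitz constant $K$. The convexity of $\varphi_{s_i}(V_{s_i})$, required to apply the Banach-space mean value inequality, is secured from the outset by choosing each $V_{s_i}$ as the pre-image of an open ball in $X$.
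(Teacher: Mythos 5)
Your proposal is correct and follows essentially the same route as the paper's own proof: approximate $d(x,y)$ by a short piecewise $C^1$ path lying in $B(p,4\delta)$, cover it by finitely many bi-Lipschitz charts with convex images satisfying the Palais condition up to a factor $1+\varepsilon$, transfer the subdifferential bound through Corollary \ref{subdiferencialporcartas}, apply the Banach-space Deville mean value inequality on each convex chart image, and let $\varepsilon\to 0^+$ to recover the sharp constant $K$. The only cosmetic difference is that you take the chart images to be balls while the paper merely requires them to be convex.
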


\begin{proof}
Let us fix $\varepsilon>0$ and consider for every pair of points $x,y\in B(p,\delta)$
 a continuous piecewise ${C}^1$ smooth path $\gamma\colon [0,T]\to M$ such that
 $\gamma(0)=x$, \,
  $\gamma(T)=y$ and $\ell(\gamma)<d(x,y)+\min\{\delta, \varepsilon\}$.
  Notice that in this case, $\ell(\gamma)<d(x,y)
  +\delta\le d(x,p)+d(p,y)+\delta<3\delta$.
  Thus, for every $z\in \gamma([0,T])$,
$d(p,z)\le d(p,x)+d(x,z)<\delta+\ell(\gamma)<\delta+3\delta=4\delta$ and
this yields $\gamma([0,T])\subset B(p,4\delta)$.

\medskip

Now, for all $z\in \gamma([0,T])$ we consider a chart
 $\varphi_z:U_z\rightarrow V_z$
such that
  \begin{enumerate}
  \item[(a)]$z\in U_z\subset B(p,4\delta)$,  $\varphi_z(U_z)=V_z$ and $V_z$ is an open convex subset of $X$,
  \item[(b)] $\varphi_z$ satisfies the Palais condition \eqref{palaisdef} for $1+\varepsilon$,  and
\item[(c)]$\varphi_z$  is $(1+\varepsilon)$-bi-Lipschitz in $U_z$ for the norm in $X$
denoted as
  \begin{equation*}
    \tresp{v}_{z}:=\|d\varphi_z^{-1}(\varphi_z(z))(v)\|_{z} \text{ for all } v\in X,
  \end{equation*}
    (see Definition \ref{defFinsler} and Lemma \ref{desigualdades:BiLipschitz}
     for more details).
    \end{enumerate}

 For every $t\in [0,T]$, we  select real  numbers
$0\le r_t< s_t\le T$ satisfying:  (1) $ r_0=0$  and $\gamma(0)=x\in
 \gamma([0, s_0])\subset U_{x}$, (2) $r_t<t<s_t$ and $\gamma(t)\in
 \gamma([ r_t, s_t])\subset U_{\gamma(t)}$ whenever $t\in (0,T)$,
 and (3) $ s_T= T$  and $\gamma(T)=y\in \gamma([r_T, T])\subset U_y$.

  By compactness of $[0,T]$, there exists a finite set of points
  $\{t_1,\ldots, t_n\}\subset [0,T]$ with $t_1=0$ and $t_n=T$ satisfying
\begin{equation}\label{dev1}
\gamma\left([0,T]\right)= \displaystyle\bigcup_{k=1}^n \gamma([ r_{t_k}, s_{t_k}]).
\end{equation}
Let us denote $z_k=\gamma(t_k)$, \,$\varphi_k:=\varphi_{z_k}$,\,  $U_k:=U_{z_k}$,
\,$V_k:=V_{z_k}$,
$r_k:=r_{t_k}$ and $s_k:=s_{t_k}$ for $k=1,\dots,n$. By reordering and splitting
 the intervals if needed, we may assume that $r_1=0$, $r_{k+1}=s_{k}$ for
 $k=1,\dots,n-1$ and $s_n=T$.

Consider the function $\Phi_k\colon V_k\subset X\to \mathbb{R}$ defined by
$\Phi_k=f\circ \varphi_k^{-1}$. By applying Corollary  \ref{chain2} we know that,
for all $a\in V_k$,
\begin{equation*}
D^-\Phi_k(a)=\{\Delta\circ d\varphi_k^{-1}(a)\,:\,
\Delta\in D^-f\left(\varphi_k^{-1}(a)\right)\}.
\end{equation*}
Now, for any $k=1,\dots,n$ we consider in $X$ the norm
\begin{equation*}
\tresp{v}_{k}:=\tresp{v}_{z_k}=\|d\varphi_k^{-1}(\varphi_k(z_k))(v)\|_{z_k} \quad
\text{ for all } v\in X.
 \end{equation*}
If  $z\in U_k$, for a continuous linear  operator $T: (T_z M, \|\cdot\|_z) \rightarrow
(X, \tresp{\cdot}_{k})$  we set the norm
\begin{equation*}
|||T|||_{z,k}:=\sup \{\tresp{T(v)}_{k}: \, \|v\|_{z}\le 1\}.
\end{equation*}
Moreover, if $T$ is an isomorphism we denote
\begin{equation*}
|||T^{-1}|||_{k,z}=\sup\{\|T^{-1}(v)\|_{z}: \, \tresp{v}_{k}\le 1\}.
\end{equation*}

From the Palais condition \eqref{palaisdef}, we obtain for all $z\in U_k$ and
 $v\in T_z M$,
\begin{equation*}
\tresp{d\varphi_k(z)(v)}_{k}=\| d\varphi_k^{-1}(\varphi_k(z_k) )(d\varphi_k(z)(v))
 \|_{z_k}
\le (1+\varepsilon) \| d\varphi_k^{-1}(\varphi_k(z) )(d\varphi_k(z)(v)) \|_{z}
=(1+\varepsilon)\|v\|_{z}
\end{equation*}
and
\begin{equation*}
\tresp{d\varphi_k(z)(v)}_{k}=\| d\varphi_k^{-1}(\varphi_k(z_k))(d\varphi_k(z)(v))
 \|_{z_k}
\ge (1+\varepsilon)^{-1} \| d\varphi_k^{-1}(\varphi_k(z) )(d\varphi_k(z)(v))
 \|_{z}=(1+\varepsilon)^{-1}\|v\|_{z}.
\end{equation*}
Therefore, for all $z\in U_k$,
\begin{equation*}
(1+\varepsilon)^{-1}\le |||d\varphi_k(z)|||_{z,k} \le (1+\varepsilon) \quad
\text{ and thus } \quad
(1+\varepsilon)^{-1}\le||| d\varphi_k^{-1}(\varphi_k(z)) |||_{k,z}\le
(1+\varepsilon).
\end{equation*}

Now,  for all $a\in V_k$,  with $z=\varphi_k^{-1}(a)\in U_k$ and
$\Delta\in D^-f(\varphi_k^{-1}(a))$,  we have
\begin{equation*}
\tresp{\Delta\circ d\varphi_k^{-1}(a)}_{k}^* \leq \|\Delta\|_{z}^*\,
|||d\varphi_k^{-1}(a)|||_{k,z}\leq K(1+\varepsilon).
 \end{equation*}
Therefore,   $\tresp{\Lambda}_k^*\leq K(1+\varepsilon)$ for all
$\Lambda\in D^-\Phi_k(a)$ and $a\in V_k$.

Let us define $x_k=\gamma(r_k)$ for all
 $k=1,\dots, n$ and $x_{n+1}=\gamma(s_n)$. The function $\Phi_k: V_k
 \to \mathbb R$ is lower semicontinuous at every point of the open convex set
 $V_k\subset X$. Let us apply Deville's mean value inequality for Banach spaces
  (\cite{D2}; see also \cite{D, DG}) to the function $\Phi_k$ defined in the open convex subset $V_k$ of  the Banach space
  $(X,\tresp{\cdot}_{k})$ to obtain that $\Phi_k$ is $K(1+\varepsilon)$-Lipschitz with respect to the norm $\tresp{\cdot}_{k}$ and then,
\begin{equation*}
|f(x_{k+1})-f(x_k)|=|\Phi_k(\varphi_k(x_{k+1}))-\Phi_k(\varphi_k(x_k))|
\leq K(1+\varepsilon) \tresp{\varphi_k(x_{k+1})-\varphi_k(x_{k})}_{k}
 \end{equation*}
for all $k=1,\ldots, n$. In addition, since $\varphi_k$ is 
$(1+\varepsilon)$-bi-Lipschitz in $U_k$ with the norm $\tresp{\cdot}_{k}$ (Lemma \ref{desigualdades:BiLipschitz}),
 we obtain
\begin{equation*}
|f(x_{k+1})-f(x_{k})|\leq K(1+\varepsilon)^2d(x_{k+1},x_{k})
\end{equation*}
for all $k=1,\ldots, n$. Consequently,
\begin{equation*}
|f(x)-f(y)|\leq \sum\limits_{k=1}^n|f(x_{k+1})-f(x_k)|\leq K(1+\varepsilon)^2
\sum\limits_{k=1}^n d(x_{k+1},x_k)\leq K(1+\varepsilon)^2 \ell(\gamma)\leq
K(1+\varepsilon)^2( d(x,y)+\varepsilon).
\end{equation*}
By letting $\varepsilon\to 0$, we get the inequality $$|f(x)-f(y)|\leq
Kd(x,y).$$ Finally, since $x,y\in B(p,\delta)$ are arbitrary, $f$ is
$K$-Lipschitz in $B(p,\delta)$.\end{proof}


By applying the same techniques of the above theorem,
 we can prove a global mean value inequality for Finsler manifolds.

\begin{Teo} {\bf (Deville's mean value inequality for Finsler manifolds).}
Let $M$ be a ${C}^1$ Finsler manifold modeled on a Banach space with a
${C}^1$ Lipschitz bump function. Let $f\colon M\to\mathbb{R}$ be a
lower semicontinuous function such that
$\|\xi\|_x\le K$  for every $\xi\in D^-f(x)$ and $x\in M$.
Then,  $f$ is $K$-Lipschitz.
\end{Teo}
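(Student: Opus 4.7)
The plan is to mimic the proof of the local Deville mean value inequality (Theorem \ref{devillemean}), with the simplification that the uniform bound $\|\xi\|_x \le K$ now holds at every point of $M$, so there is no need to constrain the path to any particular ball.

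Fix $x, y \in M$ and $\varepsilon > 0$. Since $M$ is connected, choose a continuous piecewise $C^1$ path $\gamma: [0,T] \to M$ with $\gamma(0) = x$, $\gamma(T) = y$, and $\ell(\gamma) < d(x,y) + \varepsilon$. Because the global subdifferential bound holds throughout $M$, we do not need the image of $\gamma$ to sit inside any prescribed ball; we can work freely along $\gamma([0,T])$.

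Next, for each $z \in \gamma([0,T])$, select a chart $\varphi_z : U_z \to V_z$ satisfying the three properties (a)-(c) listed in the proof of Theorem \ref{devillemean}: $\varphi_z(U_z) = V_z$ is open and convex in $X$, $\varphi_z$ satisfies the Palais condition \eqref{palaisdef} for the factor $1+\varepsilon$, and $\varphi_z$ is $(1+\varepsilon)$-bi-Lipschitz on $U_z$ for the norm $\tresp{\cdot}_z := \|d\varphi_z^{-1}(\varphi_z(z))(\cdot)\|_z$. By compactness of $[0,T]$, extract intervals $0 = r_1 < s_1 = r_2 < s_2 = \dots = r_n < s_n = T$ and points $z_k = \gamma(t_k)$ with $r_k \le t_k \le s_k$ such that $\gamma([r_k,s_k]) \subset U_{z_k}$, exactly as in the local proof.

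On each chart, set $\Phi_k = f \circ \varphi_k^{-1} : V_k \to \mathbb{R}$. Corollary \ref{chain2} together with the Palais estimates for $|||d\varphi_k^{-1}(a)|||_{k,z}$ give $\tresp{\Lambda}_k^* \le K(1+\varepsilon)$ for every $\Lambda \in D^-\Phi_k(a)$ and every $a \in V_k$. Apply Deville's mean value inequality for Banach spaces on the open convex set $V_k \subset (X, \tresp{\cdot}_k)$ to conclude that $\Phi_k$ is $K(1+\varepsilon)$-Lipschitz for $\tresp{\cdot}_k$, and then use the $(1+\varepsilon)$-bi-Lipschitz property of $\varphi_k$ (Lemma \ref{desigualdades:BiLipschitz}) to get, setting $x_k = \gamma(r_k)$ and $x_{n+1} = \gamma(s_n)$,
\begin{equation*}
|f(x_{k+1}) - f(x_k)| \le K(1+\varepsilon)^2\, d(x_{k+1}, x_k), \qquad k = 1, \dots, n.
\end{equation*}
Summing telescopically and using $\sum_k d(x_{k+1}, x_k) \le \ell(\gamma) < d(x,y) + \varepsilon$ yields
\begin{equation*}
|f(x) - f(y)| \le K(1+\varepsilon)^2 \bigl(d(x,y) + \varepsilon\bigr),
\end{equation*}
and letting $\varepsilon \to 0^+$ delivers the desired $K$-Lipschitz bound.

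There is essentially no new obstacle: the only non-routine ingredient, namely controlling the path via a finite chain of bi-Lipschitz charts and invoking the Banach-space Deville inequality, has already been carried out in the local version, and the global hypothesis precisely removes the need to keep $\gamma$ inside a fixed ball. The mildest care is required when splitting and reordering the intervals $[r_k, s_k]$ to form a partition of $[0,T]$, but this is a verbatim repetition of the local argument.
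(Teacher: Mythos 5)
Your proposal is correct and follows exactly the route the paper intends: the paper gives no separate proof of the global statement, remarking only that it follows "by applying the same techniques" as the local version, and your argument is precisely that adaptation, with the global subdifferential bound replacing the restriction of the path to a ball.
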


\section{The eikonal equation on Banach-Finsler manifolds}

Let $M$ be a {\bf complete} ${C}^1$ Finsler manifold modeled on a Banach space with a
${C}^1$ Lipschitz bump function and assume that $\Omega\subset M$ is a
non-empty bounded open subset with $\partial \Omega\not= \emptyset$. Let us consider the {\bf eikonal equation},
\begin{equation}\label{EEq}\tag{EEq}
\left\{\begin{array}{ll}\|du(x)\|_x=1,&\mbox{ for all }x\in\Omega,\\
u(x)=0,&\mbox{ for all }x\in\partial\Omega\end{array}\right.
\end{equation}
which is a well-known Hamilton-Jacobi equation. Our purpose throughout this
section is to prove that this equation has a unique viscosity solution.
Let us first see that \eqref{EEq} does not have a classical solution.

\begin{Pro} \label{noclassical}
\eqref{EEq} does not have a classical solution.
\end{Pro}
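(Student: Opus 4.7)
The plan is to argue by contradiction. Suppose $u$ is a classical solution, i.e., $u \in C^1(\Omega)$ is continuous on $\overline{\Omega}$ with $u \equiv 0$ on $\partial \Omega$ and $\|du(x)\|_x = 1$ for every $x \in \Omega$. The condition $\|du(x)\|_x = 1$ prevents $u$ from being identically zero on $\Omega$, so there exists some $x_0 \in \Omega$ with $u(x_0) \neq 0$; since $-u$ also satisfies \eqref{EEq}, we may assume without loss of generality that $u(x_0) > 0$.

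The idea is to locate a point of $\overline{\Omega}$ at which $u$ is ``nearly maximized'', because at such a point the gradient of $u$ would have to be small, contradicting $\|du(x)\|_x = 1$. Define $F \colon M \to (-\infty, +\infty]$ by $F(x) = -u(x)$ for $x \in \overline{\Omega}$ and $F(x) = +\infty$ otherwise; this function is proper and lower semicontinuous, and (assuming $u$ is bounded above on $\overline{\Omega}$) also bounded below. Fix $\varepsilon$ with $0 < \varepsilon < \min\{1,\, u(x_0)/3\}$ and invoke the smooth variational principle for $C^1$ Finsler manifolds established in \cite{JSSG} (applicable because $X$ admits a $C^1$ Lipschitz bump and $M$ has $C^1$ partitions of unity). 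This produces a $C^1$ smooth function $\phi \colon M \to \mathbb{R}$ with $\sup_{x \in M} |\phi(x)| < \varepsilon$ and $\sup_{x \in M} \|d\phi(x)\|_x < \varepsilon$ such that $F + \phi$ attains its infimum at some point $x^* \in \overline{\Omega}$.

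The argument then splits according to the location of $x^*$. If $x^* \in \Omega$, then $F + \phi = -u + \phi$ is $C^1$ in a neighborhood of $x^*$ and attains a local minimum there, so $-du(x^*) + d\phi(x^*) = 0$ and hence $\|du(x^*)\|_{x^*} = \|d\phi(x^*)\|_{x^*} < \varepsilon < 1$, contradicting $\|du(x^*)\|_{x^*} = 1$. If instead $x^* \in \partial \Omega$, then $F(x^*) = 0$, so $(F + \phi)(x^*) = \phi(x^*)$, and the infimum inequality evaluated at $x_0$ yields
\begin{equation*}
\phi(x^*) = (F+\phi)(x^*) \leq (F+\phi)(x_0) = -u(x_0) + \phi(x_0),
\end{equation*}
whence $u(x_0) \leq \phi(x_0) - \phi(x^*) \leq 2\varepsilon < u(x_0)$, again a contradiction.

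The main obstacle is justifying the boundedness of $u$ on $\overline{\Omega}$ in the infinite-dimensional setting, since a continuous function on a (non-compact) bounded set need not be bounded; in finite dimensions, $\overline{\Omega}$ is compact and the argument reduces to the elementary fact that a $C^1$ function with $\|du\|_x = 1$ cannot attain its maximum at an interior critical point, so no variational principle is needed. To overcome this difficulty, one can localize: by continuity there is an open neighborhood $W$ of $\partial \Omega$ on which $u < u(x_0)/2$, and one can work on the closed set $\overline{\Omega} \setminus W$, whose boundedness combined with the local Deville mean value inequality (Theorem \ref{devillemean}) provides enough Lipschitz control on $u$ for a suitably modified $F$ to be lower semicontinuous and bounded below, while still preserving both cases of the dichotomy above.
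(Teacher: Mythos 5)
Your endgame is a legitimate variant of the paper's: where the paper applies the Ekeland variational principle to the (already shown to be bounded) function $u$ and then estimates $du(\overline{x})(v)$ along arc-length-parametrized paths, you apply the smooth variational principle of \cite{JSSG} to $F=-u$ and read off $du(x^*)=d\phi(x^*)$ directly at an interior minimizer, excluding the boundary case by the smallness of $\phi$. Either device works once one knows that $u$ is bounded above on $\overline{\Omega}$, and you correctly identify that boundedness as the main obstacle.

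That is, however, exactly where your proposal has a genuine gap, and it is the point to which the paper devotes most of its proof. Your proposed localization does not repair it: the set $\overline{\Omega}\setminus W=\{x\in\overline{\Omega}:u(x)\ge u(x_0)/2\}$ is precisely the set on which $u$ is large, so restricting $F=-u$ to it leaves the bounded-below hypothesis of the variational principle just as unverified as before; no ``suitably modified $F$'' that still detects a near-maximum of $u$ can be bounded below without first bounding $u$ above. Moreover, Theorem \ref{devillemean} only gives that $u$ is $1$-Lipschitz on $B(p,\delta)$ when $B(p,4\delta)\subset\Omega$, and in infinite dimensions $\dist\bigl(\overline{\Omega}\setminus W,\partial\Omega\bigr)$ may well be zero, so there is no uniform $\delta$; converting these local estimates into a global bound requires chaining them along piecewise $C^1$ paths, which may approach or cross $\partial\Omega$. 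The paper's proof does exactly this work: it extends $u$ by $0$ outside $\Omega$ and proves $|u(x)-u(y)|\le\ell(\gamma)$ for an arbitrary path $\gamma$ by treating separately the cases where $\gamma$ stays in $\Omega$, where it reaches $\partial\Omega$ (via a limiting argument along the restrictions $\gamma|_{[a,t]}$ as $t\to b'$), and where an endpoint lies outside $\Omega$; only then does boundedness of $u$ follow from boundedness of $\Omega$. You need to supply this (or an equivalent) argument before either variational principle can be launched.
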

\begin{proof}
Assume that there exists a classical solution  $u: \overline{\Omega}\to
\mathbb{R}$ of \eqref{EEq}, i.e. $u$ is continuous in $\overline{\Omega}$,
(Fr\'echet) differentiable in $\Omega$ and $||du(x)||_x=1 $ for all $x\in \Omega$.
 We extend $u$ to $M$ as $u(z)=0$ for $z\in \Omega^c$.
 By applying Theorem \ref{devillemean}, let us check that  $u$ is $1$-Lipschitz:

\noindent (i) For $x,y \in \Omega$, consider a piecewise $C^1$  smooth path
$\gamma:[a,b]\rightarrow M$ with $\gamma(a)=x$
and $\gamma(b)=y$. 
\begin{enumerate}
\item[(a)] If  $\gamma([a,b])\subset \Omega$, we apply
the compactness of $\gamma([a,b])$ to find
a finite number of balls $\{B(x_i,4\delta_i):\, i=1\dots,m\}$
such that
\begin{equation*}
\bigcup_{i=1}^m B(x_i,4\delta_i)\subset \Omega \ \text{ and } \
\gamma([a,b])\subset \bigcup_{i=1}^m B(x_i,\delta_i).
\end{equation*}
We may assume that there are auxiliary points $a=t_1<t_2<\cdots t_{n+1}=b$
 such that for every $k\in \{ 1,\dots, n\}$ there is  $i\in \{1,\dots,m\}$
 satisfying $\gamma([t_{k},t_{k+1}])\subset B(x_{i},\delta_{i})$.
 By applying Theorem \ref{devillemean}, we deduce that
\begin{equation*} \label{uLip}
|u(x)-u(y)|\le \sum_{k=1}^{n}|u(\gamma(t_{k+1}))-u(\gamma(t_k))|\le
 \sum_{k=1}^{n}d(\gamma(t_{k+1}),\gamma(t_k))\le  \ell (\gamma).
\end{equation*}
\item[(b)] If there is $b'\in (a,b]$ such that $\gamma([a,b'))\subset \Omega$ and $\gamma(b')\in \partial \Omega$, by taking the restrictions $\gamma|_{[a,t]}$ with $t<b'$ and the limit $t\to {b'}$, we obtain 
\begin{equation*}
|u(\gamma(b'))-u(\gamma(a))|\le \ell(\gamma|_{[a,b']}).
\end{equation*}

\noindent Thus, if $\gamma([a,b])\not\subset \Omega$, consider the points
$a<a'\le b'<b$ such that $\gamma([a,a'))\subset \Omega$,
$\gamma(a')\in \partial \Omega$,  $\gamma((b',b])\subset \Omega$ and
$\gamma(b')\in \partial \Omega$.
Then, by the preceding observation
\begin{equation*}|u(\gamma(a))|=|u(\gamma(a))-u(\gamma(a'))|\le
 d(\gamma(a),\gamma(a'))\le \ell(\gamma|_{[a,a']})
 \end{equation*}
 and
\begin{equation*} |u(\gamma(b))|=|u(\gamma(b'))-u(\gamma(b))|\le
 d(\gamma(b'),\gamma(b))\le
\ell(\gamma|_{[b',b]}).
 \end{equation*}
 Therefore, $|u(\gamma(a))-u(\gamma(b))|\le \ell(\gamma|_{[a,a']})
 +\ell(\gamma|_{[b',b]})\le \ell (\gamma)$.

\end{enumerate}

Thus, by taking the infimum of the lengths of all piecewise $C^1$ smooth
 paths $\gamma$ connecting $x$ and $y$, we obtain
$|u(x)-u(y)|\le d(x,y)$.

\noindent (ii) For $x\in \Omega$, $y\in \Omega^c$ and any   piecewise $C^1$ smooth path
$\gamma:[a,b]\rightarrow M$ with $\gamma(a)=x$
and $\gamma(b)=y$, consider the  point $a'$ in the segment $[a,b]$ such that
$\gamma([a,a'))\subset \Omega$
and $\gamma(a')\in \partial \Omega$. By the preceding  cases,
\begin{equation*}
|u(x)-u(y)|=|u(\gamma(a))|=|u(\gamma(a))-u(\gamma(a'))|\le d(\gamma(a),\gamma(a'))
 \le \ell(\gamma|_{[a,a']})\le  \ell(\gamma).
\end{equation*}
Again, by taking the infimum of the lengths of all piecewise $C^1$ smooth paths $\gamma$
connecting $x$ and $y$, we obtain
$|u(x)-u(y)|\le d(x,y)$.

\noindent (iii) For $x,y\in \Omega^c$ the inequality is clear.

\medskip

Since $\Omega$ is a bounded subset and $u$ is Lipschitz, $u$ is bounded on
$\overline{\Omega}$.
Notice that $-u$ is also a  classical solution of \eqref{EEq}, and thus we may assume that
$s=\sup\{ u(x):\, x\in\Omega\}>0$. Let us fix $0<\varepsilon<\min\{1,s\}$ and apply
 the Ekeland variational principle to $u:M\to \mathbb R$ (recall that we are assuming the completeness of $M$)  to find a point $\overline{x}\in
 M$ such that $s\leq u(\overline{x})+\varepsilon$ and
  $u(x)\leq u(\overline{x})+\varepsilon d(x,\overline{x})$ for all
  $x\in M$. Necessarily, $\overline{x}\in \Omega$
  (otherwise, $s\leq \varepsilon$, which is a contradiction).

Now, for each $v\in T_{\overline{x}}M$ with $||v||_{\overline{x}}=1$,
let us consider a piecewise ${C}^1$ smooth path $\gamma_v\colon [0,T]\to M$,
parametrized by the arc length, such that $\gamma_v(0)=\overline{x}$ and
$\gamma_v'(0)=v$.
Since $d(\gamma_v(0),\gamma_v(t))\leq \ell\left(\gamma_v|_{[0,t]}\right)=t$
for all $t\in [0,T]$, we have that $u(\gamma_v(t))-u(\gamma_v(0))\leq \varepsilon
d(\gamma_v(t),\gamma_v(0))\leq \varepsilon t$ for all $t\in [0,T]$. Therefore,
\begin{equation*}
du(\overline{x})(v)=\lim\limits_{t\to 0^+}\dfrac{u(\gamma_v(t))-u(\gamma_v(0))}{t}
\leq \varepsilon
\end{equation*}
and consequently, $\|du(\overline{x})\|_{\overline{x}}\leq\varepsilon<1$.
 This contradicts that $u$ is a classical solution of \eqref{EEq}.
\end{proof}

Let us consider the more general Hamilton-Jacobi equation
\begin{equation}\label{EEq2}\tag{EEq2}
\left\{\begin{array}{ll}\|du(x)\|_x=1,&\mbox{ for all }x\in\Omega,\\
u(x)=h(x),&\mbox{ for all }x\in\partial\Omega\end{array}\right.
\end{equation}
where $\Omega\subset M$ is a
non-empty bounded open subset with $\partial \Omega\not= \emptyset$ and $h:\partial \Omega \to \mathbb R$ is $1$-Lipschitz.
The definition of viscosity solution of the Hamilton-Jacobi equation \eqref{EEq2} on a Finsler
manifold is the following.
\begin{Def}
Let us consider a function $u\colon \overline{\Omega}\to\mathbb{R}$.

\begin{enumerate}
\item[(1)] $u$ is a \textbf{viscosity subsolution of
 \eqref{EEq2}} whenever $u$ is upper semicontinuous, $\|\Lambda\|_x\le 1$
 for all $\Lambda\in D^+u(x)$ with $x\in \Omega$ and $u\le h$ on $\partial\Omega$.
\item[(2)] $u$ is a \textbf{viscosity supersolution of \eqref{EEq2}}
 whenever $u$ is lower semicontinuous, $\|\Delta\|_x\ge 1$, for all $\Delta\in
 D^-u(x)$ with $x\in \Omega$ and $u\ge h$ on $\partial\Omega$.
\item[(3)]  $u$ is a \textbf{viscosity solution of \eqref{EEq2}}
 if $u$ is simultaneously a viscosity subsolution and a viscosity supersolution
 of \eqref{EEq2}, i. e.
$u$ is a continuous function and verifies
\begin{itemize}
	\item[(i)] $\|\Delta\|_x\geq 1$ for all $\Delta\in D^-u(x)$ with $x\in \Omega$,
	\item[(ii)] $\|\Lambda\|_x\leq 1$ for all $\Lambda\in D^+u(x)$ with $x\in \Omega$, and
	\item[(iii)] $u(x)=h(x)$ for all $x\in \partial \Omega$.
\end{itemize}
\end{enumerate}
\end{Def}

The next theorem shows that the equation \eqref{EEq2} has a unique viscosity solution.

\begin{Teo}\label{uniqeik}
 The function $u\colon \overline{\Omega}\to\mathbb{R}$,
 defined by $u(x)=\inf\{h(y)+d(y,x):\, y\in\partial\Omega\}$ is the
 unique viscosity solution of \eqref{EEq2}.
\end{Teo}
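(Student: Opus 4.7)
\textbf{Part I: Verifying $u$ is a viscosity solution of \eqref{EEq2}.}
As an infimum of the $1$-Lipschitz maps $x\mapsto h(y)+d(y,x)$ indexed by $y\in\partial\Omega$, the function $u$ is $1$-Lipschitz on $\overline{\Omega}$ and bounded there (since $\Omega$ is bounded and $h$ is Lipschitz). The identity $u=h$ on $\partial\Omega$ follows from the $1$-Lipschitz hypothesis on $h$: for $x\in\partial\Omega$ the choice $y=x$ yields $u(x)\le h(x)$, while $h(y)+d(y,x)\ge h(x)$ for every $y\in\partial\Omega$ gives the reverse. The subsolution condition $\|\Lambda\|_x\le 1$ for $\Lambda\in D^+u(x)$ is immediate from the Lipschitz constant of $u$. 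For the supersolution property, the plan is to exploit the dynamic programming identity
\[
u(x)=\inf\{u(z)+d(z,x)\,:\,z\in\Omega,\ d(z,x)=r\},
\]
valid for $x\in\Omega$ and $r>0$ small with $\overline{B}(x,r)\subset\Omega$. The ``$\ge$'' direction is the triangle inequality applied inside the infimum defining $u$; the ``$\le$'' direction comes from the fact that any near-optimal piecewise $C^1$ curve from $x$ to a near-minimizing boundary point must cross $\partial B(x,r)$. Given $\Delta\in D^-u(x)$ with a $C^1$ test function $g$ satisfying $dg(x)=\Delta$ and $u-g$ locally minimized at $x$, I pick a near-minimizer $z_r$ so that $g(z_r)-g(x)\le u(z_r)-u(x)\le -r+\varepsilon r$. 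Working in a chart $\varphi$ satisfying the Palais condition \eqref{palaisdef}, the Fr\'echet differentiability of $g\circ\varphi^{-1}$ at $\varphi(x)$ together with Lemma~\ref{desigualdades:BiLipschitz} yield that $\Delta\circ d\varphi(x)^{-1}$ evaluated at the chart-unit vector $-(\varphi(z_r)-\varphi(x))/\|\varphi(z_r)-\varphi(x)\|$ is bounded below by $1-O(\varepsilon)$; letting $r,\varepsilon\to 0^+$ while refining the chart so that the Palais constant tends to $1$ gives $\|\Delta\|_x\ge 1$.

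\textbf{Part II: Uniqueness.}
Let $v$ be another viscosity solution. The inequality $v\le u$ is the easy direction: $\|\Lambda\|_x\le 1$ for $\Lambda\in D^+v(x)$, and applying Theorem~\ref{devillemean} to $-v$ (using $D^-(-v)=-D^+v$) shows that $v$ is $1$-Lipschitz on $\overline{\Omega}$. Hence $v(x)\le v(y)+d(x,y)=h(y)+d(y,x)$ for every $y\in\partial\Omega$, and taking the infimum yields $v(x)\le u(x)$. For the reverse inequality $v\ge u$, the plan is to establish the analogous dynamic programming identity
\[
v(x)=\inf\{v(z)+d(z,x)\,:\,z\in\Omega,\ d(z,x)=r\}
\]
for $x\in\Omega$ and small $r$ with $\overline{B}(x,r)\subset\Omega$ (the ``$\le$'' direction is again the $1$-Lipschitz bound, while the ``$\ge$'' direction exploits the supersolution property of $v$ via the fuzzy sum rule of Proposition~\ref{fuzzy} applied to $z\mapsto v(z)+d(z,x)$). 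Iterating this identity produces a sequence $\{x_k\}\subset\Omega$ with $v(x_{k+1})+d(x_k,x_{k+1})\le v(x_k)+\varepsilon_k$ for a summable sequence $\{\varepsilon_k\}$ whose total is as small as desired; choosing step sizes $d(x_k,x_{k+1})$ comparable to $d(x_k,\partial\Omega)/2$ and using the completeness of $M$, the partial sums $\sum d(x_j,x_{j+1})$ are bounded by $v(x_0)-\inf v+\sum\varepsilon_j$, so $\{x_k\}$ is Cauchy and converges to some $x_\infty\in\overline{\Omega}$, and the choice of step size forces $x_\infty\in\partial\Omega$. Telescoping then yields $v(x_0)\ge h(x_\infty)+d(x_0,x_\infty)-\sum\varepsilon_k\ge u(x_0)-\sum\varepsilon_k$, contradicting the assumption $v(x_0)<u(x_0)$ once $\sum\varepsilon_k$ is sufficiently small.

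\textbf{Main obstacle.}
The core difficulty is the ``$\ge$'' direction of the dynamic programming identity for the supersolution $v$. Unlike the Hamilton--Jacobi equations treated in Sections~4 and~5, the eikonal Hamiltonian $\|p\|_x-1$ has no strictly monotone dependence on $u$, so the standard comparison argument via doubling of variables and the fuzzy sum rule gives only $\|\Delta_1\|_{x_1}\approx\|\Delta_2\|_{x_2}$, with both values constrained between the subsolution bound $\le 1$ and the supersolution bound $\ge 1$---no strict contradiction. The resolution exploits the specific asymmetric structure of the eikonal: one applies the variational principle or Ekeland not to $v$ itself but to the auxiliary function $z\mapsto v(z)+d(z,x)$, and uses the fuzzy sum rule together with the Palais condition and Lemma~\ref{desigualdades:BiLipschitz} to extract a quantitative descent direction for $v$ at a carefully chosen point. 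This parallels the strategy of Caffarelli--Crandall \cite{CC}, Siconolfi \cite{S} and Angulo--Guijarro \cite{AG} in the Euclidean and Riemannian settings, transported to the Banach--Finsler manifold via the $C^1$ Lipschitz bump hypothesis on $X$.
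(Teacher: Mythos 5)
Your Part I and your easy inequality $v\le u$ are sound: the verification that $u$ is a solution is essentially the paper's own argument (near-minimizing boundary point, a short arc-length step of size $r$ along a near-geodesic, and the test function $g$), and deducing $v\le u$ directly from the $1$-Lipschitz property of $v$ (via Theorem~\ref{devillemean} plus the path-chaining of Proposition~\ref{noclassical} to pass from local to global Lipschitzness on $\overline{\Omega}$) is actually a clean shortcut that the paper does not use, since the paper proves both inequalities by the same comparison scheme.

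The genuine gap is in the hard direction $u\le v$. You correctly diagnose the obstacle: for the eikonal Hamiltonian the fuzzy sum rule applied to $v+d(\cdot,x)$ (coefficient exactly $1$ on the distance term) only yields a subgradient $\Delta_1\in D^-v(z_1)$ with $1\le\|\Delta_1\|_{z_1}\le 1+O(\varepsilon)$, which is no contradiction. But your proposed ``resolution'' --- apply Ekeland and the fuzzy rule to the auxiliary function $z\mapsto v(z)+d(z,x)$ --- is precisely that coefficient-$1$ argument; as written it runs into the wall you yourself describe, and the phrase ``extract a quantitative descent direction at a carefully chosen point'' is not an argument. What is missing is the strictness mechanism. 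The paper supplies it by comparing $\theta u$ with $v$ for $\theta\in(0,1)$ (after adding a constant so that $h\ge 0$, which forces the Ekeland point of $\theta u-v$ into $\Omega$): then $\|\Lambda_1\|_{x_1}\le 1$ and $\|\Lambda_2\|_{x_2}\ge1$ combine with the fuzzy rule to give $\varepsilon(\varepsilon+2)\ge\bigl((1+\varepsilon)^{-2}-\theta\bigr)(1+\varepsilon)^{-1}$, a contradiction as $\varepsilon\to0$. In your framework the analogous fix is to prove the decrease inequality $v(x)\ge\inf\{v(z):d(z,x)=r\}+\lambda r$ for every $\lambda<1$: if it failed, $w_\lambda=v+\lambda d(\cdot,x)$ would have $w_\lambda(x)$ strictly below its infimum over $\{d(\cdot,x)=r\}$, Ekeland with a sufficiently small perturbation parameter would produce an approximate minimum in the open ball, and the fuzzy rule would yield $\Delta_1\in D^-v(z_1)$ with $\|\Delta_1\|_{z_1}\le\lambda+o(1)<1$, contradicting the supersolution property; letting $\lambda\to1^-$ then gives the identity your iteration needs. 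Without the sub-unit multiplier (in either form), the proof does not close. The remaining pieces of your iteration (Cauchy sequence, $x_\infty\in\partial\Omega$ forced by the step-size choice, telescoping) are fine once this lemma is in place.
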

\begin{proof}
Since $h$ is $1$-Lipschitz, $h(x)-h(y)\le d(x,y)$  for every $x,y\in \partial \Omega$,  and then $h(x)\le h(y)+ d(x,y)$. By taking
 the infimum over all $y\in \partial\Omega$ we have $h(x)\le \inf\{h(y)+ d(x,y):\, y\in \partial\Omega\} \le h(x)+d(x,x) = h(x)$.
Thus $u(x)=h(x)$ for $x\in \partial\Omega$.

Now, let us check the conditions 
 (i) and (ii) given in the definition of viscosity solution.
We can consider  $u$ defined in $M$ with the same expression $u(x)=\inf\{h(y)+d(y,x):\, y\in\partial\Omega\}$ for $x\in \Omega^c$.
Let us first check (i). Consider  $\Delta\in D^-u(x)$ with $x\in\Omega$
and fix $\varepsilon>0$. Then, for every $\delta>0$, there exists $x_\delta\in \partial \Omega$
 such that
\begin{equation*}\label{e1}h(x_\delta)+d(x_\delta,x)\leq u(x)
+\dfrac{\delta\varepsilon}{2}.
\end{equation*}
Let us point out that, in the Finsler distance, it is possible to approximate $d(z,w)$ for $z,w\in M$ by the length of a $C^1$ smooth path connecting $z$ and $w$ and parametrized by the arc length. Let us give an outline of this fact: For a piecewise $C^1$ smooth path
$\rho:[a,b]\to M$ connecting $z$ and $w$ whose length approximates $d(z,w)$ and for  any $r>0$, we can find a finite collection of points 
$a=t_1<\cdots<t_{n+1}=b$ and a finite family of $(1+r)$-bi-Lipschitz charts 
$\{(A_i,\psi_i)\}_{i=1}^n$ given by  Lemma \ref{desigualdades:BiLipschitz}
such that $\rho([t_i,t_{i+1}])\subset A_i$ and $\psi_i(A_i)$ is open and convex in $X$. Now, we proceed in 
$X$ to construct a $C^1$ smooth path $\sigma_i:[t_i,t_{i+1}]\to X$  connecting $\psi_i(\rho(t_i))$ and $\psi_i(\rho(t_{i+1}))$ such that the length of $\sigma_i$ for the norm $|||u|||_{\rho(t_i)}:=||d\psi_i^{-1}(\psi_i(\rho(t_i)))(u)||_{\rho(t_i)}$ approximates
$|||\psi_i(\rho(t_i))-\psi_i(\rho(t_{i+1}))|||_{\rho(t_i)}$, $\sigma_i([t_i,t_{i+1}])\subset \psi_i(A_i)$,  $\sigma_i'(t)\not=0$ for every $t\in [t_i,t_{i+1}]$ and $i=1,\dots,n$ and $(\psi_i^{-1}\circ\sigma_i)'(t_{i+1})=(\psi_{i+1}^{-1}\circ\sigma_{i+1})'(t_{i+1})$
for every $i=1,\dots,n-1$. In this way, the length of $\psi_i^{-1}\circ\sigma_i:[t_i,
t_{i+1}]\to M$ approximates $d(\rho(t_i),\rho(t_{i+1}))$. Now the path given by the union 
$\sigma:=\cup_{i=1}^n (\psi_i^{-1}\circ\sigma_i):[a,b]\to M$ is a $C^1$ smooth path connecting $z$ and $w$, $\sigma'(t)\not=0$ for every $t\in [a,b]$ and $\ell(\sigma)$
approximates the distance $d(z,w)$ for $r>0$ small enough. Now, we can reparametrize $\sigma$ by the arc length to obtain the required $C^1$ smooth path.

Thus, we may assume that there are $C^1$ smooth paths  $\gamma_\delta\colon [0,T_\delta]\to M$
 parametrized by the arc length with
$||\gamma_{\delta}'(t)||_{\gamma_{\delta}(t)}=1$ for all $t\in [0,T_\delta]$ connecting $x$ and
$x_\delta$ (i.e., $\gamma_\delta(0)=x$ and $\gamma_\delta(T_\delta)=x_\delta$)
 and verifying
 \begin{equation*}
 \label{length}
 \ell(\gamma_\delta)=T_\delta \leq d(x,x_\delta)
 +\dfrac{\delta\varepsilon}{2}.
 \end{equation*}
Notice that $\delta<T_\delta$ whenever $\delta< d(x,\partial\Omega)$. So, let us define $z_\delta:=\gamma_\delta(\delta)\in M$ for $\delta < d(x,\partial\Omega)$. Then $d(x,z_\delta)\le \ell(\gamma_\delta|_{[0,\delta]})= \delta$ and thus
$\lim_{\delta \to 0 } d(x,z_{\delta})=0$.
 Since $\Delta\in D^-u(x)$, there exists a $C^1$ smooth function $g:M \to\mathbb{R}$
 such that $u-g$ attains a local minimum at $x$ and $\Delta = dg(x)$. Therefore $u(x)-g(x)\leq u(y)-g(y)$,
 for all $y$ in a neighbourhood of $x$. Thus, $u(x)-g(x)\leq u(z_\delta)-g(z_\delta)$
  for $\delta>0$ small enough. This yields
\begin{align*}
g(z_\delta)-g(x)&\leq u(z_\delta)-u(x)=\inf\{h(y)+d(y,z_\delta):\, y\in \partial \Omega\}-\inf\{h(y)+d(y,x):\, y\in \partial \Omega\} \\
&\le \inf\{h(y)+d(y,z_\delta):\, y\in \partial \Omega\}-h(x_\delta)-d(x_\delta,x)+\frac{\delta\varepsilon}{2}
\\&\leq h(x_\delta)+d(x_\delta,z_\delta)-h(x_\delta)-d(x_\delta,x)+\frac{\delta\varepsilon}{2}
\\&\leq \ell\left(\gamma_\delta\big{|}_{[\delta,T_\delta]}\right)
-\ell(\gamma_\delta)+\delta\varepsilon = -
 \ell\left(\gamma_\delta\big{|}_{[0,\delta]}\right)+\delta\varepsilon=
 -\delta + \delta\varepsilon=
\delta(\varepsilon-1).
\end{align*}
This implies
\begin{equation*}
\dfrac{g(z_\delta)-g(x)}{\delta}=\dfrac{g\circ \gamma_\delta(\delta)
-g\circ\gamma_\delta(0)}{\delta}\leq \varepsilon-1.
\end{equation*}
Since $g\circ \gamma_\delta$ is $C^1$ smooth, by the mean value theorem  there is
$\tau_\delta\in [0,\delta]$ such that
\begin{equation*}
\frac{\left|g\circ \gamma_\delta(\delta)-g\circ\gamma_\delta(0)\right|}{\delta}=
\left|(g\circ \gamma_\delta)'(\tau_\delta) \right|
\le ||dg(\gamma_\delta(\tau_\delta))||_{\gamma_\delta(\tau_\delta)}
 ||\gamma_\delta'(\tau_\delta)||_{\gamma_\delta(\tau_\delta)}
=||dg(\gamma_\delta(\tau_\delta))||_{\gamma_\delta(\tau_\delta)}.
\end{equation*}
Clearly, $d(x,\gamma_\delta(\tau_\delta))\le \ell(\gamma_\delta|_{[0,\tau_\delta]})
=\tau_\delta \le \delta$ and thus $\lim_{\delta \to 0 } d(x,\gamma_\delta(\tau_\delta))=0$.
Since the function $z \to ||dg(z)||_{z}$ is continuous, $\lim_{\delta\to 0}
 ||dg(\gamma_\delta(\tau_\delta))||_{\gamma_\delta(\tau_\delta)}=||dg(x)||_{x}$.
Thus $||dg(x)||_{x}\ge 1-\varepsilon.$
This inequality holds for every $\varepsilon>0$, and consequently
$\|\Delta\|_x\geq 1.$

\medskip

Now, let us show  (ii). Take $\Lambda\in D^+u(x)$, $x\in \Omega$.
There exists a $C^1$ smooth function $g:M \to\mathbb{R}$ such that $u-g$ attains a
local maximum at $x$ and $\Lambda=dg(x) $.
Therefore $u(y)-g(y)\leq u(x)-g(x)$, for all $y$ in a neighborhood of $x$.
For each $v\in T_xM$ with $\|v\|_x=1$, choose a (piecewise) $C^1$ smooth path parametrized
by the arc length $\gamma_v\colon [0,T]\to M$ such that $\gamma_v(0)=x$ and
$\gamma_v'(0)=v$. Then
\begin{equation*}
d(\gamma_v(0),\gamma_v(t))\leq \ell\left(\gamma_v|_{[0,t]}\right)
=t,\mbox{ for all }t\in [0,T].
\end{equation*}
It can be easily checked that  $u(x)=\inf\{h(y)+d(y,x):\, y\in \partial \Omega\}$ is 1-Lipschitz in $M$,
and thus for $t>0$ small enough
\begin{equation*}
g(\gamma_v(t))-g(\gamma_v(0))\geq u(\gamma_v(t))
-u(\gamma_v(0))\geq -d(\gamma_v(t),\gamma_v(0))\geq -t,
\end{equation*}
and
\begin{equation*}
dg(x)(v)=\lim\limits_{t\to 0^+}\dfrac{g(\gamma_v(t))-g(\gamma_v(0))}{t}\geq -1.
\end{equation*}
Therefore $dg(x)(-v)\leq 1$ and we can conclude that
$\|\Lambda\|_x=\|dg(x)\|_x\leq 1$.\\

\begin{Rem}
Following the above argument, we can prove the next statement: Let $M$ be a ${C}^1$ Finsler manifold modeled on a Banach space with a
${C}^1$ Lipschitz bump function. Assume that $\Omega\subset M$ is a non-empty
open subset and consider a function $f\colon \Omega\to \mathbb{R}$. If $f$ is
pointwise $K$-Lipschitz at  $x\in \Omega$, that is $|f(x)-f(y)|\le K d(x,y)$ for all $y $ in a neighborhood of $x$, then $||\Delta||_x\le K$ for every $\Delta \in D^+f(x)\cup D^-f(x)$.
\end{Rem}

Finally, we will check the uniqueness of the viscosity solution.
Suppose that there exist
two viscosity solutions $u,v\colon \overline{\Omega}\to\mathbb{R}$.
In particular, their superdifferentials at every point
 $x\in \Omega$ are $\|\cdot\|_x$-bounded above by $1$. Thus, $-u$ and $-v$ have
  subdifferentials $\|\cdot\|_x$-bounded above by $1$ in $\Omega$.
By applying  Theorem \ref{devillemean} we can deduce that $-u$ and $-v$ are locally
 $1$-Lipschitz in $\Omega$. We consider $u$ and $v$ defined  in
 $M\setminus \Omega$ as $u(x)=v(x)=\inf \{h(y)+d(y,x):\, y\in \partial \Omega\}$.
 Thus, $u,v:M\rightarrow \mathbb R$ are continuous, locally
 $1$-Lipschitz in $\Omega$, and $1$-Lipschitz in $M\setminus \Omega$.
 Following an analogous proof to the one given in Proposition \ref{noclassical},
 it can be  deduced that $u$ and $v$ are $1$-Lipschitz in $M$.

 Since $\Omega$ is bounded and $u$ and $v$ are $1$-Lipschitz, we know that $u$ and $v$
 are bounded in $\overline{\Omega}$.
 In fact, we may assume that the boundary data $h$ is non-negative in $\partial \Omega$.
  Otherwise, we consider $S>0$ large enough so that $\widetilde{h}=h+S$ is non-negative in
  $\partial \Omega$ and the Hamilton-Jacobi equation
  \begin{equation} \label{+M}
\begin{cases}\|d\widetilde{u}(x)\|_x=1,&\text{ for all }x\in\Omega,\\
\widetilde{u}(x)=\widetilde{h}(x),&\text{ for all }x\in\partial\Omega.\end{cases}
\end{equation}
Notice that a function $\widetilde{u}$ is a viscosity solution of \eqref{+M} if and only if $u=\widetilde{u}-S$ is
 a viscosity solution of \eqref{EEq2}.

Now, if we prove
 that $\theta u(x)\leq v(x)$ for all $x\in\overline{\Omega}$ and all
 $\theta\in (0,1)$, then we will have $u\leq v$. Analogously, it can be proved
  $v\leq u$, and thus $u=v$.

 Assume, by contradiction,
that $\sup\limits_{\overline{\Omega}} (\theta u-v)>0$ for some $\theta\in (0,1)$.
 We know that $\theta u-v$ is continuous and bounded.
 Hence, by applying the Ekeland variational principle to the function $\theta u-v:\overline{\Omega}
\to \mathbb R$ for
 $0<\varepsilon<\sup\limits_{\overline{\Omega}} (\theta u-v)$, we can find
 $\overline{x}\in \overline{\Omega}$ such that
\begin{equation*}
\sup\limits_{\overline{\Omega}} (\theta u-v)<(\theta u-v)(\overline{x})
+\varepsilon
 \end{equation*}
 and
 \begin{equation*}
 (\theta u-v)(x)\leq (\theta u -v)(\overline{x})+\varepsilon d(x,\overline{x}),
 \mbox{ for all }x\in \overline{\Omega}.
 \end{equation*}
Necessarily, $\overline{x}\in\Omega$, otherwise, $\sup\limits_{\overline{\Omega}}(\theta u-v)<
(\theta u - v)(\overline{x})+\varepsilon= (\theta-1) h(\overline{x})+\varepsilon\le \varepsilon$, which is a
contradiction. Since $(\theta u-v)(\cdot)-\varepsilon d(\cdot,\overline{x})$
 attains a local maximum at $\overline{x}$, we have $0\in D^+\left(\theta u(\cdot)
 -v(\cdot)-\varepsilon d(\cdot,\overline{x})\right)(\overline{x})$, which yields
 $0\in D^-(\varepsilon d(\cdot,\overline{x})+v(\cdot)-\theta u(\cdot))(\overline{x})$.


Let $(U,\varphi)$ be a chart with $\overline{x}\in U\subset \Omega$ satisfying the Palais condition
 for $1+\varepsilon$.
  Let us consider in
$X$ the norm $\tresp{v}_{\overline{x}}=\|d\varphi^{-1}(\varphi(\overline{x}))(v)
\|_{\overline{x}}$ for all $v\in X$. For a continuous linear  operator
 $T: (T_x M, \|\cdot\|_x) \rightarrow (X, \tresp{\cdot}_{\overline{x}})$,
  where $x\in U$, we consider  the norm
\begin{equation*}
 |||T|||_{x,\overline{x}}=\sup \{\tresp{T(v)}_{\overline{x}}: \, \|v\|_{x}\le 1\}.
 \end{equation*}
 Moreover, if $T$ is an isomorphism we consider the norm
 \begin{equation*}
|||T^{-1}|||_{\overline{x},x}=\sup\{\|T^{-1}(v)\|_{x}: \, \tresp{v}_{\overline{x}}\le 1\}.
  \end{equation*}
From the Palais condition, we obtain for all $x\in U$ and $v\in T_x M$,
\begin{equation*}
\tresp{d\varphi(x)(v)}_{\overline{x}}=\| d\varphi^{-1}(\varphi({\overline{x})} )(d\varphi(x)(v))
\|_{\overline{x}}
\le (1+\varepsilon) \| d\varphi^{-1}(\varphi(x) )(d\varphi(x)(v)) \|_{x}=(1+\varepsilon)\|v\|_x
\end{equation*}
and
\begin{equation*}
\tresp{d\varphi(x)(v)}_{\overline{x}}=\| d\varphi^{-1}(\varphi({\overline{x})} )(d\varphi(x)(v))
 \|_{\overline{x}}
\ge (1+\varepsilon)^{-1} \| d\varphi^{-1}(\varphi(x) )(d\varphi(x)(v)) \|_{x}=(1+\varepsilon)^{-1}\|v\|_x.
\end{equation*}
Therefore, for all $x\in U$,
\begin{equation*} (1+\varepsilon)^{-1}\le |||d\varphi(x)|||_{x,\overline{x}} \le (1+\varepsilon)
\text{ and thus }
(1+\varepsilon)^{-1}\le||| d\varphi^{-1}(\varphi(x)) |||_{\overline{x},x}\le (1+\varepsilon) .
\end{equation*}
For a continuous linear functional $L:(X, \tresp{\cdot}_{\overline{x}})\rightarrow \mathbb R$,
 we will consider the  norm
\begin{equation*}
\tresp{L}_{\overline{x}}=\sup\{|L(v)|: \, \tresp{v}_{\overline{x}}\le 1\}.
\end{equation*}
By applying   Proposition \ref{fuzzy} (the fuzzy rule for the subdifferential of the sum) to  the function $\varepsilon d(\cdot,\overline{x})+v(\cdot)
-\theta u(\cdot)$, we find points $x_1,x_2,x_3\in U\subset \Omega$, functionals $\Delta_1\in D^-(-\theta u)(x_1)$,
$\Delta_2\in D^- v(x_2)$, $\Delta_3\in D^-\left(\varepsilon d(\cdot,\overline{x})\right)(x_3)$ such that
\begin{equation}
\label{jota3}\tresp{\Delta_1\circ d\varphi(x_1)^{-1}+\Delta_2\circ d\varphi(x_2)^{-1}+\Delta_3\circ
d\varphi(x_3)^{-1}-0\circ d\varphi(\overline{x})^{-1}}_{\overline{x}}\leq\varepsilon.
\end{equation}
For convenience, we define
\begin{itemize}
	\item $\Lambda_1:=-\dfrac{1}{\theta}\Delta_1\in -\dfrac{1}{\theta}D^-(-\theta u)(x_1)=D^+u(x_1)$,
	\item $\Lambda_2:=\Delta_2\in D^- v(x_2)$,
	\item $\Lambda_3:=\dfrac{1}{\varepsilon}\Delta_3\in \dfrac{1}{\varepsilon}D^-\left(\varepsilon
d(\cdot,\overline{x})\right)(x_3)=D^-\left(d(\cdot,\overline{x})\right)(x_3)$.
\end{itemize}
Thus, we can rewrite \eqref{jota3} as $\tresp{\theta\Lambda_1\circ d\varphi(x_1)^{-1}-\Lambda_2\circ
d\varphi(x_2)^{-1}-\varepsilon\Lambda_3\circ d\varphi(x_3)^{-1}}_{\overline{x}}\leq\varepsilon$,
and then $\tresp{\theta\Lambda_1\circ d\varphi(x_1)^{-1}-\Lambda_2\circ
d\varphi(x_2)^{-1}}_{\overline{x}}-\varepsilon\tresp{\Lambda_3\circ d\varphi(x_3)^{-1}}_{\overline{x}}
\leq\varepsilon$. Since $d(\cdot, \overline{x})$ is $1$-Lipschitz, we have $\|\Lambda_3\|_{x_3}\leq 1$.
 Hence,
\begin{equation}\label{vipc}\tresp{\theta\Lambda_1\circ d\varphi(x_1)^{-1}
-\Lambda_2\circ d\varphi(x_2)^{-1}}_{\overline{x}}\leq \varepsilon+\varepsilon\|\Lambda_3\|_{x_3}
\tresp{d\varphi(x_3)^{-1}}_{\overline{x}, x_3}\leq \varepsilon(\varepsilon+2).
\end{equation}
In addition,  we have
\begin{align*}
\|\theta\Lambda_1-\Lambda_2\circ d\varphi(x_2)^{-1}\circ d\varphi(x_1)\|_{x_1}
=&\|[\theta\Lambda_1-\Lambda_2\circ d\varphi(x_2)^{-1}\circ d\varphi(x_1)]\circ
d\varphi(x_1)^{-1}\circ d\varphi(x_1)\|_{x_1}\\ \leq &\tresp{[\theta \Lambda_1
-\Lambda_2\circ d\varphi(x_2)^{-1}\circ d\varphi(x_1)]\circ d\varphi(x_1)^{-1}}_{\overline{x}}
|||d\varphi(x_1)|||_{x_1,\overline{x}} \\ \leq & \tresp{\theta \Lambda_1\circ d\varphi(x_1)^{-1}
-\Lambda_2\circ d\varphi(x_2)^{-1}}_{\overline{x}} (1+\varepsilon) ,\end{align*}
and
\begin{align*}
\|\Lambda_2\|_{x_2}&=\|\Lambda_2\circ [d\varphi(x_2)^{-1}\circ d\varphi(x_1)]\circ
[d\varphi(x_1)^{-1}\circ d\varphi(x_2)]\|_{x_2}
\\&\leq \|\Lambda_2\circ d\varphi(x_2)^{-1}\circ d\varphi(x_1)\|_{x_1}
\tresp{d\varphi(x_1)^{-1}}_{\overline{x},x_1}||| d\varphi(x_2)|||_{x_2,\overline{x}}
\\&\leq \|\Lambda_2\circ d\varphi(x_2)^{-1}\circ d\varphi(x_1)\|_{x_1} (1+\varepsilon)^2.
\end{align*}
Let us check that these inequalities  give us a contradiction.
Since $u$ and $v$ are viscosity solutions, we have $\|\Lambda_1\|_{x_1}\leq 1$ and
$\|\Lambda_2\|_{x_2}\geq 1$.
 Therefore, we can write
\begin{align*}
\tresp{\theta\Lambda_1\circ d\varphi(x_1)^{-1}-\Lambda_2\circ d\varphi(x_2)^{-1}}_{\overline{x}} &
\geq \|\theta \Lambda_1 - \Lambda_2\circ d\varphi (x_2)^{-1}\circ
 d\varphi (x_1)\|_{x_1}(1+\varepsilon)^{-1}
\\&\geq \left(\|\Lambda_2\circ d\varphi(x_2)^{-1}\circ d\varphi(x_1)\|_{x_1}
-\|\Lambda_1\|_{x_1}\theta\right)(1+\varepsilon)^{-1}
\\&\geq \left( \|\Lambda_2\|_{x_2}(1+\varepsilon)^{-2}-\|\Lambda_1\|_{x_1}
\theta\right)(1+\varepsilon)^{-1}
\\&\geq \left( (1+\varepsilon)^{-2}-\theta\right)(1+\varepsilon)^{-1}.
\end{align*}
 Finally,
\begin{equation}\label{vipd}\varepsilon(\varepsilon+2)\ge
\tresp{\theta\Lambda_1\circ d\varphi(x_1)^{-1}-\Lambda_2\circ
d\varphi(x_2)^{-1}}_{\overline{x}}\geq \left( (1+\varepsilon)^{-2}
-\theta\right)(1+\varepsilon)^{-1}.\end{equation}
By letting $\varepsilon\to 0$, we have a contradiction.
\end{proof}


\section{A class of Hamilton-Jacobi equations on Banach-Finsler manifolds}
Let $M$ be a {\bf complete}  and ${C}^1$ Finsler manifold  modeled on a Banach space
with a $C^1$ Lipschitz  bump function and $H\colon M\times \mathbb R\to \mathbb{R}$ be a  continuous function. Recall that we refer to the completeness of $M$ for the Finsler metric $d$.
Let us consider the Hamilton-Jacobi equation
\begin{equation}\label{E1}\tag{E1}
u(x)+H(x,\|du(x)\|_x)=0.
\end{equation}
The aim of this section is to study the existence and  uniqueness of the viscosity solutions
$u\colon M\to \mathbb{R}$ of \eqref{E1}, under certain assumptions.

\begin{Def}
Let us consider a function $u\colon M\to\mathbb{R}$.
\begin{enumerate}
\item[(1)]  The function $u$ is a \textbf{viscosity subsolution of \eqref{E1}} if $u$ is
upper semicontinuous and $u(x)+H(x,\|\Delta\|_x)\leq 0$ for every $x\in M$ and
$\Delta\in D^+u(x)$.
\item[(2)]  $u$ is a
\textbf{viscosity supersolution of \eqref{E1}} if
 $u$ is lower semicontinuous and $u(x)+H(x,\|\Delta\|_x)\geq 0$ for every
$x\in M$ and  $\Delta\in D^-u(x)$.
\item[(3)] $u$ is a \textbf{viscosity solution of \eqref{E1}}
if $u$ is simultaneously a viscosity subsolution and a viscosity supersolution of
\eqref{E1}.
\end{enumerate}
\end{Def}

Let us consider the analogous definition for Finsler manifolds of the
 condition (A) given in \cite[Theorem 3.2]{BZ}  for Banach spaces.

\begin{Def} The  Hamiltonian $H$ in \eqref{E1} satisfies condition (A) whenever there are a constant $C\ge 0$ and a continuous function $\omega: \mathbb R\times \mathbb R\to \mathbb R$  with $\omega(0,0)=0$ such that for any $x_1,x_2\in M$ and any $t_1,t_2\in \mathbb R$,
\begin{equation*}
|H(x_1,t_1)-H(x_2,t_2)|\le \omega(d(x_1,x_2),t_1-t_2)
+C\max\{|t_1|,|t_2|\}d(x_1,x_2).
\end{equation*}
\end{Def}

\begin{Rem} Let us recall that every uniformly continuous Hamiltonian
 $H$ in \eqref{E1} satisfies condition (A). In addition, 
condition (A) implies that $H$ is uniformly continuous in 
$M\times [-K,K]$ for every $K>0$.
\end{Rem}

Let us give now a generalization for Finsler manifolds of the results given in
\cite[Theorem 6.13]{AFLM}, \cite[Theorem 3.2]{BZ}, \cite[Proposition 3.3]{DEL} 
and \cite[Theorem 6.1]{DG}.

\begin{Teo} \label{estacionarias}
Let $M$ be a complete  ${C}^1$ Finsler manifold modeled on a Banach space $X$ with
a $C^1$ Lipschitz bump function
 and let $H\colon M\times \mathbb R\to \mathbb{R}$ be the
Hamiltonian of \eqref{E1}. Assume that $H$ satisfies  condition (A).
If $u$ is a viscosity subsolution and $v$ is a viscosity supersolution of 
\eqref{E1}, both functions
are bounded and for every $x\in M$ either $u$ or $v$ is Lipschitz in a neighborhood of $x$, then
\begin{equation*}
\inf\limits_M (v-u)\geq 0. 
\end{equation*}
\end{Teo}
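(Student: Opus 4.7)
The plan is to argue by contradiction in the spirit of the uniqueness part of Theorem \ref{uniqeik}, combining Ekeland's variational principle with the fuzzy rule for the subdifferential of a sum. Suppose $\inf_M(v-u)<0$, so $S:=\sup_M(u-v)>0$. Boundedness of $u$ and $v$ lets us fix $\theta\in(0,1)$ so close to $1$ that $S_\theta:=\sup_M(\theta u-v)\geq S-(1-\theta)\|u\|_\infty$ is strictly positive. The function $v-\theta u$ is lower semicontinuous and bounded below on the complete Finsler manifold $M$, so Ekeland's principle yields, for each small $\varepsilon>0$, a point $\bar x=\bar x_\varepsilon\in M$ with $(\theta u-v)(\bar x)>S_\theta-\varepsilon$ such that $x\mapsto v(x)-\theta u(x)+\varepsilon d(x,\bar x)$ attains its minimum at $\bar x$; equivalently, $0\in D^-\bigl(v-\theta u+\varepsilon d(\cdot,\bar x)\bigr)(\bar x)$.

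By hypothesis one of $u,v$ is $L$-Lipschitz on some neighbourhood $W$ of $\bar x$; assume without loss of generality that it is $u$ (the other case is symmetric). Choose a chart $(U,\varphi)$ with $\bar x\in U\subset W$ satisfying the Palais condition for $1+\varepsilon$, and apply the fuzzy rule (Proposition \ref{fuzzy}) iteratively to the three summands $v$, $-\theta u$, $\varepsilon d(\cdot,\bar x)$, exactly as in the eikonal uniqueness proof. This produces $x_1,x_2,x_3\in U$ and subdifferentials $\Lambda_1\in D^+u(x_1)$, $\Lambda_2\in D^-v(x_2)$, $\Lambda_3\in D^- d(\cdot,\bar x)(x_3)$ satisfying $d(x_i,\bar x)<\varepsilon$, $|u(x_1)-u(\bar x)|<\varepsilon$, $|v(x_2)-v(\bar x)|<\varepsilon$, the fuzzy estimate
\[
\tresp{\theta\Lambda_1\circ d\varphi(x_1)^{-1}-\Lambda_2\circ d\varphi(x_2)^{-1}}_{\bar x}\leq\varepsilon(2+\varepsilon),
\]
and $d(x_1,x_2)\max\{\|\Lambda_1\|_{x_1},\|\Lambda_2\|_{x_2}\}=O(\varepsilon)$ from clause~(4) of Proposition \ref{fuzzy}. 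By the Remark following Theorem \ref{uniqeik}, $\|\Lambda_1\|_{x_1}\leq L$ and $\|\Lambda_3\|_{x_3}\leq 1$; pulling the fuzzy estimate back to $T_{x_1}M^*$ through $d\varphi$ as in the eikonal proof yields $\bigl|\,\|\Lambda_1\|_{x_1}-\|\Lambda_2\|_{x_2}\bigr|\leq(1-\theta)L+O(\varepsilon)$.

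Combining the sub- and supersolution inequalities at $x_1$ and $x_2$ with condition (A) gives
\[
v(x_2)-u(x_1)\geq H(x_1,\|\Lambda_1\|_{x_1})-H(x_2,\|\Lambda_2\|_{x_2})\geq-\omega\bigl(d(x_1,x_2),\,\|\Lambda_1\|_{x_1}-\|\Lambda_2\|_{x_2}\bigr)-O(\varepsilon),
\]
while the Ekeland quasiminimality of $\bar x$ together with the Lipschitz control on $u$ gives $v(x_2)-u(x_1)\leq -S_\theta+(1-\theta)\|u\|_\infty+O(\varepsilon)$. Putting the two together,
\[
S_\theta\leq(1-\theta)\|u\|_\infty+\omega\bigl(d(x_1,x_2),\,\|\Lambda_1\|_{x_1}-\|\Lambda_2\|_{x_2}\bigr)+O(\varepsilon).
\]
The main obstacle is disposing of the $\omega$-term: its first argument is $O(\varepsilon)$ and tends to $0$, but its second argument is only controlled by $(1-\theta)L+O(\varepsilon)$, where $L=L(\bar x_\varepsilon)$ may a~priori depend on~$\varepsilon$. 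The remedy is a diagonal choice: pick $\theta=\theta_\varepsilon\nearrow 1$ so that $(1-\theta_\varepsilon)L(\bar x_\varepsilon)\to 0$ while $S_{\theta_\varepsilon}\geq S/2$ remains bounded away from~$0$. Continuity of $\omega$ at the origin then forces the right-hand side to~$0$, contradicting the positivity of $S_{\theta_\varepsilon}$.
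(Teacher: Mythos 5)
Your overall architecture (Ekeland's principle plus the fuzzy rule plus condition (A)) matches the paper's, but the way you handle the one delicate point --- that the local Lipschitz constant $L$ depends on the Ekeland point, hence on $\varepsilon$ --- does not work. The second argument of $\omega$ is controlled only by $(1-\theta)L+O(\varepsilon L)+O(\varepsilon)$: note that passing between $\|\Lambda_1\|_{x_1}$ and $\tresp{\Lambda_1\circ d\varphi(x_1)^{-1}}_{\overline{x}}$ already costs a multiplicative factor $(1+\varepsilon)^{\pm 1}$, i.e.\ an additive error of order $\varepsilon\|\Lambda_1\|_{x_1}\le \varepsilon L$, so even your ``$O(\varepsilon)$'' hides an $\varepsilon L$ term. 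Your proposed remedy --- choosing $\theta=\theta_\varepsilon$ so that $(1-\theta_\varepsilon)L(\overline{x}_\varepsilon)\to 0$ --- is circular: the point $\overline{x}$ is produced by applying Ekeland's principle to $v-\theta u$, so $\overline{x}$, and therefore $L(\overline{x})$, is only determined \emph{after} $\theta$ has been fixed. Since the hypothesis is only local Lipschitzness, there is no a priori bound on $L(\overline{x}(\theta,\varepsilon))$ as $\theta\to 1$ (it could grow faster than $(1-\theta)^{-1}$), and reversing the limits (first $\varepsilon\to 0$, then $\theta\to 1$) does not help either, because $\omega$ is merely continuous with $\omega(0,0)=0$, so $\omega(0,s)$ need not be small for $s\neq 0$.

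The paper removes the problem at its source: no $\theta$ is introduced at all (Ekeland is applied directly to $v-u$), and instead the chart around $\overline{x}$ is chosen to satisfy the Palais/bi-Lipschitz condition with constant $1+\overline{\varepsilon}$, where $\overline{\varepsilon}=\min\{\varepsilon,\varepsilon K_{\overline{x}}^{-1}\}$ and $K_{\overline{x}}$ is the local Lipschitz constant of $u$ near $\overline{x}$. With this adaptive choice every error term of the form $\overline{\varepsilon}\,\|\Lambda_1\|_{x_1}$ is at most $\varepsilon$ no matter how large $K_{\overline{x}}$ is, which yields $\bigl|\,\|\Lambda_1\|_{x_1}-\|\Lambda_2\|_{x_2}\bigr|\le \varepsilon(4+4\varepsilon+\varepsilon^2)\to 0$ and lets the continuity of $\omega$ at the origin, together with clause (4) of the fuzzy rule, finish the argument. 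If you insert this adaptive choice of chart into your scheme you can drop $\theta$ entirely; as written, the diagonal step is a genuine gap.
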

\begin{proof}
Let us fix $\varepsilon>0$. By applying the Ekeland variational principle to $v-u$,
we can find a point $\overline{x}\in M$ such that
\begin{equation}\label{casiminimo}
\inf_M (v-u) >(v-u)(\overline{x})-\varepsilon
\end{equation}
and
\begin{equation*}
(v-u)(y)\ge (v-u)(\overline{x})-\varepsilon d(y,\overline{x}), \,\, \text{ for all }
 y \in M.
\end{equation*}
Since $(v-u)(y)+\varepsilon d(y, \overline{x})$ attains a minimum
at $\overline{x}$,  $0\in D^-(v-u+\varepsilon d(\cdot, \overline{x}))(\overline{x})$.

Let us assume that $u$ is Lipschitz in a neighborhood of $\overline{x}$ (the other
case is analogous). Thus, there is an open  subset $A\subset M$ with $\overline{x}\in A$ and a constant
$K_{\overline{x}}>0$
such that   $u$  is $K_{\overline{x}}$ --Lipschitz in  $A$.
Let us consider, as we did in Theorem \ref{uniqeik}, the norm
$|||w|||_{\overline{x}}=
||d\varphi^{-1}(\varphi(\overline{x}))(w)||_{\overline{x}} $ for $w\in X$.

Let $(U,\varphi)$ be a chart with $\overline{x}\in U\subset A$ satisfying the Palais
condition for $1+\overline{\varepsilon}$, where
$\overline{\varepsilon}=
\min\{\varepsilon, \varepsilon K_{\overline{x}}^{-1}\}$ such that $\varphi:(U,d)\to 
(\varphi(U), |||\cdot|||_{\overline{x}})$ is 
$(1+\varepsilon)$-bi-Lipschitz (Lemma \ref{desigualdades:BiLipschitz}).

In addition, we consider for  $x\in U$ and
$d\varphi(x):(T_xM, \|\cdot\|_x)\rightarrow (X, \tresp{\cdot}_{\overline{x}})$,
the norms
\begin{align*}
 |||d\varphi(x)|||_{x,\overline{x}}&=\sup \{|||d\varphi(x)(v)|||_{\overline{x}}:
 \, \|v\|_{x}\le 1\},\\
|||d\varphi^{-1}(\varphi(x))|||_{\overline{x},x}&
=\sup\{\|d\varphi^{-1}(\varphi(x))(v)\|_{x}: \, |||v|||_{\overline{x}}\le 1\}.
\end{align*}
We obtained in the proof of Theorem \ref{uniqeik} that, for  $x\in U$,
\begin{equation*}
(1+\overline{\varepsilon})^{-1}\leq |||d\varphi(x)|||_{x,\overline{x}}\leq (1+\overline{\varepsilon}) \quad
\text{and} \quad
(1+\overline{\varepsilon})^{-1}\leq |||d\varphi^{-1}\big(\varphi(x)\big)|||_{\overline{x},x}
\leq (1+\overline{\varepsilon}).
\end{equation*}
Finally, for a linear functional $L:(X,|||\cdot|||_{\overline{x}})\to \mathbb R$,
let us consider the norm
\begin{equation*}
|||L|||_{\overline{x}}=\sup\{|L(v)|:\, |||v|||_{\overline{x}}\le 1\}.
\end{equation*}
Notice that we can consider a Lipschitz extension of $u|_A$ to $M$, denoted by 
 $\tilde{u}:M\to \mathbb R$, in order to apply the local fuzzy rule to 
$v-\tilde{u}+\varepsilon d(\cdot,\overline{x})$.
Thus, by applying  Proposition \ref{fuzzy}, we get  points $x_1,x_2,x_3\in U$ and functionals
$\Delta_1\in D^-(-u)(x_1)$, $\Delta_2\in D^- v(x_2)$ and
$\Delta_3\in D^-(\varepsilon d(\cdot, \overline{x}))(x_3)$ such that
\begin{enumerate}

\item[(i)] $d(x_i,\overline{x})<{\varepsilon}$, for $i=1,2,3$,

\item[(ii)] $|v(x_2)-v(\overline{x})|<{\varepsilon}$ and
$|u(x_1)-u(\overline{x})|<{\varepsilon}$
 and

\item[(iii)]  $|||\Delta_1\circ d\varphi(x_1)^{-1}
+\Delta_2\circ d\varphi(x_2)^{-1}+\Delta_3\circ d\varphi(x_3)^{-1}-0\circ d\varphi(\overline{x})^{-1}|||_{\overline{x}}
< \varepsilon$.

\item[(iv)] $\max \big\{|||\Delta_1 \circ  d\varphi(x_1)^{-1}|||_{\overline{x}}\,,\,
 |||\Delta_2 \circ d\varphi(x_2)^{-1}|||_{\overline{x}}\big\}
\cdot d(x_1,x_2)<\varepsilon$.

\end{enumerate}

Let us denote $\Lambda_1=-\Delta_1 \in D^{+}u(x_1)$, $\Lambda_2= \Delta_2 \in D^{-}v(x_2)$ and $\Lambda_3
 = \varepsilon^{-1} \Delta_3 \in D^{-}(d(\cdot,x_3))(x_3)$.
Then,
\begin{equation} \label{condition3}
|||-\Lambda_1\circ d\varphi(x_1)^{-1}
+\Lambda_2\circ d\varphi(x_2)^{-1}+\varepsilon\Lambda_3\circ d\varphi(x_3)^{-1}
|||_{\overline{x}}
< {\varepsilon}.
\end{equation}
 From \eqref{casiminimo} and condition (ii) we get
\begin{equation}\label{poilh}
\inf\limits_M (v-u)> (v-u)(\overline{x})-\varepsilon> v(x_2)-u(x_1)-3\varepsilon.
\end{equation}
Since $u$ is a viscosity subsolution of \eqref{E1} and $v$ is a viscosity supersolution of \eqref{E1},
we get
\begin{align}\label{subsup}
-u(x_1) &\geq H(x_1,\|\Lambda_1\|_{x_1}),\\
v(x_2)&\geq -H(x_2,\|\Lambda_2\|_{x_2}). \notag
\end{align}
Consequently, by  inequalities \eqref{poilh} and \eqref{subsup},
\begin{align}\notag
\inf\limits_M (v-u)  &
>H(x_1,\|\Lambda_1\|_{x_1})-H(x_2,\|\Lambda_2\|_{x_2})-3\varepsilon \ge
\\ &\ge -\big[\omega(d(x_1,x_2), \|\Lambda_1\|_{x_1}-\|\Lambda_2\|_{x_2})+
C\max\{\|\Lambda_1\|_{x_1}, \|\Lambda_2\|_{x_2}\}d(x_1,x_2)\big]
-3\varepsilon,\label{7b}
\end{align}
\noindent where $\omega$ is the  function and $C\ge 0$ is the constant given in condition (A) for $H$. Now, inequality \eqref{condition3} above yields
\begin{equation*}
\Big| |||\Lambda_1\circ d\varphi(x_1)^{-1}|||_{\overline{x}}
-|||\Lambda_2\circ d\varphi(x_2)^{-1}|||_{\overline{x}}\Big|\le
 |||\varepsilon\Lambda_3\circ d\varphi(x_3)^{-1}|||_{\overline{x}}+ {\varepsilon}.
\end{equation*}
\noindent Recall that the function $d(\cdot,\overline{x})$ is 1-Lipschitz and thus
$||\Lambda_3||_{x_3}\le 1$. Therefore,
\begin{equation*}
|||\varepsilon\Lambda_3\circ d\varphi(x_3)^{-1}|||_{\overline{x}}\le
\varepsilon \|\Lambda_3\|_{x_3}|||d\varphi(x_3)^{-1}|||_{\overline{x},x_3}
\le \|\Lambda_3\|_{x_3}\varepsilon(1+
\varepsilon)\le \varepsilon(1+{\varepsilon}).
\end{equation*}
 Now,
\begin{equation*}
|||\Lambda_2\circ d\varphi(x_2)^{-1}|||_{\overline{x}}
\ge \|\Lambda_2\|_{x_2}|||d\varphi(x_2)|||_{x_2,\overline{x}}^{-1}
\ge \|\Lambda_2\|_{x_2} (1+\overline{\varepsilon})^{-1}
\end{equation*}
 and
\begin{equation*}
\tresp{\Lambda_1\circ d\varphi(x_1)^{-1}}_{\overline{x}}\le \|\Lambda_1\|_{x_1}
\tresp{d\varphi(x_1)^{-1}}_{\overline{x}, x_1}\le \|\Lambda_1\|_{x_1} (1+\overline{\varepsilon}).
\end{equation*}
\noindent Therefore,
\begin{equation*}
- \|\Lambda_1\|_{x_1} (1+\overline{\varepsilon})+ \|\Lambda_2\|_{x_2} (1+\overline{\varepsilon})^{-1}
\le\varepsilon(2+\varepsilon).
\end{equation*}
\noindent  Since $u$ is $K_{\overline{x}}$ --Lipschitz in  $U$, we have $||\Lambda_1||_{x_1}
\le K_{\overline{x}}$\, and, by computing,
we obtain
\begin{align*}
\|\Lambda_2\|_{x_2}- \|\Lambda_1\|_{x_1} &<\varepsilon(2+\varepsilon)
+\overline{\varepsilon}\|\Lambda_1\|_{x_1}
+\frac{\overline{\varepsilon}}{1+\overline{\varepsilon}} \|\Lambda_2\|_{x_2}
 \\ &\le \varepsilon(2+\varepsilon)
+ \overline{\varepsilon}\|\Lambda_1\|_{x_1}
+\overline{\varepsilon}\bigl( \varepsilon(2+\varepsilon)+(1+\overline{\varepsilon})
 \|\Lambda_1\|_{x_1}\bigr) \\
 &\le   \varepsilon(4+4\varepsilon+\varepsilon^2).
\end{align*}
In an analogous way we obtain 
$\|\Lambda_1\|_{x_1}- \|\Lambda_2\|_{x_2} <
 \varepsilon(4+4\varepsilon+\varepsilon^2)$. Also, condition (iv) yields

\begin{align}\notag
\varepsilon & >
\max \big\{|||\Delta_1 \circ  d\varphi(x_1)^{-1}|||_{\overline{x}}\,,\,
 |||\Delta_2 \circ d\varphi(x_2)^{-1}|||_{\overline{x}}\big\}
\cdot d(x_1,x_2) \ge \\  \label{fuzzyplus} 
&\ge (1+\varepsilon)^{-1}\max \big\{||\Delta_1 ||_{x_1}\,,\,
 ||\Delta_2 ||_{x_2}\big\}\cdot d(x_1,x_2).
\end{align}
In addition,  $d(x_1,x_2)<2\varepsilon$ and, by the  continuity of
$\omega$ and inequality \eqref{fuzzyplus}, we obtain
 \begin{equation*}
\omega \big(d(x_1,x_2), \|\Lambda_1\|_{x_1}-\|\Lambda_2\|_{x_2}\big)+
C\max \big\{\|\Lambda_1\|_{x_1}, \|\Lambda_2\|_{x_2}\big\} \cdot d(x_1,x_2)\to 0 \text{ as }
 \varepsilon\to 0.
\end{equation*}
 Finally, inequality \eqref{7b} yields $\inf_{M}(v-u)\ge 0$.
\end{proof}
\begin{Rem} \begin{enumerate} \label{remarksestacionarias}
\item If we assume in the assumptions of Theorem \ref{estacionarias} that  either $u$ or $v$ is $L$-Lipschitz, then it is enough to assume that the Hamiltonian $H$ is uniformly continuous in $ M\times [0,R]$ for some $R>L$.
\item  It is worth noticing that Theorem \ref{estacionarias} holds (with few modifications in the proof) for the weaker condition
on $H$ denoted as (*) 
 in \cite{DG}: a Hamiltonian $H$ of \eqref{E1}
verifies condition (*) if
\begin{align*}
|H(x_1,t)-H(x_2,t)|\to 0 &\text{ \ \ as \ \ }  d(x_1,x_2)(1+|t|)\to 0  \text{ \ \  \ uniformly on 
\   } t \in \mathbb  R, \, x_1, x_2 \in M \text{ and }
\\
|H(x,t_1)-H(x,t_2)|\to 0 &\text{ \ \ as \ \ }  |t_1-t_2|\to 0   \text{  \ \ \  uniformly on \  } 
 x \in M, \, t_1, t_2 \in \mathbb R. 
\end{align*}
\end{enumerate}
\end{Rem}

A few modifications of Theorem \ref{estacionarias} yield the following results on the stability  of the viscosity solutions.


\begin{Pro}\label{estabilidad1}
Let $M$ be a complete  ${C}^1$ Finsler manifold modeled on a Banach space with
a $C^1$ Lipschitz bump function
 and let $H_1,\, H_2\colon M\times \mathbb R\to \mathbb{R}$ be two
Hamiltonians of \eqref{E1}. Assume that $H_1$ and $H_2$ verify condition (A).
If $u$ is a viscosity subsolution of \eqref{E1} for the Hamiltonian $H_1$ and $v$ is a viscosity supersolution of \eqref{E1} for the Hamiltonian $H_2$, the functions $u$ and $v$
are bounded and for every $x\in M$ either $u$ or $v$ is Lipschitz in a neighborhood of $x$, then
\begin{equation*}
\sup\limits_M(u-v)\le \sup\limits_{M\times \Real}(H_2-H_1).
\end{equation*}
\end{Pro}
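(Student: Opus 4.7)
The plan is to follow the argument of Theorem \ref{estacionarias} almost verbatim, keeping track of how a nonzero right-hand side $K := \sup_{M\times\mathbb R}(H_2-H_1)$ enters the estimates. We may assume $K<\infty$ (otherwise the inequality is trivial) and argue by contradiction, assuming $\sup_M(u-v)>K$, equivalently $\inf_M(v-u)<-K$. Fix $\varepsilon>0$ small enough that $\sup_M(u-v)-K>\varepsilon$; since $v-u$ is lower semicontinuous and bounded below, the Ekeland variational principle applied to $v-u$ on the complete metric space $M$ produces $\overline{x}\in M$ at which $v-u$ is almost minimized and $0\in D^-\bigl(v-u+\varepsilon d(\cdot,\overline{x})\bigr)(\overline{x})$.

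Next, I would reproduce the chart and norm setup from the proof of Theorem \ref{estacionarias}: pick a chart $(U,\varphi)$ with $\overline{x}\in U$ satisfying the Palais condition for $1+\overline{\varepsilon}$ with $\overline{\varepsilon}=\min\{\varepsilon,\varepsilon K_{\overline{x}}^{-1}\}$, and introduce the norms $\tresp{\cdot}_{\overline{x}}$, $|||d\varphi(x)|||_{x,\overline{x}}$, and the dual $\tresp{\cdot}_{\overline{x}}$ defined there. Replacing $u|_U$ (which is $K_{\overline{x}}$-Lipschitz by hypothesis) by a global Lipschitz extension $\tilde u$ on $M$, I would apply the fuzzy rule (Proposition \ref{fuzzy}) to $v-\tilde u+\varepsilon d(\cdot,\overline{x})$. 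This yields points $x_1,x_2,x_3\in U$ and functionals $\Lambda_1\in D^+u(x_1)$, $\Lambda_2\in D^-v(x_2)$, and $\Lambda_3\in D^-\bigl(d(\cdot,\overline{x})\bigr)(x_3)$ satisfying conditions (i)--(iv) of that proposition.

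Now I invoke the two viscosity inequalities separately: $u(x_1)\le -H_1(x_1,\|\Lambda_1\|_{x_1})$ because $u$ is a subsolution for $H_1$, and $v(x_2)\ge -H_2(x_2,\|\Lambda_2\|_{x_2})$ because $v$ is a supersolution for $H_2$. The algebraic step that replaces the one in Theorem \ref{estacionarias} is the decomposition
\begin{equation*}
u(x_1)-v(x_2)\le \bigl[H_2(x_2,\|\Lambda_2\|_{x_2})-H_1(x_2,\|\Lambda_2\|_{x_2})\bigr]+\bigl[H_1(x_2,\|\Lambda_2\|_{x_2})-H_1(x_1,\|\Lambda_1\|_{x_1})\bigr].
\end{equation*}
The first bracket is bounded above by $K$ by definition of $K$, and the second is controlled, via condition (A) applied to $H_1$ alone, by $\omega(d(x_1,x_2),\|\Lambda_2\|_{x_2}-\|\Lambda_1\|_{x_1})+C\max\{\|\Lambda_1\|_{x_1},\|\Lambda_2\|_{x_2}\}\,d(x_1,x_2)$.

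From here the argument rejoins the proof of Theorem \ref{estacionarias}: combining condition (iii) of the fuzzy rule with the chart bi-Lipschitz bounds gives $|\|\Lambda_1\|_{x_1}-\|\Lambda_2\|_{x_2}|=O(\varepsilon)$, while condition (iv), transferred between $\|\cdot\|_{x_i}$ and $\tresp{\cdot}_{\overline{x}}$ exactly as in \eqref{fuzzyplus}, gives $\max\{\|\Lambda_1\|_{x_1},\|\Lambda_2\|_{x_2}\}\,d(x_1,x_2)=o(1)$ as $\varepsilon\to 0$. Continuity of $\omega$ at $(0,0)$ then forces the second bracket to vanish in the limit, so $u(x_1)-v(x_2)\le K+o(1)$. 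Since $u(x_1)-v(x_2)$ differs from $(u-v)(\overline{x})$ by $O(\varepsilon)$ (condition (ii) of the fuzzy rule), and $(u-v)(\overline{x})$ is within $\varepsilon$ of $\sup_M(u-v)$ by the Ekeland estimate, letting $\varepsilon\to 0$ contradicts $\sup_M(u-v)>K$. I do not expect any genuinely new analytic obstacle: the proof is essentially a diff against Theorem \ref{estacionarias}; the only place the two Hamiltonians enter separately is in the $H_2-H_1$ bracket, so condition (A) is only needed for $H_1$ (or symmetrically for $H_2$). The main bookkeeping difficulty, as before, is the careful transfer between the tangent-space norms $\|\cdot\|_{x_i}$ and the chart norm $\tresp{\cdot}_{\overline{x}}$, handled exactly as in the previous theorem.
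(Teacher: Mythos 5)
Your proposal is correct and is exactly the argument the paper intends: the paper gives no separate proof, stating only that ``a few modifications of Theorem \ref{estacionarias}'' yield the result, and your telescoping decomposition $H_2(x_2,\|\Lambda_2\|_{x_2})-H_1(x_1,\|\Lambda_1\|_{x_1})=[H_2-H_1](x_2,\|\Lambda_2\|_{x_2})+[H_1(x_2,\|\Lambda_2\|_{x_2})-H_1(x_1,\|\Lambda_1\|_{x_1})]$ is precisely that modification, with all the chart, Ekeland and fuzzy-rule estimates carried over unchanged. Your side observation that condition (A) is then only needed for one of the two Hamiltonians is also accurate.
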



An immediate consequence of Proposition \ref{estabilidad1} is the next result. First, let us recall the definition of a equi-continuous family of functions.


\begin{Def}
Let $\Gamma$ be a topological space and let $S$ be an arbitrary set. A family of functions $\{f_\gamma : S\to \Real\}_{\gamma\in \Gamma}$ is equi-continuous at $\gamma_0\in \Gamma$ if for every $\varepsilon>0$ there exists an open neighborhood $U$ of $\gamma_0$ such that $|f_\gamma(s)-f_{\gamma_0}(s)|<\varepsilon$ for all $\gamma \in U$ and $s\in S$. A family $\{f_\gamma: \gamma\in \Gamma\}$ is equi-continuous if  it is equi-continuous at every $\gamma_0\in \Gamma$.
\end{Def}

\begin{Cor}
Let $M$ be a complete  ${C}^1$ Finsler manifold modeled on a Banach space with
a $C^1$ Lipschitz bump function, and $\Gamma$ a topological space. Let  $H_\gamma: M\times \Real \to \Real$  be  Hamiltonians of \eqref{E1} satisfying condition (A) for all $\gamma\in \Gamma$. Let us assume that:
\begin{enumerate}
\item the family of functions $\{H_\gamma: \gamma\in \Gamma\}$ is equi-continuous and
\item  for every  $\gamma\in \Gamma$, the function $u_\gamma:X\to \Real$ is a locally Lipschitz viscosity solution of \eqref{E1} for the Hamiltonian $H_\gamma$.
\end{enumerate}
Then, for every $\gamma_0 \in \Gamma$ and every $\varepsilon>0$, there exists  an open neighborhood $U$ of $\gamma_0$ such that $||u_\gamma - u_{\gamma_0}||_\infty=\sup\{|u_\gamma(x) - u_{\gamma_0}(x)|:\,x\in M\}<\varepsilon$ for all $\gamma\in U$.
\end{Cor}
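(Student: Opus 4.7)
The plan is to reduce the Corollary to the stability result Proposition~\ref{estabilidad1}, applied twice (with the roles of $u_\gamma$ and $u_{\gamma_0}$ interchanged) in order to compare the two viscosity solutions from both sides.

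Given $\gamma_0 \in \Gamma$ and $\varepsilon > 0$, I would first use the equi-continuity of the family $\{H_\gamma\}_{\gamma \in \Gamma}$ at $\gamma_0$, applied to the set $S = M \times \Real$, to select an open neighborhood $U$ of $\gamma_0$ satisfying
\begin{equation*}
\sup_{(x,t)\in M \times \Real} |H_\gamma(x,t) - H_{\gamma_0}(x,t)| < \varepsilon \quad \text{for every } \gamma \in U.
\end{equation*}
Now fix $\gamma \in U$. Since $u_\gamma$ is a viscosity solution of \eqref{E1} for $H_\gamma$, it is in particular both a subsolution and a supersolution for $H_\gamma$, and analogously for $u_{\gamma_0}$ and $H_{\gamma_0}$. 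Moreover, since both $u_\gamma$ and $u_{\gamma_0}$ are locally Lipschitz on $M$, the hypothesis of Proposition~\ref{estabilidad1} that at every $x \in M$ one of the two functions is Lipschitz in a neighborhood of $x$ holds automatically. Applying Proposition~\ref{estabilidad1} with $u = u_\gamma$ (subsolution for $H_1 := H_\gamma$) and $v = u_{\gamma_0}$ (supersolution for $H_2 := H_{\gamma_0}$) gives
\begin{equation*}
\sup_M(u_\gamma - u_{\gamma_0}) \;\le\; \sup_{M \times \Real}(H_{\gamma_0} - H_\gamma) \;<\; \varepsilon.
\end{equation*}
Swapping the roles of the pair (so $u = u_{\gamma_0}$ is a subsolution for $H_{\gamma_0}$ and $v = u_\gamma$ is a supersolution for $H_\gamma$) yields symmetrically $\sup_M(u_{\gamma_0} - u_\gamma) < \varepsilon$. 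Combining the two inequalities, $\|u_\gamma - u_{\gamma_0}\|_\infty \le \varepsilon$, which is the desired conclusion.

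The one point that requires care is the boundedness hypothesis appearing in Proposition~\ref{estabilidad1}: the Corollary's hypotheses give only that the $u_\gamma$ are locally Lipschitz, not globally bounded. In the context of this paper the natural way around this is either to interpret ``viscosity solution'' as implicitly including boundedness (as is standard in the existence/uniqueness theory for \eqref{E1}), or to observe that Proposition~\ref{estabilidad1} only really uses boundedness of $v - u$ (to run the Ekeland variational principle in the proof of Theorem~\ref{estacionarias}); in either reading, the argument above goes through with the same two-sided application of the stability estimate.
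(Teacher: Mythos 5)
Your proof is correct and follows exactly the route the paper intends: the paper gives no separate argument, stating only that the Corollary is an immediate consequence of Proposition \ref{estabilidad1}, and your two-sided application of that proposition combined with equi-continuity is precisely that argument. The only cosmetic adjustment is to invoke equi-continuity with $\varepsilon/2$ so that the final bound is strict rather than $\le \varepsilon$, and your closing remark correctly identifies the boundedness hypothesis as an implicit assumption in the paper's statement (the solutions produced by Corollary \ref{corolarioexistenciaFinsler}, to which this stability result is meant to apply, are bounded).
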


In the  following results, we adapt  Perron's method to Finsler manifolds and, in particular to prove the existence and uniqueness of the bounded viscosity solutions on a  class of   Hamilton-Jacobi equations of the form  \eqref{E1}. Let us consider the more general class of Hamilton-Jacobi equations of the form
\begin{equation} \label{E2}\tag{E2}
F(x,du(x),u(x))=0,
\end{equation}
where $F: TM^* \times \mathbb R\to \mathbb R$ is a continuous Hamiltonian.
Let us recall that  the topology of $TM^* $ satisfies the first axiom of countability:  for each point $(x,\Lambda)\in TM^* $ and  a fixed chart $(U,\psi)$ such that $x\in U$, the family
\begin{equation*}
U^n_n(x):= \{ (y,\Delta) \in TU^*:\,d(y,x)<\frac{1}{n},\, \text{ and }
 || \Lambda \circ d\psi(x)^{-1} - \Delta \circ d\psi(y)^{-1}  || < \frac{1}{n} \} 
\end{equation*} 
is  a countable neighborhood basis of $(x,\Lambda)$.
Also, a sequence $\{(x_n, \Delta_n)\}_{n\in \mathbb N}\subset TM^*$ converges to $(x,\Delta)$ in  $TM^*$ iff
\begin{enumerate}
\item $\lim_{n\to \infty} d(x_n,x)=0$ and
\item $\lim_{n\to \infty}||\Delta_n\circ d\varphi(x_n)^{-1}-\Delta\circ d\varphi(x)^{-1}||= 0$ for every
chart $(U,\varphi)$ on $M$ with $x\in U$. Equivalently, there is a chart $(U,\varphi)$ on $M$ with $x\in U$ such that $\lim_{n\to \infty}||\Delta_n\circ d\varphi(x_n)^{-1}-\Delta\circ d\varphi(x)^{-1}||= 0$.
 (Let us recall that, in general, we assume 
$\Delta_n\circ d\varphi(x_n)^{-1}$  defined only for $n\ge n_0$, where  $n_0$ depends on the chart $(U, \varphi)$).
\end{enumerate}
Notice that we can define the continuity of $F$ (given in \eqref{E2}) in terms
of sequences: the Hamiltonian $F$ is continuous at $(x,\Delta,t)\in TM^*\times \mathbb R$ if  $\lim_{n\to \infty}F(x_n, \Delta_n,t_n)=F(x, \Delta,t)$
for every sequence $\{(x_n, \Delta_n,t_n)\}_{n\in \mathbb N}\subset TM^*\times \mathbb R$ with limit
$(x,\Delta,t)$.

It can be easily  checked that condition (2) above implies $ \lim_n ||\Delta_n||_{x_n} = ||\Delta||_x$ and thus, for a continuous function $H:M\times \mathbb R\to \mathbb R$,  the Hamilton-Jacobi equation considered in \eqref{E1}
\begin{equation*}
F(x,du(x),u(x)):=u(x)+H(x,||du(x)||_x)=0
\end{equation*}
is a particular case of \eqref{E2}.
Let us recall that a function $u\colon M\to\mathbb{R}$
\begin{enumerate}
\item[(1)]  is a \textbf{viscosity subsolution of \eqref{E2}} if $u$ is
upper semicontinuous and $F(x,\Delta,u(x))\leq 0$ for every $x\in M$ and
$\Delta\in D^+u(x)$,
\item[(2)]   is a
\textbf{viscosity supersolution of \eqref{E2}} if
 $u$ is lower semicontinuous and $F(x,\Delta,u(x))\geq 0$ for every
$x\in M$ and  $\Delta\in D^-u(x)$,
\item[(3)]  is a \textbf{viscosity solution of \eqref{E2}}
if $u$ is simultaneously a viscosity subsolution and a viscosity supersolution of
\eqref{E2}.
\end{enumerate}

\begin{Lem}\label{envelope}
Let $M$ be a ${C}^1$ Finsler manifold modeled on a Banach space with
a $C^1$ Lipschitz bump function.  Let $\Omega$ be an open subset of $M$. Let $\mathcal{F}$ be a locally uniformly bounded  family of  functions from $\Omega$ into $\mathbb R$ and $u=\sup\{v:\, v\in \mathcal{F}\}$ on $\Omega$. If every $v \in \mathcal{F}$ is a viscosity subsolution of \eqref{E2} on $\Omega$, where the Hamiltonian $F:T\Omega^*\times \Real \to \Real$ is continuous, then $u^*$ is also  a viscosity subsolution of \eqref{E2} on $\Omega$.
\end{Lem}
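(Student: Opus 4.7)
The plan is to reduce the statement to a direct application of Proposition \ref{conv}. Since every $v\in\mathcal{F}$ is a viscosity subsolution, each such $v$ is (by definition) upper semicontinuous, and the family is locally uniformly bounded by hypothesis. Hence the standing assumptions of Proposition \ref{conv} are met.

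First I would note that $u^{*}$ is upper semicontinuous by construction. Now fix an arbitrary $x\in\Omega$ and an arbitrary $\Delta\in D^{+}u^{*}(x)$; the goal is to verify the subsolution inequality $F(x,\Delta,u^{*}(x))\le 0$. By Proposition \ref{conv}, there exist sequences $\{v_{n}\}_{n\in\mathbb{N}}\subset\mathcal{F}$ and $\{(x_{n},\Delta_{n})\}_{n\in\mathbb{N}}\subset TM^{*}$ with $x_{n}\in\Omega$ and $\Delta_{n}\in D^{+}v_{n}(x_{n})$ for every $n$, satisfying $\lim_{n\to\infty}v_{n}(x_{n})=u^{*}(x)$ and $(x_{n},\Delta_{n})\to(x,\Delta)$ in $TM^{*}$.

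Because every $v_{n}\in\mathcal{F}$ is a viscosity subsolution of \eqref{E2}, I have
\begin{equation*}
F(x_{n},\Delta_{n},v_{n}(x_{n}))\le 0 \quad\text{for every }n\in\mathbb{N}.
\end{equation*}
Passing to the limit and invoking the (sequential) continuity of $F$ on $TM^{*}\times\mathbb{R}$ — as formulated in the paragraph preceding the lemma via convergence in the cotangent bundle — together with the convergence $(x_{n},\Delta_{n},v_{n}(x_{n}))\to(x,\Delta,u^{*}(x))$ in $TM^{*}\times\mathbb{R}$, I conclude
\begin{equation*}
F(x,\Delta,u^{*}(x))=\lim_{n\to\infty}F(x_{n},\Delta_{n},v_{n}(x_{n}))\le 0.
\end{equation*}
Since $x\in\Omega$ and $\Delta\in D^{+}u^{*}(x)$ were arbitrary, $u^{*}$ is a viscosity subsolution of \eqref{E2} on $\Omega$.

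The only real step is matching the setting of Proposition \ref{conv} (which is stated for upper semicontinuous families) to the hypothesis here — but this is immediate from the definition of viscosity subsolution. There is no serious obstacle; the work has already been done in Proposition \ref{conv}, where the technical issue of convergence in $TM^{*}$ was handled through a chart argument reducing to the Banach space analogue in \cite[Chapter VIII, Proposition 1.6]{DGZ}.
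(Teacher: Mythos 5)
Your proof is correct and follows essentially the same route as the paper: both apply Proposition \ref{conv} to produce the sequences $\{v_n\}$ and $\{(x_n,\Delta_n)\}$, use the subsolution inequality $F(x_n,\Delta_n,v_n(x_n))\le 0$, and pass to the limit via the sequential continuity of $F$ on $TM^*\times\mathbb{R}$. Your explicit remark that the upper semicontinuity required by Proposition \ref{conv} is supplied by the definition of viscosity subsolution is a detail the paper leaves implicit, but there is no substantive difference.
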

\begin{proof}
Let us consider $x\in \Omega$ and $\Delta \in D^+u^*(x)$. By Proposition \ref{conv} (stability of the superdifferentials) there exist  sequences  $\{v_n\}$ in $\mathcal{F}$ and $\{(x_n,\Delta_n)\}_{n\in \mathbb N}$ in $TM^*$ with $x_n \in \Omega$ and $\Delta_n \in D^+v_n(x_n)$ for every $n\in \mathbb N$, such that
\begin{itemize}
\item[(i)] $\lim_{n\to \infty} v_n(x_n)=u^*(x)$, and
\item[(ii)] $\lim_{n\to \infty}  (x_n,\Delta_n) =(x,\Delta)$ in  $TM^*$ (i.e. $\lim_{n\to \infty} d(x_n,x)=0$ and
$\lim_{n\to \infty}||\Delta_n\circ d\varphi(x_n)^{-1}-\Delta\circ d\varphi(x)^{-1}||= 0$ for every
chart $(U,\varphi)$ with $x\in U$).
\end{itemize}

Since $v_n$ is a viscosity subsolution of \eqref{E2} on $\Omega$ for every $n\in \mathbb N$, we have $F(x_n,\Delta_n, v_n(x_n))\le 0$ for every $n\in \mathbb N$. Hence,  $F(x,\Delta,u^*(x))\le 0$ and $u^*$ is a viscosity subsolution of \eqref{E2} on $\Omega$.
\end{proof}
\begin{Rem}
In particular, in the above context, the supremum of two viscosity subsolutions of  \eqref{E2} on $\Omega$ is a viscosity subsolution of \eqref{E2} on $\Omega$.
\end{Rem}
\begin{Pro}\label{familias de subsoluciones}
Let $M$ be a  ${C}^1$ Finsler manifold modeled on a Banach space with
a $C^1$ Lipschitz bump function.  Let $\Omega$ be an open subset of $M$ and let $F:T\Omega^*\times \mathbb R\to \mathbb R$ be a continuous Hamiltonian  on $\Omega$. Assume that there are two continuous functions $s_0,s_1:\Omega \to \mathbb R$, which are respectively a viscosity subsolution and a viscosity supersolution of \eqref{E2} on $\Omega$ and $s_0\le s_1$ on $\Omega$. Let us define the family
 \begin{equation*}
 \mathcal F=\{w:\Omega\to \mathbb R:\, s_0\le w\le s_1 \text{ on } \Omega \text{ and }
 w \text{ is  a viscosity subsolution of } \eqref{E2} \text{ on } \Omega\},
 \end{equation*}
and the function $u=\sup \mathcal F$. Then, $u^*$ is a viscosity subsolution of \eqref{E2} on $\Omega$, $(u^*)_*$ is a viscosity supersolution of \eqref{E2} on $\Omega$ and  $s_0\le (u^*)_*\le u^*\le s_1$.
\end{Pro}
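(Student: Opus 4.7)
The statement has three parts. The first, that $u^*$ is a viscosity subsolution of \eqref{E2}, is immediate from Lemma \ref{envelope} applied to $\mathcal{F}$, which is locally uniformly bounded since $s_0\le w\le s_1$ for every $w\in\mathcal{F}$ with $s_0, s_1$ continuous. The chain of inequalities follows because $s_0\in\mathcal{F}$ gives $s_0\le u$ while $u\le s_1$ by the definition of $\mathcal{F}$; taking envelopes and using the continuity of $s_0,s_1$ (so that each coincides with both of its own envelopes), together with $(u^*)_*\le u^*$, yields $s_0\le(u^*)_*\le u^*\le s_1$. For the remaining and substantive part, that $v:=(u^*)_*$ is a viscosity supersolution, I would argue by contradiction following Perron's bump method: suppose there exist $x_0\in\Omega$ and $\Delta_0\in D^-v(x_0)$ with $F(x_0,\Delta_0,v(x_0))<0$.

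First, the case $v(x_0)=s_1(x_0)$ is impossible: any $C^1$ function $g$ with $dg(x_0)=\Delta_0$ witnessing $\Delta_0\in D^-v(x_0)$ also witnesses $\Delta_0\in D^-s_1(x_0)$, since $s_1-g\ge v-g\ge v(x_0)-g(x_0)=s_1(x_0)-g(x_0)$ locally; the supersolution property of $s_1$ then contradicts $F(x_0,\Delta_0,v(x_0))<0$. So $v(x_0)<s_1(x_0)$. Now choose a $C^1$ test function $\phi$ with $d\phi(x_0)=\Delta_0$ and $v-\phi$ having a local minimum at $x_0$, normalized so that $\phi(x_0)=v(x_0)$ and $v\ge\phi$ on a neighborhood $W'$ of $x_0$. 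Fix a bi-Lipschitz chart $(W,\psi)$ around $x_0$ with $\psi(x_0)=0$ (Lemma \ref{desigualdades:BiLipschitz}) and a $C^1$ Lipschitz bump $\beta\colon X\to[0,1]$ with $\beta(0)=1$ and $\supp\beta\subset B_X(0,1/2)$. For $r>0$ small enough that $\psi^{-1}(\overline{B_X(0,r)})\subset W\cap W'$, and setting $\gamma:=r^2$ and $V_r:=\psi^{-1}(B_X(0,r))$, define
\[
\phi_{\gamma,r}(y)=\phi(y)+\gamma\,\beta\bigl(\psi(y)/r\bigr)\quad\text{on } V_r.
\]
Then $\phi_{\gamma,r}(x_0)=v(x_0)+\gamma$; the perturbation term has chart-derivative of order $\gamma/r=r$, hence vanishing as $r\to 0$; and $\phi_{\gamma,r}=\phi$ on the annular region $V_r\setminus\psi^{-1}(B_X(0,r/2))$. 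By the sequential continuity of $F$ on $T\Omega^*\times\mathbb{R}$, together with $F(x_0,\Delta_0,v(x_0))<0$ and $v(x_0)<s_1(x_0)$, shrinking $r$ as needed guarantees both $F(y,d\phi_{\gamma,r}(y),\phi_{\gamma,r}(y))<0$ and $\phi_{\gamma,r}(y)<s_1(y)$ for every $y\in V_r$; so $\phi_{\gamma,r}$ is a classical (hence viscosity) subsolution of \eqref{E2} on $V_r$, bounded above there by $s_1$.

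Since $\phi_{\gamma,r}=\phi\le v\le u^*$ on the annular region, the glued function
\[
\tilde u(y)=\begin{cases}\max\{u^*(y),\phi_{\gamma,r}(y)\}&y\in V_r,\\ u^*(y)&y\in\Omega\setminus V_r\end{cases}
\]
is upper semicontinuous, satisfies $s_0\le\tilde u\le s_1$, and is a viscosity subsolution of \eqref{E2} on $\Omega$ (apply Lemma \ref{envelope} on $V_r$ to the family $\{u^*,\phi_{\gamma,r}\}$; outside $V_r$ the function coincides with $u^*$, which is already a subsolution). Thus $\tilde u\in\mathcal{F}$, forcing $\tilde u\le u$. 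To close, pick a sequence $y_n\to x_0$ with $u^*(y_n)\to v(x_0)$, which exists by the definition $v=(u^*)_*$. For $n$ large, $y_n\in V_r$ and $\phi_{\gamma,r}(y_n)\to v(x_0)+\gamma$ by continuity of $\phi_{\gamma,r}$, so $\tilde u(y_n)\ge v(x_0)+\gamma/2$ eventually; but at the same time $\tilde u(y_n)\le u(y_n)\le u^*(y_n)\to v(x_0)$, forcing $\gamma\le 0$, a contradiction. The main technical obstacle is the construction of $\phi_{\gamma,r}$: coordinating $\gamma$ and $r$ so that the derivative perturbation remains small enough (via continuity of $F$) to preserve the strict subsolution inequality while producing the strict improvement $\phi_{\gamma,r}(x_0)>v(x_0)$ and a clean gluing on the annulus. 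This coordination rests on the bi-Lipschitz property of the chart (Lemma \ref{desigualdades:BiLipschitz}) and the availability of a $C^1$ Lipschitz bump on $X$.
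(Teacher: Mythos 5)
Your proposal is correct and follows essentially the same Perron bump argument as the paper: reduce to $v(x_0)<s_1(x_0)$ via the supersolution property of $s_1$, perturb the test function by a small $C^1$ Lipschitz bump so as to keep a strict classical subsolution below $s_1$, glue with $u^*$ by taking a maximum (trivial on the annulus where the bump vanishes), and contradict the definition of $(u^*)_*$ at $x_0$. The only differences are cosmetic — your explicit chart-scaled bump with $\gamma=r^2$ versus the paper's "bump with $\sup|b|$ and $\sup\|db\|_x$ small enough", and your sequential extraction of the contradiction versus the paper's direct comparison $v\ge g+b$ on $B(x_0,\delta)$.
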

\begin{proof} The proof is similar to the one given in \cite[Theorem 6.4]{DG}. We shall give it here for completeness. Notice that, since $s_0$ and $s_1$ are continuous  in $\Omega$,  the family $\mathcal F$ is locally bounded on $\Omega$. Thus, by Lemma \ref{envelope}, $u^*$ is a viscosity subsolution of \eqref{E2} on $\Omega$.

Let us suppose that $v=(u^*)_*$ is not a viscosity supersolution of \eqref{E2}. Then, there exist $x_0\in M$ and $\Delta_0\in D^- v(x_0)$ such that $F(x_0,\Delta_0,v(x_0))<0$. According to the definition of the subdifferential, there is a $C^1$ smooth function $g:M\to \Real$ such that $v-g$ attains a local minimum at $x_0$ and $\Delta_0 = dg(x_0)$. Then, there exists  an open neighborhood  $U$ of $x_0$, where $v(x)-g(x)\ge v(x_0)-g(x_0)$ for all $x\in U$.  Notice that $\widetilde{g}(x)=g(x)+v(x_0)-g(x_0)$ is also a $C^1$ smooth function with $\Delta_0 = d\widetilde{g}(x_0)$ and $v-\widetilde{g}$ attains a local minimum at $x_0$, and thus we may assume
\begin{equation}\label{cond:subdif}
F(x_0, dg(x_0),v(x_0))<0, \quad v(x_0)=g(x_0) \quad \text{and} \quad g(x)\le v(x) \quad \text{for all $x\in U$.}
\end{equation}

It is clear that $g\le v\le  s_1$ on $U$.  Let us check that, in fact,
 $g(x_0)<s_1(x_0)$. Indeed, otherwise $s_1-g$ would attain a local minimum at $x_0$ and thus
   $dg(x_0)\in D^-s_1(x_0)$. Since $s_1$ is a viscosity supersolution,
   $0\le F(x_0,dg(x_0),s_1(x_0))= F(x_0,dg(x_0),v(x_0)) <0$, which is a contradiction.

Since $M$ is modeled on a Banach space $X$ with a $C^1$ Lipschitz bump function, we can choose  $\delta>0$ and a $C^1$ Lipschitz bump function $b:M \to [0,1]$ with
\begin{itemize}
\item[(1)] $B(x_0,2\delta) \subset U$,
\item[(2)] $b(x_0)>0$,
\item[(3)]  $b(x)=0$ whenever $d(x,x_0)\ge \delta$, and
\item[(4)] $\sup \{|b(x)|:x\in M\}$ and $\sup\{||db(x)||_x: x\in M\}$ small enough so that
\begin{align*}\label{cond:subdif:2}
& F(x,dg(x)+db(x),g(x)+b(x))<  0 \quad \text{whenever $d(x,x_0)<2\delta$, and}\notag \\
& \quad g(x)+b(x)\le  s_1(x) \quad \text{for every $x\in U$.}
\end{align*}
\end{itemize}
Clearly,  $g+b$ is a viscosity subsolution of \eqref{E2} on $B(x_0,2\delta)$.
Now, define
 \begin{equation*}
w(x)=\left\{ \begin{array}{ll} \max\{g(x)+b(x),u^*(x)\} &\mbox{ for all }x\in B(x_0,2\delta),\\
u^*(x) &\mbox{ for all }x\in \Omega\setminus B(x_0,2\delta).\end{array}\right.
\end{equation*}
On the one hand,  $u^*(x)\ge v(x)\ge g(x)=g(x)+b(x)$ for all $x\in U\setminus \overline{B}(x_0,\delta)$. Therefore,
 $w(x)=u^*(x)$ for all $x\in \Omega_1:=\Omega\setminus \overline{B}(x_0,\delta)$ and then, $w$ is a viscosity subsolution of \eqref{E2} on $\Omega_1$. On the other hand, $w$ is the supremum of two viscosity subsolutions on $\Omega_2:=B(x_0,2\delta)$. Thus $w$ is a viscosity subsolution of \eqref{E2} on $\Omega_2$, and consequently it is a viscosity subsolution on $\Omega=\Omega_1\cup \Omega_2$.

 Since $s_0\le w\le s_1$, we have $w\in \mathcal{F}$ and then,  $w\le u\le u^*$ on 
$\Omega$, and $u^*(x)\ge w(x)\ge g(x)+b(x)$ on $B(x_0, \delta)$. Therefore, $v(x)=(u^*)_*(x)\ge g(x)+b(x)$ on $B(x_0, \delta)$. In particular,
 $v(x_0)=(u^*)_*(x_0)\ge g(x_0)+b(x_0) > g(x_0)$
  which contradicts \eqref{cond:subdif}.
\end{proof}
\begin{Cor} \label{corolarioexistenciaFinsler}
Let $M$ be a complete ${C}^1$ Finsler manifold modeled on a Banach space with
a $C^1$ Lipschitz bump function. Let $H:M\times \Real \to \Real$ be the Hamiltonian of \eqref{E1}. Assume that  there are constants $K_0,K_1>0$ such that $K_0 \le H(x,0) \le K_1$ for all $x\in M$ and at least one of the following conditions holds:
\begin{itemize}
\item[(i)] $H$ is uniformly continuous and $\liminf_{t\to \infty} H(x,t)>K_1$ for each $x\in M$.

\item[(ii)] $H$ satisfies condition (A), \,there is a constant $K_1'$ such that $\liminf_{t\to \infty} H(x,t)\ge K_1'>K_1$ for each $x\in M$ and the limit is locally uniform on $M$.
\end{itemize}
Then, there exists a unique bounded viscosity solution $u$ of the equation \eqref{E1}. Moreover, if we define the family
 \begin{equation*}
 \mathcal{F}:= \{w: M\to \Real: \, -K_1 \le w\le -K_0 \text{ on $M$, and $w$ is a viscosity subsolution of \eqref{E1}}\},
 \end{equation*}
then, the viscosity solution is $u=\sup\{w:w\in \mathcal{F}\}$  and $u$ is locally Lipschitz.
\end{Cor}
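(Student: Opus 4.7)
The plan is to apply Perron's method via Proposition \ref{familias de subsoluciones} with explicit constant barriers, and then invoke the comparison theorem (Theorem \ref{estacionarias}) to identify $u^*$ with $(u^*)_*$. First, I would observe that the constants $s_0 \equiv -K_1$ and $s_1 \equiv -K_0$ serve as a subsolution and supersolution of \eqref{E1} respectively: at a constant function the only (sub/super)differential is $0$, and the assumption $K_0 \le H(x,0) \le K_1$ yields $s_0 + H(x,0) \le 0 \le s_1 + H(x,0)$. Since $s_0 \le s_1$, Proposition \ref{familias de subsoluciones} applied to the family $\mathcal F$ described in the statement produces $u = \sup \mathcal F$ such that $u^*$ is a viscosity subsolution, $(u^*)_*$ is a viscosity supersolution, and $-K_1 \le (u^*)_* \le u^* \le -K_0$ on $M$.

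The central step is to show that $u^*$ is locally Lipschitz. From the subsolution property and the lower bound $u^* \ge -K_1$, every $\Lambda \in D^+u^*(x)$ satisfies $H(x, \|\Lambda\|_x) \le -u^*(x) \le K_1$. I would then use the coercivity of $H$ to extract, for each $x_0 \in M$, some $\delta > 0$ and $R > 0$ such that $H(x, t) > K_1$ whenever $d(x,x_0) < 4\delta$ and $t > R$. In case (ii) this is exactly the local uniform coercivity hypothesis, while in case (i) one combines the pointwise coercivity $\liminf_{t \to \infty} H(x_0, t) = L > K_1$ at $x_0$ with the uniform continuity of $H$: taking $\varepsilon = (L - K_1)/4$, uniform continuity provides $\delta > 0$ so that $|H(x, t) - H(x_0, t)| < \varepsilon$ on $B(x_0, 4\delta) \times \mathbb R$, making coercivity effective uniformly on the ball. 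Hence $\|\Lambda\|_x \le R$ for every $\Lambda \in D^+u^*(x)$ and $x \in B(x_0, 4\delta)$; equivalently, since $D^-(-u^*)(x) = -D^+u^*(x)$, the lower semicontinuous function $-u^*$ has subdifferentials of dual norm at most $R$ on $B(x_0, 4\delta)$. Deville's mean value inequality (Theorem \ref{devillemean}) then forces $-u^*$, and hence $u^*$, to be $R$-Lipschitz on $B(x_0, \delta)$.

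With $u^*$ bounded and locally Lipschitz, and $(u^*)_*$ bounded, the comparison theorem (Theorem \ref{estacionarias}, with the local Lipschitz hypothesis supplied by $u^*$) yields $\inf_M ((u^*)_* - u^*) \ge 0$; combined with the pointwise inequality $(u^*)_* \le u^*$ this forces $(u^*)_* = u^*$ on $M$. Thus $u := u^*$ is continuous, simultaneously a sub- and supersolution, and hence a bounded locally Lipschitz viscosity solution of \eqref{E1}. For uniqueness, if $v$ is another bounded viscosity solution, the same coercivity argument shows $v$ is locally Lipschitz; two applications of Theorem \ref{estacionarias} (exchanging the roles of sub- and supersolution) then give $u = v$. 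In fact the local Lipschitz property of our $u$ alone already allows the comparison to be run in both directions.

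The main obstacle will be the local Lipschitz step: one must convert the pointwise viscosity bound $H(x, \|\Lambda\|_x) \le K_1$ into a genuine Lipschitz estimate on balls. This requires two things: (a) that coercivity hold locally uniformly on $M$, which is precisely the role of hypotheses (i) and (ii); and (b) passing from superdifferential bounds on the upper semicontinuous function $u^*$ to subdifferential bounds on the lower semicontinuous function $-u^*$, in order to apply Deville's mean value inequality (Theorem \ref{devillemean}), which is formulated for subdifferentials of lower semicontinuous functions.
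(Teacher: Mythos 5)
Your proposal is correct and follows the same overall skeleton as the paper's proof: the constant barriers $s_0\equiv -K_1$ and $s_1\equiv -K_0$, Perron's method via Proposition \ref{familias de subsoluciones}, a local Lipschitz bound obtained from coercivity together with Theorem \ref{devillemean}, and Theorem \ref{estacionarias} for uniqueness. The one place where you genuinely diverge is in identifying $u$, $u^*$ and $(u^*)_*$: you prove the Lipschitz estimate only for $u^*$ and then invoke the comparison theorem between the subsolution $u^*$ and the supersolution $(u^*)_*$ to force $(u^*)_*=u^*$. The paper instead runs the coercivity-plus-Deville argument on every member $w$ of $\mathcal{F}$, obtaining the \emph{same} constant $R_x$ on $B(x,\frac{r_x}{4})$ for all of them; the supremum $u$ is then $R_x$-Lipschitz on that ball, hence continuous, and $u=u^*=(u^*)_*$ follows from the definition of the envelopes with no appeal to comparison at that stage. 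The paper's variant is slightly more economical and also settles directly that $\sup\mathcal{F}$ itself, and not merely its upper envelope, is the solution; with your route you should add the one-line observation that $u^*\in\mathcal{F}$ (it is a subsolution squeezed between $-K_1$ and $-K_0$), whence $u^*\le\sup\mathcal{F}\le u^*$ and so $u=u^*$ as the statement requires. Everything else --- your case analysis of (i) and (ii) to extract locally uniform coercivity, and the passage from superdifferential bounds on $u^*$ to subdifferential bounds on the lower semicontinuous function $-u^*$ before applying Theorem \ref{devillemean} --- matches the paper.
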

\begin{proof}
It can be easily checked that the functions $s_0(x)=-K_1$ and $s_1(x)=-K_0$ are respectively a viscosity subsolution and a viscosity
supersolution of \eqref{E1}. Let us take $u^*$ the upper semicontinuous envelope of $u:=\sup\{w:w\in \mathcal{F}\}$, and  $(u^*)_*$ the lower semicontinuous envelope of $u^*$. By Proposition \ref{familias de subsoluciones}, $u^*$ and $(u^*)_*$ are respectively a viscosity subsolution and a viscosity supersolution of \eqref{E1} and
$-K_1\le (u^*)_*\le u^*\le -K_0$.

Notice that if $w$ is a viscosity subsolution of \eqref{E1} with $-K_1 \le w\le -K_0$ in $M$, then $H(x,||\Delta||_x)\le -w(x)\le K_1$ for each $x\in M$ and $\Delta \in D^+ w(x)$. Let us fix $x\in M$.  Since $H$  satisfies either condition $(i)$ or $(ii)$ above, there are constants $r_x, R_x>0$ (depending only on $H$ and $x$) such that $ H(z,t)>K_1$ whenever $z\in B(x,r_x)$ and $t>R_x$. Therefore
 $||\Delta||_z\le R_x$ for all $z\in B(x,r)$ and $\Delta \in D^+ w(z)$. By applying Theorem \ref{devillemean}, we conclude that $-w$ is $R_x$--Lipschitz in $B(x,\frac{r_x}{4})$, and so is $w$.

 This implies that the function $u=\sup\{w:w\in \mathcal{F}\}$ satisfies the same Lipschitz condition: $u$ is  $R_x$--Lipschitz in $B(x,\frac{r_x}{4})$.
Thus, by the definition of upper and lower semicontinuous envelopes, we have $u=u^*=(u^*)_*$. This yields $u=\sup\{w:w\in \mathcal{F}\}$ is a  bounded and locally Lipschitz viscosity solution of \eqref{E1}.

 Finally, if $g:M\to \mathbb R$ is a bounded viscosity solution of \eqref{E1},  according  to Theorem \ref{estacionarias}, necessarily $g=u$. This provides the uniqueness of the bounded viscosity solution of \eqref{E1} and finishes the proof.
\end{proof}

\begin{Rem}
Notice that a uniformly continuous  Hamiltonian $H:M\times \Real \to \Real$ of \eqref{E1} satisfies condition $(i)$ given in Corollary 
\ref{corolarioexistenciaFinsler} whenever $H(x,\cdot)$ is coercive for each $x\in M$, i.e. $\lim_{t\to \infty} H(x,t)=+\infty$ for each $x\in M$. Also a Hamiltonian $H$
of \eqref{E1} with property (A) satisfies condition $(ii)$ given in Corollary \ref{corolarioexistenciaFinsler} whenever $H$ is uniformly coercive in $M$, i.e. 
$\lim_{t\to \infty} H(x,t)=+\infty$ uniformly on $ M$.
\end{Rem}

\begin{Exs} Let us consider some examples regarding Corollary \ref{corolarioexistenciaFinsler}. Recall that $M$ is  a complete $C^1$ Finsler manifold modeled on a Banach space with a $C^1$ Lipschitz bump function.
\begin{enumerate}
\item[(1)] Let us consider the Hamilton-Jacobi equation
\begin{equation*}
u(x)+\min\{||du(x)||_x,a\}-\cos d(x_0,x)=0,
\end{equation*}
where $a>2$ is a fixed real number and $x_0$ is a fixed point in the Finsler manifold $M$.
The Hamiltonian
$H:M\times \mathbb R \to \mathbb R, \,H(x,t)=\min\{t,a\}-\cos d(x_0,x)$ is uniformly continuous.
Moreover,
$-1\le H(x,0)=-\cos d(x_0,x) \le 1$ for $ x\in M$,
and $\lim_{t\to  \infty}H(x,t)=a-\cos  d(x_0,x)\ge a-1>1$,
uniformly in $x\in M$. By Corollary \ref{corolarioexistenciaFinsler}, there is a unique bounded  viscosity solution $u$ such that $-1\le u\le 1$. Moreover, if $t\ge a$ then $H(x,t)>1$. Thus, every superdifferential of $u$ is bounded above by $a$ and $u$ is $a$-Lipschitz.

\item[(2)] Let us consider the Hamilton-Jacobi equation
\begin{equation*}
u(x)+||du(x)||_x-\cos d(x_0,x)=0.
\end{equation*}
The Hamiltonian
$
H(x,t)= t-\cos  d(x_0,x)
$
is uniformly continuous,
$
-1\le H(x,0)\le 1
$
for all $x\in M$ and
$
\lim_{t\to  \infty}H(x,t)=\infty
$
uniformly in $ M$.
By Corollary \ref{corolarioexistenciaFinsler},  there is a unique bounded
viscosity solution  $u$, which is locally Lipschitz and  $-1\le u\le 1 $.
Moreover, if $t>2$ then
 $H(x,t)>1$. Thus, the superdifferentials of $u$ are bounded by $2$ and $u$ is $2$-Lipschitz.

\item[(3)] For  $0<a<b$, let us consider the Hamilton-Jacobi equation
\begin{equation*}
u(x)+\min\{||du(x)||_x,1\}-\frac{a+d(x_0,x)}{b+d(x_0,x)}=0.
\end{equation*}
The Hamiltonian
$
H(x,t)=\min\{t,1\}-\frac{a+d(x_0,x)}{b+d(x_0,x)}
$
is uniformly continuous,
$
-1\le H(x,0)=-\frac{a+d(x_0,x)}{b+d(x_0,x)}\le -\frac{a}{b}
$
for all $x\in M$ and
$
\lim_{t\to \infty}H(x,t)=1-\frac{a+d(x_0,x)}{b+d(x_0,x)}>0
$
for every $x\in M$. By Corollary \ref{corolarioexistenciaFinsler}, there is a unique bounded viscosity solution $u$,
 which is locally Lipschitz and $\frac{a}{b}\le u\le 1$. Notice that, if $\min\{t,1\}>1-\frac{a}{b}$, then $H(x,t)>-\frac{a}{b}$. Therefore, the norm of the superdifferentials of $u$ are bounded above by $1-\frac{a}{b}$ and
thus $u$ is $(1-\frac{a}{b})$-Lipschitz.

\item[(4)] Let us consider the Hamilton-Jacobi equation
\begin{equation*}
u(x)+\frac{1+2||du(x)||_x}{1+||du(x)||_x+d(x_0,x)}=0.
\end{equation*}
The Hamiltonian
$H(x,t)=\frac{1+2|t|}{1+|t|+d(x_0,x)}$
is uniformly continuous. In addition,
$0\le H(x,0)=\frac{1}{1+d(x_0,x)}\le 1$ for all $x\in M$ and
$\lim_{t \to \infty}H(x,t)=\frac{1+2|t|}{1+|t|+d(x_0,x)}=2$ for  every $x\in M$.
Moreover,  it can be easily checked that for every $x\in M$, if $t>d(x_0,x)$ then $H(x,t)>1$.
Therefore, by Corollary \ref{corolarioexistenciaFinsler},
there is a unique bounded viscosity solution $u$, which is locally Lipschitz and $-1\le u\le 0$.
Moreover, for every $R>0$,  $u$ is $R$-Lipschitz in $B(x_0,\frac{R}{4})$.

\item[(5)]  A generalization of the  example (2) is the Hamilton-Jacobi equation
\begin{equation*}
u(x)+||du(x)||_x-f(x)=0,
\end{equation*}
where $f:M\to \mathbb R$ is uniformly continuous and bounded.
The Hamiltonian
$
H(x,t)= t-f(x)
$
is uniformly continuous,
$
K_0:=\inf_M f\le H(x,0)=f(x)\le\sup_M f:=K_1
$
for all $x\in M$ and
$
\lim_{t\to  \infty}H(x,t)=\infty
$
uniformly in $ M$.
By Corollary \ref{corolarioexistenciaFinsler},  there is a unique bounded
viscosity solution  $u$, which is locally Lipschitz and  $-K_1\le u\le K_0 $.
Moreover, if $t>K_1-K_0$ then
 $H(x,t)>K_1$. Thus, the superdifferentials of $u$ are bounded by $K_1-K_0$ and $u$ is $(K_1-K_0)$-Lipschitz.

\end{enumerate}

\end{Exs}

\section{A class of evolution Hamilton-Jacobi equations on Banach-Finsler manifolds}
Let $M$ be a {\bf complete}   ${C}^1$ Finsler manifold
modeled on a Banach space with a ${C}^1$ Lipschitz bump function.
Let us consider a  continuous function $H:
[0,\infty) \times M  \times \mathbb R  \to \mathbb{R}$ and
the  Hamilton-Jacobi equation
\begin{equation}\label{E3}\tag{E3}
\begin{cases}
u_t+H(t,x,\|u_x\|_x)=0, \quad (t>0)\\
u(0,x)=h(x),
\end{cases}
\end{equation}
where $u: [0,\infty)\times M\to\mathbb{R}$ and
$h: M\to\mathbb{R}$ is the initial condition which we assume to be bounded
and continuous.

\begin{Def}
Let us consider a function $u:[0,\infty)\times M\to\mathbb{R}$.
\begin{enumerate}
\item[(1)]  $u$ is a \textbf{viscosity subsolution of \eqref{E3}}
 if  $u$ is upper semicontinuous,
 $\alpha+H(t,x,\|\Delta\|_x)\leq 0$  for every
 $(\alpha,\Delta)\in D^+u(t,x)$ and $(t,x)\in \mathbb R^+\times M$ and $u(0,x)\leq h(x)$ for every
 $x\in M$.
 \item[(2)]  $u$ is a
  \textbf{viscosity supersolution of \eqref{E3}} if
   $u$ is lower semicontinuous, $\alpha+H(t,x,\|\Delta\|_x)\ge 0$  for every
 $(\alpha,\Delta)\in D^-u(t,x)$ and $(t,x)\in \mathbb R^+\times M$ and $u(0,x)\ge h(x)$ for every
 $x\in M$.
   \item[(3)] $u$ is a \textbf{viscosity solution of \eqref{E3}}
  if  $u$ is simultaneously a
  viscosity subsolution and a viscosity supersolution of \eqref{E3}.
  \end{enumerate}
\end{Def}
Let us consider the analogous
 condition (A) for Hamiltonians of \eqref{E3}.
\begin{Def} The  Hamiltonian $H$ of \eqref{E3} satisfies condition (A) whenever there are a constant $C\ge 0$ and a continuous function $\omega: [0,\infty)\times \mathbb R \times \mathbb R\to \mathbb R$  with $\omega(0,0,0)=0$ such that for any $t_1,t_2\in [0,\infty)$, $x_1,x_2\in M$ and  $r_1,r_2\in \mathbb R$,
\begin{equation*}
|H(t_1,x_1,r_1)-H(t_2,x_2,r_2)|\le \omega(|t_1-t_2|,d(x_1,x_2),r_1-r_2)
+C\max \big\{|r_1|,|r_2|\big\}\big(|t_1-t_2|+d(x_1,x_2)\big).
\end{equation*}
\end{Def}

\begin{Rem} Let us recall that every uniformly continuous Hamiltonian
 $H$ of \eqref{E3} satisfies condition (A). In addition, 
condition (A) implies that $H$ is uniformly continuous in 
$[0,\infty)\times M\times [-K,K]$ for every $K>0$.
\end{Rem}

In the next result we follow the ideas of  \cite{BZ}, \cite[Theorem 6.2]{DG},
\cite{AFLM2},  \cite{DEL}, \cite{EH} and \cite{IShort}
to obtain a generalization for  Finsler manifolds.

\begin{Teo} \label{comparisonevolution}
Let $M$ be a complete and  ${C}^1$ Finsler manifold modeled on a Banach space with a
${C}^1$ Lipschitz bump function and let $H\colon [0,\infty)\times  M \times \mathbb R \to \mathbb{R}$
 be the Hamiltonian of $\eqref{E3}$.
 Assume that $H$ verifies condition (A).
  If $u$ is a viscosity subsolution  and $v$ is a viscosity
  supersolution of \eqref{E3}, for every $T>0$ both functions are bounded in $[0,T)\times M$ and  for every $(t,x)\in (0,\infty) \times M$ either $u$ or $v$ is Lipschitz  in a neighborhood  of $(t,x)$, then
\begin{equation*}
\inf_{[0,\infty)\times M}(v-u)\ge 0.
\end{equation*}
\end{Teo}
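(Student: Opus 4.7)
The plan is to imitate the proof of Theorem \ref{estacionarias}, using the standard evolution perturbation $-\varepsilon/(T-t)$ to turn the subsolution into a strict one in the time variable. First I argue by contradiction: assuming $\inf_{[0,\infty)\times M}(v-u)<0$, and using that $u(0,\cdot)\le h\le v(0,\cdot)$ on $M$, I find $T>0$ and $\delta>0$ with $\inf_{[0,T)\times M}(v-u)<-\delta$. Fix $\varepsilon\in(0,\delta T/2)$ and set $\Psi(t,x):=v(t,x)-u(t,x)+\varepsilon/(T-t)$. Then $\Psi$ is lsc and bounded below on $[0,T)\times M$ (both $u,v$ are bounded there), blows up uniformly to $+\infty$ as $t\to T^-$, and still satisfies $\inf\Psi<-\delta/2$; extending $\Psi\equiv+\infty$ at $t=T$ yields a proper lsc function on the complete product Finsler manifold $[0,T]\times M$.

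Applying Ekeland's variational principle, for each small $\eta>0$ I obtain $(\bar t,\bar x)\in[0,T]\times M$ with $\Psi(\bar t,\bar x)<0$ such that $(t,x)\mapsto\Psi(t,x)+\eta\,d((t,x),(\bar t,\bar x))$ attains its global minimum at $(\bar t,\bar x)$. The boundary cases are excluded: $\bar t=0$ is ruled out by $\Psi(0,\cdot)\ge\varepsilon/T>0$ and $\bar t=T$ by $\Psi(T,\cdot)=+\infty$, so $\bar t\in(0,T)$, and the hypothesis guarantees that $u$ or $v$ (say $u$) is Lipschitz in a neighborhood of $(\bar t,\bar x)$. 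Working in a product chart at $(\bar t,\bar x)$, I would absorb the smooth term $\varepsilon/(T-t)$ via the classical sum rule for $C^1$ functions and then iterate Proposition \ref{fuzzy} (splitting $v$ from $(-u)+\eta\,d(\cdot,(\bar t,\bar x))$, and the latter into a sum of $-u$ and $\eta d$) to obtain nearby points $(t_1,x_1)$ and $(t_3,x_3)$ and functionals $(\alpha_1,\xi_1)\in D^-v(t_1,x_1)$ and $(-\beta_3,-\zeta_3)\in D^+u(t_3,x_3)$ such that, after applying Lemma \ref{desigualdades:BiLipschitz} to compare the dual norms at these points to the chart norm at $\bar x$,
\[
\alpha_1+\beta_3=-\frac{\varepsilon}{(T-\bar t)^2}+O(\eta),\qquad \|\xi_1+\zeta_3\|=O(\eta),
\]
while $\max\{\|\xi_1\|_{x_1},\|\zeta_3\|_{x_3}\}$ is uniformly bounded by the local Lipschitz constant of $u$ (plus $O(\eta)$) and $\max\{\|\xi_1\|,\|\zeta_3\|\}\cdot d(x_1,x_3)\to 0$ by property (4) of Proposition \ref{fuzzy}.

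Plugging into the sub/supersolution inequalities $-\beta_3+H(t_3,x_3,\|\zeta_3\|_{x_3})\le 0$ and $\alpha_1+H(t_1,x_1,\|\xi_1\|_{x_1})\ge 0$ and subtracting yields
\[
\frac{\varepsilon}{T^2}\le \frac{\varepsilon}{(T-\bar t)^2}\le O(\eta)+H(t_1,x_1,\|\xi_1\|_{x_1})-H(t_3,x_3,\|\zeta_3\|_{x_3}),
\]
and condition (A) bounds the last difference by
\[
\omega\bigl(|t_1-t_3|,d(x_1,x_3),\|\xi_1\|_{x_1}-\|\zeta_3\|_{x_3}\bigr)+C\max\{\|\xi_1\|_{x_1},\|\zeta_3\|_{x_3}\}\,\bigl(|t_1-t_3|+d(x_1,x_3)\bigr).
\]
Combining the boundedness of the norms with the fuzzy-rule conclusions $|t_1-t_3|\to 0$, $d(x_1,x_3)\to 0$ and $\|\xi_1\|_{x_1}-\|\zeta_3\|_{x_3}\to 0$, the right-hand side tends to $0$ as $\eta\to 0$, contradicting the positive lower bound $\varepsilon/T^2$ and finishing the proof.

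\textbf{Main obstacle.} The principal difficulty will be iterating the two-function fuzzy rule of Proposition \ref{fuzzy} on the product manifold $[0,T)\times M$: setting up a product chart whose time component is the identity of $\mathbb R$ and whose spatial component is a Palais chart of $M$; absorbing the smooth time correction $\varepsilon/(T-t)$ via the classical sum rule without losing control of the error; and carefully tracking the Palais factors $(1+\overline\varepsilon)$ when converting the chart-dual norms at $(t_1,x_1)$ and $(t_3,x_3)$ back to the reference norm at $(\bar t,\bar x)$, keeping the scalar time components $\alpha_1,\beta_3$ isolated from the spatial components $\xi_1,\zeta_3$. This parallels the bookkeeping performed in Theorem \ref{estacionarias}, but the additional time variable and the nonautonomous Hamiltonian add a layer of complexity.
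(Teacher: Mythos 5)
Your proposal is correct and runs on exactly the same machinery as the paper's proof: Ekeland's variational principle on the product manifold, the fuzzy sum rule of Proposition \ref{fuzzy} in a product Palais chart, the $(1+\overline{\varepsilon})$ norm bookkeeping with $\overline{\varepsilon}$ tied to the local Lipschitz constant, and condition (A) to make the Hamiltonian difference vanish. The one genuine structural difference is where the strict margin for the final contradiction comes from. The paper replaces $u$ by $u_\delta=u-\delta/(T-t)$ only to push the near-minimum of $v-u_\delta$ away from $t=T$, truncates to an interval $[0,T']$, and then adds a separate linear term $\eta t$ whose time derivative $\eta>0$ survives as the contradiction $-\eta\ge 0$ once the Ekeland parameter tends to $0$. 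You let the single penalization $\varepsilon/(T-t)$ do both jobs: it blows up at $t=T$ (excluding the parabolic boundary) and its time derivative $\varepsilon/(T-\overline{t})^2\ge\varepsilon/T^2>0$ supplies a fixed positive lower bound that the vanishing right-hand side cannot match as $\eta\to 0$; this removes the auxiliary $T'$ and the $\eta t$ term and is slightly more economical. Two small points to repair when writing it out: (i) the condition $\varepsilon<\delta T/2$ does not by itself give $\inf\Psi<-\delta/2$, since the point $(t_0,x_0)$ witnessing $(v-u)(t_0,x_0)<-\delta$ may have $t_0$ close to $T$, where $\varepsilon/(T-t_0)$ is large; fix such a point first and require $\varepsilon<\delta(T-t_0)/2$; (ii) the bound on $\max\{\|\xi_1\|_{x_1},\|\zeta_3\|_{x_3}\}$ by the local Lipschitz constant of $u$ is not uniform in $\eta$ (the constant depends on $(\overline{t},\overline{x})$, which moves with $\eta$) --- but, as your outline already indicates, uniformity is never needed: property (4) of Proposition \ref{fuzzy} controls the term $C\max\{\cdot,\cdot\}\bigl(|t_1-t_3|+d(x_1,x_3)\bigr)$ directly, and the choice $\overline{\varepsilon}\le\eta K_{(\overline{t},\overline{x})}^{-1}$ gives $\|\xi_1\|_{x_1}-\|\zeta_3\|_{x_3}\to 0$, exactly as in Theorem \ref{estacionarias}.
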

\begin{proof}
Assume, by contradiction, that
there is $(a,z)\in (0,\infty)\times M$ such that $v(a,z)-u(a,z)<0$.
Let us fix  $T>0$ large enough so that $\inf_{[0,T)\times M}(v-u)< 0$.
For $\delta>0$, let us set
\begin{equation*}
u_\delta(t,x)=u(t,x)-\frac{\delta}{T-t}, \qquad
(t,x)\in [0,T)\times M.
\end{equation*}
It is easy to check that $u_\delta$ is a viscosity subsolution and $v$ is a viscosity supersolution of
\begin{equation*}
u_{t}(t,x)+H(t,x,||u_{x}(t,x)||_x)=0, \qquad (t,x)\in[0,T)\times M
\end{equation*}
with initial condition $u_{\delta}(0,x) +\frac{\delta}{T}\le h(x) \le v(0,x)$ for $x\in M$. Let us fix  $\delta>0$ small enough so that $\inf_{(0,T)\times M}(v-u_\delta)<0<\frac{\delta}{T}\le\inf_{\{0\}\times M}(v-u_\delta).$
Moreover, the boundedness of $v-u$ in $[0,T]$ yields the existence of  $0<T'<T$  such that
\begin{equation*}\inf_{(0,T')\times M}(v-u_\delta)<0<\inf_{\{0,T'\}\times M} (v-u_\delta).
\end{equation*}
Thus, we may assume  $u\equiv u_\delta$  and $v$ are a viscosity subsolution and a viscosity supersolution respectively  of
\begin{equation*}
u_{t}(t,x)+H(t,x,||u_x(t,x)||_x)=0, \qquad (t,x)\in[0,T')\times M
\end{equation*}
with initial condition
\begin{equation*}
u(0,x) +\frac{\delta}{T}\le h(x) \le v(0,x), \quad  x\in M,
\end{equation*}
where
$u$ and $v$ are bounded in $[0,T']\times M$,  for every $(t,x)\in (0,T') \times M$ either $u$ or $v$ is Lipschitz  in a neighborhood  of $(t,x)$ and
\begin{equation}\label{cotas}
\inf_{(0,T')\times M}(v-u)<0<\inf_{\{0,T'\}\times M} (v-u)
\end{equation}

Let us fix $\eta>0$  small enough so that
 $\varphi:\mathbb{R}\times M\to \mathbb R$ defined as
\begin{equation*}
\varphi(t,x)=\begin{cases} v(t,x)-u(t,x)+\eta t,
&\text{ if }(t,x)\in [0,T']\times M,\\
 \infty,&\text{ otherwise. }\end{cases}
 \end{equation*}
verifies
\begin{equation}\label{acotacion}
\inf_{(0,T')\times M}\varphi < 0 \quad \text{ and } \inf_{\{0,T'\}\times M}\varphi >0.
\end{equation}

Since $v$ and $-u$ are lower semicontinuous in $[0,T)\times M$ and bounded in $[0,T']\times M$, the function
 $\varphi$ is  lower semicontinuous and bounded below.
Therefore, we can apply the Ekeland variational principle to $\varphi$ and any $\varepsilon>0$ (in the complete metric space $\mathbb R\times M$ with associated distance $D((r,y),(s,z))=|r-s|+d(y,z))$ in order to find
$(\overline{t},\overline{x}) \in [0,T']\times M$ such that
\begin{equation*}
\varphi (\overline{t},\overline{x})<0
\end{equation*}
and
\begin{equation*}
\varphi(t,x)\ge \varphi (\overline{t},\overline{x})-\varepsilon (|t-\overline{t}|+
d(x,\overline{x})), \, \, \text{ for all } (t,x)\in \mathbb R \times M.
\end{equation*}

\noindent
Thus $\varphi(t,x)+\varepsilon (|t-\overline{t}|+
d(x,\overline{x}))$ attains the minimum at $(\overline{t},\overline{x})$ and then
$0\in D^-\big(\varphi+\varepsilon (|\cdot-\overline{t}|+
d(\cdot,\overline{x}))\big)(\overline{t},\overline{x})$.
The boundedness conditions given in \eqref{acotacion}  yield $\overline{t}\in (0,T')$.

\medskip

 By assumption, let us assume that there is an open subset $(a,b)\times A\subset (0,T')\times M$ with $(\overline{t},\overline{x})\in (a,b)\times A$ and a constant
$K_{(\overline{t},\overline{x})}>0$
such that  $v$ is $K_{(\overline{t},\overline{x})}$--Lipschitz 
in  $(a,b)\times A$ (the other case is analogous).
Let $(U,\varphi)$ be a chart with $\overline{x}\in U\subset A$ satisfying the Palais
condition for $1+\overline{\varepsilon}$, where
$\overline{\varepsilon}=\min\{\varepsilon, \varepsilon K_{(\overline{t},\overline{x})}^{-1}\}$.

The set $(0,T')\times M$ is a Finsler manifold with the same smoothness properties as $M$, i.e. $(0,T')\times M$ is a $C^1$ Finsler manifold modeled over a Banach space with a $C^1$  Lipschitz  bump function.
Moreover, if $(U,\varphi)$ is the above chart  in $M$ with  $\overline{x}\in U$ satisfying the Palais
condition
for $1+\overline{\varepsilon}$, then    $(V, \phi)$  with
$V= (a,b)\times U$ and $\phi(t,x)=(t,\varphi(x))$ is a chart in $(0,T')\times M$
with $(\overline{t}, \overline{x})\in V$. In addition, this chart satisfies the Palais condition
for $1+\overline{\varepsilon}$ for the norms in the tangent space $T_{(t,x)}((0,T')\times M)$ defined
as  $||(r,v)||_{(t,x)}=|r|+||v||_{x}$. Notice that, in this case, the dual norm
in  $T_{(t,x)}((0,T')\times M)^*$ is $||(s,\Lambda)||_{(t,x)}^*=
\max\{|s|,||\Lambda||_{x}^*\}$. 

Let us recall that  there is a Lipschitz extension $\tilde{v}:\mathbb R \times M \to \mathbb R$
of the restriction $v|_V$,  there is   a lower semicontinuous extension $\tilde{u}:\mathbb R\times M\to \mathbb R$ of the function $-u:[0,T']\times M\to \mathbb R$ 
and $g(t,x)=\eta t + \varepsilon(|t-\overline{t}|+
d(x,\overline{x}))$ is Lipschitz in $\mathbb R\times M$.
Thus, by applying Proposition \ref{fuzzy} (the fuzzy rule for the subdifferential of the sum) to $\tilde{v}-\tilde{u}+g$, we  find $t_1,\,t_2, \, t_3 \in
(a,b)$, $x_1,x_2,x_3\in U$, $(\alpha_1,\Delta_1)\in D^{-}v(t_1,x_1)$,
 $(\alpha_2,\Delta_2)\in D^{-}(-u)(t_2,x_2)$  and $(\alpha_3,\Delta_3)\in D^{-}g(t_3,x_3)$ such that
\begin{enumerate}
\item[(i)] $|t_i-\overline{t}|<\varepsilon$ and $d(x_i,\overline{x})
<\varepsilon$ for $i=1,2,3$,
\item[(ii)] $|v(t_1,x_1)-v(\overline{t},\overline{x})|<\varepsilon$,
	$|u(t_2,x_2)-u(\overline{t},\overline{x})|<\varepsilon$ and
$|g(t_3,x_3)-g(\overline{t},\overline{x})|<\varepsilon$,
\item[(iii)]  $|\alpha_1+\alpha_2+\alpha_3|<\varepsilon$ and
$|||\Delta_1\circ d\varphi(x_1)^{-1}
+\Delta_2\circ d\varphi(x_2)^{-1}+\Delta_3\circ d
\varphi(x_3)^{-1}|||_{\overline{x}}<\varepsilon$, where $|||\cdot|||_{\overline{x}}$ is defined
as in the proof of Theorem \ref{estacionarias}, i.e. $|||w|||_{\overline{x}}=
||d\varphi^{-1}(\varphi(\overline{x}))(w)||_{\overline{x}} $ for $w\in X$, and
\item[(iv)] $\max\big\{|||\Delta_1\circ d\varphi(x_1)^{-1}|||_{\overline{x}}\,,\, 
|||\Delta_2\circ d\varphi(x_2)^{-1}|||_{\overline{x}}\big\}
\big(|t_1-t_2|+d(x_1,x_2)\big)<\varepsilon$.
\end{enumerate}
Let us write $\Lambda_1:=\Delta_1\in D^{-}_xv(t_1,x_1)=\pi_2(D^{-}v(t_1,x_1))$ where $\pi_2:\mathbb R\times TM^*\to TM^*$ is the canonical proyection over $TM^*$,
$\Lambda_2=-\Delta_2\in D^{+}_xu(t_2,x_2)=\pi_2( D^{+}_xu(t_2,x_2))$ and $\Lambda_3=\Delta_3\in D^{-}_xg(t_3,x_3)=
D^{-}(\varepsilon d(\cdot,\overline{x}))(x_3)$.
Notice that $(-\alpha_2, \Lambda_2)\in  D^{+}u(t_2,x_2) $.

\noindent The second inequality in (iii) yields
\begin{equation*}
\Big|\tresp{\Lambda_1\circ d\varphi(x_1)^{-1}}_{\overline{x}}
-\tresp{\Lambda_2\circ d\varphi(x_2)^{-1}}_{\overline{x}}\Big|\leq
\tresp{\Lambda_3\circ d\varphi(x_3)^{-1}}_{\overline{x}}
+\varepsilon.
\end{equation*}
The  function $\varepsilon d(\cdot,\overline{x})$ is $\varepsilon$-Lipschitz and thus
$||\Lambda_3||_{x_3}\le \varepsilon$.
Therefore,
\begin{equation*}
\tresp{\Lambda_3\circ d\varphi(x_3)^{-1}}_{\overline{x}}
\leq \|\Lambda_3\|_{x_3}\tresp{d\varphi(
x_3)^{-1}}_{\overline{x},x_3}=\|\Lambda_3\|_{x_3}(1+\varepsilon)<\varepsilon(1+\varepsilon),
\end{equation*}
where the norms $|||\cdot|||_{x,\overline{x}}$ and $|||\cdot|||_{\overline{x},x}$ for $x\in U$
are defined as in Theorem \ref{estacionarias}.
Also,
\begin{equation*}
\tresp{\Lambda_2\circ d\varphi(x_2)^{-1}}_{\overline{x}}
\geq \|\Lambda_2\|_{x_2}|||d\varphi(x_2)|||_{x_2,\overline{x}}^{-1}
\geq \|\Lambda_2\|_{x_2}(1+\overline{\varepsilon})^{-1}
\end{equation*}
and
\begin{equation*}
\tresp{\Lambda_1\circ d\varphi(x_1)^{-1}}_{\overline{x}}
\leq \|\Lambda_1\|_{x_1}\tresp{d\varphi(x_1)^{-1}}_{\overline{x},x_1}
\leq \|\Lambda_1\|_{x_1}(1+\overline{\varepsilon}).
\end{equation*}
Therefore,
\begin{equation*}
\|\Lambda_2\|_{x_2}(1+\overline{\varepsilon})^{-1}
-\|\Lambda_1\|_{x_1}(1+\overline{\varepsilon})<\varepsilon(2+\varepsilon).
\end{equation*}
Since $v$ is $K_{(\overline{t},\overline{x})}$ --Lipschitz in $V$, then
$||\Lambda_1||_{x_1}\le K_{(\overline{t},\overline{x})}$ and, by computing, we obtain
\begin{align*}
\|\Lambda_2\|_{x_2}- \|\Lambda_1\|_{x_1} &<\varepsilon(2+\varepsilon)
+\overline{\varepsilon}\|\Lambda_1\|_{x_1}
+\frac{\overline{\varepsilon}}{1+\overline{\varepsilon}} \|\Lambda_2\|_{x_2}
 \\ &\le \varepsilon(2+\varepsilon)
+ \overline{\varepsilon}\|\Lambda_1\|_{x_1}
+\overline{\varepsilon}\bigl( \varepsilon(2+\varepsilon)+(1+\overline{\varepsilon})
 \|\Lambda_1\|_{x_1}\bigr) \\
 &\le   \varepsilon(4+4\varepsilon+\varepsilon^2).
\end{align*}
In an analogous way we obtain $\|\Lambda_1\|_{x_1}- \|\Lambda_2\|_{x_2}
< \varepsilon(4+4\varepsilon+\varepsilon^2)$. 

Now, since $u$ is a viscosity subsolution and $v$ is a viscosity
supersolution of \eqref{E3} and the fact that $(\alpha_1,\Lambda_1)\in D^-{v}(t_1,x_1)$,
$(-\alpha_2,\Lambda_2)\in D^+{u}(t_2,x_2)$, we have
\begin{align*}
\alpha_1+H(t_1,x_1,\|\Lambda_1\|_{x_1})&\ge 0,\\
-\alpha_2+H(t_2,x_2,\|\Lambda_2\|_{x_2})&\le 0.
\end{align*}
Thus
\begin{equation*}
\alpha_1+\alpha_2+H(t_1,x_1,\|\Lambda_1\|_{x_1})-H(t_2,x_2,\|\Lambda_2\|_{x_2})\ge 0.
\end{equation*}
From condition (iii), we obtain
\begin{equation*}
-\alpha_3+\varepsilon + H(t_1,x_1,\|\Lambda_1\|_{x_1})-H(t_2,x_2,\|\Lambda_2\|_{x_2})\ge 0.
\end{equation*}
Since $\alpha_3\in D^-(\eta t+\varepsilon |t-\overline{t}|)(t_3)$, we have that
$\eta-\varepsilon\le \alpha_3\le \eta+\varepsilon$ and thus
\begin{equation} \label{HH}
-\eta+2\varepsilon + H(t_1,x_1,\|\Lambda_1\|_{x_1})-H(t_2,x_2,\|\Lambda_2\|_{x_2})\ge 0.
\end{equation}
Therefore, 
\begin{align*} 
-\eta \ge -2\varepsilon & - H(t_1,x_1,\|\Lambda_1\|_{x_1})+H(t_2,x_2,\|\Lambda_2\|_{x_2})
\\
  \ge -2\varepsilon  &
-\omega\big(|t_1-t_2|, d(x_1,x_2), \|\Lambda_1\|_{x_1}-
\|\Lambda_2\|_{x_2}\big) \\
&-C\max\big\{\|\Lambda_1\|_{x_1}\,,\|\Lambda_2\|_{x_2}\big\}\big(|t_1-t_2|+d(x_1,x_2)\big)
\\ \ge -2\varepsilon  &
-\omega\big(|t_1-t_2|, d(x_1,x_2), \|\Lambda_1\|_{x_1}-
\|\Lambda_2\|_{x_2}\big) \\
&-C(1+\varepsilon)\max\big\{|||\Lambda_1\circ d\varphi(x_1)^{-1}|||_{\overline{x}}\,,|||\Lambda_2 \circ d\varphi(x_2)^{-1}|||_{\overline{x}}\big\}\big(|t_1-t_2|+d(x_1,x_2)\big),
\end{align*}
where $\omega$ is the  function and $C\ge 0$ is the constant given in condition (A) for $H$. In addition,  $|t_1-t_2|<2\varepsilon$ and $d(x_1,x_2)<2\varepsilon$. Now, from the continuity of $\omega$ and condition (iv), we obtain  $H(t_1,x_1,\|\Lambda_1\|_{x_1})
 -H(t_2,x_2,\|\Lambda_2\|_{x_2})\to 0$ as $\varepsilon\to 0$ and thus  $-\eta\ge 0$, which is a contradiction.
\end{proof}

\begin{Rem} \begin{enumerate} \item  We say that a function $f:(0,\infty)\times M\to \mathbb R$ is 
$L$-Lipschitz in the second variable if
$|f(t,y)-f(t,z)| \le L d(y,z)$,  for all $(t,y), (t,z) \in (0,\infty)\times M$. 
 The assumptions on $u$ and $v$ in Theorem \ref{comparisonevolution} can be weakened in the following way:   $u$ is a viscosity subsolution of \eqref{E3} and $v$ is a viscosity
  supersolution of \eqref{E3}, for every $T>0$ both functions are bounded in $[0,T)\times M$,  for every $(t,x)\in (0,\infty) \times M$ either $u$ or $v$ is uniformly continuous in a neighborhood  of $(t,x)$, and finally  for every $(t,x)\in (0,\infty) \times M$  either $u$ or $v$ is Lipschitz in the second variable in a neighborhood  of $(t,x)$.

\item Let us  assume in the hypothesis of Theorem \ref{comparisonevolution} the additional condition: there is   $L>0$ such that either $u$ or $v$ is $L$-Lipschitz in the second variable in $(0,\infty)\times M$. Then,
 it is enough to assume that the Hamiltonian $H$ is uniformly continuous in $[0,\infty)\times M\times [0,R]$ for some $R>L$. 

\noindent
Let us consider the example $H:[0,\infty)\times M\times \mathbb R\to \mathbb R$, \,$H(t,x,m)=r(t,x)m$, where $r:[0,\infty)\times M\to \mathbb R$ is  a bounded and  uniformly continuous  function and the associated Hamilton-Jacobi equation
\begin{equation}\label{E3*}\tag{E3*}
\begin{cases}u_t(t,x)+r(t,x)||u_x(t,x)||_x=0, &   (t,x)\in (0,\infty)\times M,
\\u(0,x)=h(x), & x\in M.\end{cases}
\end{equation} The Hamiltonian $H$ is uniformly continuous in $[0,\infty)\times 
M \times[0,R]$ for every $R>0$. Let us denote by $\mathcal L$ the family of locally uniformly continuous functions $u:[0,\infty)\times M\to \mathbb R$ which are Lipschitz  in the second variable in $(0,\infty)\times M$
and bounded in $[0,T)\times M$ for every $T>0$.
Then, there is at most one function within $\mathcal L$ which is a viscosity solution of  equation \eqref{E3*}.

\item  It is worth noticing that Theorem \ref{comparisonevolution} holds (with few modifications in the proof) for the weaker condition (*) for $H$ given in \cite{DG} (see Remark \ref{remarksestacionarias}): a Hamiltonian $H$ of \eqref{E3}
verifies condition (*) if
\begin{equation*}
|H(t_1,x_1,r)-H(t_2,x_2,r)|\to 0 \text{ as }  (d(x_1,x_2)+|t_1-t_2|)(1+|r|) \to 0 
 \text{ uniformly on } t_1,t_2,r \in \mathbb  R, \, x_1, x_2 \in M,
\end{equation*}
\begin{equation*}
|H(t,x,r_1)-H(t,x,r_2)|\to 0 \text{ \ \ \ as \  \ \ }  |r_1-r_2|\to 0  \text{  \ \ \  uniformly on \  } 
 x \in M, \, t,r_1, r_2 \in \mathbb R.
\end{equation*}
\end{enumerate}
\end{Rem}


A few modifications of Theorem \ref{comparisonevolution} yield the following result on the monotonicity of the viscosity solutions.


\begin{Pro}\label{evolution-monotony}
Let $M$ be a complete  ${C}^1$ Finsler manifold modeled on a Banach space with
a $C^1$ Lipschitz bump function
 and let $H_1,\, H_2: M\times \mathbb R\to \mathbb{R}$ be two 
Hamiltonians of \eqref{E3} verifying condition (A) such that $H_1 \le H_2$.
Let us assume that  $v$ is a viscosity supersolution of \eqref{E3} with Hamiltonian $H_1$ and initial condition $v(0,x)=h_1(x)$ (for $x\in M$) and $u$ is a viscosity subsolution of \eqref{E3} with Hamiltonian $H_2$ and initial condition $u(0,x)=h_2(x)$ (for $x\in M$), where $h_1$ and $h_2$ are bounded and continuous on $M$,  and  $h_2\le h_1$. In addition, let us assume that for every $T>0$ the functions $u$ and $v$ are bounded in $[0,T)\times M$  and  for every $(t,x)\in (0,\infty) \times M$ either $u$ or $v$ is Lipschitz  in a neighborhood  of $(t,x)$. Then,
\begin{equation*}
\sup_{[0,\infty)\times M}(u-v)\le \sup_{[0,\infty)\times M}(H_2-H_1) + \sup_M\,(h_2-h_1).
\end{equation*}
\end{Pro}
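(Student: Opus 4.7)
The plan is to reduce the inequality to the comparison Theorem \ref{comparisonevolution} via a constant translation. Put $A:=\sup_{[0,\infty)\times M}(H_2-H_1)$ and $B:=\sup_M(h_2-h_1)$; the inequality is trivial if $A=+\infty$, so I assume $A<\infty$ (and $B$ is finite since $h_1,h_2$ are bounded). By the hypotheses $H_1\le H_2$ and $h_2\le h_1$, $A\ge 0$ and $B\le 0$. Define the shifted function $\tilde{u}(t,x):=u(t,x)-A-B$ on $[0,\infty)\times M$.

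The first step is to check that $\tilde{u}$ is a viscosity subsolution of \eqref{E3} with Hamiltonian $H_1$ and initial condition $h_1$. Upper semicontinuity is inherited from $u$, and $D^+\tilde{u}(t,x)=D^+u(t,x)$ since $\tilde{u}$ and $u$ differ by a constant. For any $(\alpha,\Delta)\in D^+u(t,x)$, the subsolution property of $u$ for $H_2$ gives $\alpha+H_2(t,x,\|\Delta\|_x)\le 0$, and the pointwise bound $H_1\le H_2$ then yields $\alpha+H_1(t,x,\|\Delta\|_x)\le 0$. For the initial value, the definition of $B$ gives $h_2(x)-h_1(x)\le B$, so together with $A\ge 0$ we obtain $\tilde{u}(0,x)=h_2(x)-A-B\le h_1(x)=v(0,x)$.

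Next I apply Theorem \ref{comparisonevolution} to the pair $(\tilde{u},v)$ for the equation \eqref{E3} with Hamiltonian $H_1$ (which satisfies condition (A) by hypothesis) and common initial condition $h_1$. Both $\tilde{u}$ and $v$ are bounded on $[0,T)\times M$ for every $T>0$ (since $u$ and $v$ are), and the local Lipschitz condition at each $(t,x)\in(0,\infty)\times M$ for either $\tilde{u}$ (equivalently $u$) or $v$ is inherited from the corresponding hypothesis. The theorem therefore gives $\inf_{[0,\infty)\times M}(v-\tilde{u})\ge 0$, i.e., $u-v\le A+B$ everywhere on $[0,\infty)\times M$, which is the claimed estimate after taking the supremum.

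Because this is a one-step reduction to the comparison theorem, I do not expect any genuine technical obstacle. The only delicate point is the choice of the shift $-A-B$: the subsolution inequality for the new Hamiltonian $H_1$ uses only $H_1\le H_2$ and is preserved by any constant shift, while the initial comparison $\tilde{u}(0,\cdot)\le h_1$ consumes the full constant $A+B$. With both verifications in place, Theorem \ref{comparisonevolution} applies verbatim.
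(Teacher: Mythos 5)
Your proof is correct, and it takes a genuinely different route from the paper's. You reduce the statement to Theorem \ref{comparisonevolution} used as a black box: since equation \eqref{E3} contains no zeroth-order term, the constant shift $\tilde u=u-A-B$ leaves the superdifferentials and the subsolution inequality unchanged, the hypothesis $H_1\le H_2$ lets you replace $H_2$ by $H_1$ in that inequality, and the shift absorbs the initial-data discrepancy so that $\tilde u(0,\cdot)\le h_1$; all remaining hypotheses of Theorem \ref{comparisonevolution} (boundedness on each $[0,T)\times M$, the local Lipschitz alternative) are invariant under constant shifts, so the theorem applies verbatim and yields $u-v\le A+B$. The paper instead reopens the proof of Theorem \ref{comparisonevolution}: it argues by contradiction, shifts by $i=\inf_M(h_1-h_2)$, reruns the Ekeland/fuzzy-rule machinery, and splits $H_1(t_1,x_1,\cdot)-H_2(t_2,x_2,\cdot)$ inside the key inequality \eqref{HH} so as to extract the term $\sup(H_1-H_2)$. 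The paper's template is what one would need for a genuine stability estimate with no ordering between the two Hamiltonians (as in Proposition \ref{estabilidad1} for the stationary case); your reduction is shorter but leans essentially on $H_1\le H_2$. Incidentally, your own verification shows the $-A$ part of the shift is never needed (it only makes the initial comparison easier, since $A\ge 0$), so the same argument with $\tilde u=u-B$ gives the sharper conclusion $\sup_{[0,\infty)\times M}(u-v)\le\sup_M(h_2-h_1)$; under the stated hypotheses the term $\sup_{[0,\infty)\times M}(H_2-H_1)$ on the right-hand side is in fact redundant.
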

Let us give an outline of  the proof of Proposition \ref{evolution-monotony} for completeness. Let us assume, by contradiction,  that 
\begin{equation*}
\inf_{[0,\infty)\times M}(v-u)<  \inf_{[0,\infty)\times M}(H_1-H_2) + \inf_M\,(h_1-h_2).
\end{equation*}
Let us consider the function $v-u-i$,  where $i:= \inf_M\,(h_1-h_2)$. Then  $\inf_{[0,\infty)\times M}(v-u-i)<  \inf_{[0,\infty)\times M}(H_1-H_2)\le 0$.  Notice that $(v-u-i)(0,x)\ge 0$ for all $x\in M$.
We can obtain  analogous inequalities for this function to the one given in \eqref{cotas} 
for $v-u$. In particular, equation \eqref{HH} becomes 
\begin{align*}
0 &\le-\eta+2\varepsilon + H_1(t_1,x_1,\|\Lambda_1\|_{x_1})-H_2(t_2,x_2,\|\Lambda_2\|_{x_2}) \\
&  = -\eta + 2\varepsilon + H_1(t_1,x_1,\|\Lambda_1\|_{x_1})- H_1(t_2,x_2,\|\Lambda_2\|_{x_2})  \\
& \qquad  +  H_1(t_2,x_2,\|\Lambda_2\|_{x_2}) - H_2(t_2,x_2,\|\Lambda_2\|_{x_2}) \\
& \le -\eta + 2\varepsilon + H_1(t_1,x_1,\|\Lambda_1\|_{x_1})- H_1(t_2,x_2,\|\Lambda_2\|_{x_2})  \\
& \qquad  +  \sup_{[0,\infty)\times M} (H_1 - H_2), 
\end{align*}
where $|t_1-t_2|<2\varepsilon$,\, $d(x_1,x_2)<2\varepsilon$, \,  $\left| \,\|\Lambda_1\|_{x_1}- \|\Lambda_2\|_{x_2}\right|
< \varepsilon(4+4\varepsilon+\varepsilon^2)$ and $C\max\big\{\|\Lambda_1\|_{x_1}\,,\|\Lambda_2\|_{x_2}\big\}\big(|t_1-t_2|+d(x_1,x_2)\big)<\varepsilon (1+\varepsilon)$. By letting $\varepsilon\to 0$,
property (A) for $H_1$ yields
$0\le -\eta +\sup_{[0,\infty)\times M} (H_1 - H_2)$, 
which is a contradiction because $\eta>0$ and $H_1\le H_2$.

\bigskip

Finally, let us give  existence results of viscosity subsolutions, supersolutions and solutions for Hamilton-Jacobi equations of the form \eqref{E3}. The proofs are analogous to those given in the preceding section. The first one is a straightforward consequence of Proposition
\ref{familias de subsoluciones}.

\begin{Cor}\label{existenceevolution1} Let $M$ be a   ${C}^1$ Finsler manifold modeled on a Banach space with
a $C^1$ Lipschitz bump function and an open subset  $\Omega$  of $M$. Let us consider the ${C}^1$ Finsler manifold $N=(0,\infty)\times M$ and the open subset $A=(0,\infty)\times \Omega$ of $N$, a continuous Hamiltonian 
$F:TA^* \times \mathbb R\to \mathbb R$ and a continuous function $h:\Omega \to \mathbb R$. Consider the Hamilton-Jacobi equation
\begin{equation}\label{HJEVOL}\tag{E4}
\begin{cases}F(t,x,u_t(t,x),u_x(t,x), u(t,x))=0, \quad (t,x)\in A,& \\
u(0,x)=h(x), \quad x\in \Omega.& \end{cases}
\end{equation}
 Assume that there are
 continuous functions $s_0,s_1: [0,\infty)\times \Omega \to \mathbb R$ with $s_0\le s_1$ and $s_0(0,x)=s_1(0,x)=h(x)$ for all $x\in \Omega$ such that $s_0$ and $s_1$ are respectively a viscosity subsolution and a viscosity supersolution of \eqref{HJEVOL}.
Let us consider the family
\begin{equation*}
\mathcal F =\{w:[0,\infty)\times \Omega \to \mathbb R: \, s_0\le w\le s_1 \text{ and }
w \text{ is a viscosity subsolution of } \eqref{HJEVOL} \}.
\end{equation*}
Let us define $u=\sup \mathcal F$. Then,   $u^*$ is a viscosity subsolution of \eqref{HJEVOL}
and $(u^*)_*$ is a viscosity supersolution of \eqref{HJEVOL}.
\end{Cor}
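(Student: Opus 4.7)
The plan is to reduce the statement to Proposition \ref{familias de subsoluciones}, applied not to $M$ but to the ambient Finsler manifold $N=(0,\infty)\times M$ and its open subset $A=(0,\infty)\times\Omega$. To justify this, I would first observe that $N$ is a $C^1$ Finsler manifold modelled on the Banach space $\mathbb R\times X$ (where $X$ is the Banach space that models $M$); since $X$ admits a $C^1$ Lipschitz bump function, so does $\mathbb R\times X$ (take the tensor product with a $C^1$ Lipschitz bump on $\mathbb R$), so $N$ sits in the framework of the preceding section. The Finsler norm on $N$ is the product norm $\|(r,v)\|_{(t,x)}=|r|+\|v\|_x$ already used in Theorem \ref{comparisonevolution}, whose dual $\|(\alpha,\Delta)\|^*_{(t,x)}=\max\{|\alpha|,\|\Delta\|_x\}$ corresponds exactly to the subdifferential element $(\alpha,\Delta)$ appearing in the viscosity inequalities for \eqref{HJEVOL}.

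Under the product-chart identification $du=(u_t,u_x)$, the interior equation $F(t,x,u_t,u_x,u)=0$ on $A$ has the same abstract form as the equation (E2) treated in Proposition \ref{familias de subsoluciones}, and the restrictions $s_0|_A$, $s_1|_A$ are continuous viscosity sub- and supersolutions of this interior equation with $s_0|_A\le s_1|_A$. Applying Proposition \ref{familias de subsoluciones} to the family $\{w|_A : w\in\mathcal F\}$ on the open subset $A$ of $N$ yields at once that $u^*$ is a viscosity subsolution and $(u^*)_*$ is a viscosity supersolution of the interior PDE on $A$, together with the sandwich $s_0\le (u^*)_*\le u^*\le s_1$ on $A$. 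Since $A$ is open in $[0,\infty)\times\Omega$, the semicontinuous envelopes computed within $A$ agree at interior points with those computed within $[0,\infty)\times\Omega$.

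It remains to handle the initial condition at $t=0$. Observe that $s_0$ belongs to $\mathcal F$ (it is a subsolution trivially sandwiched between $s_0$ and $s_1$), hence $s_0\le u\le s_1$ on all of $[0,\infty)\times\Omega$. Using the continuity of $s_0$ and $s_1$,
\[
s_0(t,x)=(s_0)_*(t,x)\le (u^*)_*(t,x)\le u^*(t,x)\le (s_1)^*(t,x)=s_1(t,x)
\]
for every $(t,x)\in[0,\infty)\times\Omega$. Evaluating at $t=0$ where the outer terms both equal $h(x)$ forces $(u^*)_*(0,x)=u^*(0,x)=h(x)$; in particular $u^*(0,x)\le h(x)$ and $(u^*)_*(0,x)\ge h(x)$, which are exactly the initial conditions built into the definition of a viscosity subsolution, respectively supersolution, of \eqref{HJEVOL}.

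The only point requiring care is checking that the product Finsler structure on $N$ actually preserves the tools invoked by Proposition \ref{familias de subsoluciones} (product $C^1$ Lipschitz bump, compatibility of the viscosity sub/superdifferential notions via the product chart with the pair $(\alpha,\Delta)\in D^{\pm}u(t,x)$ that appears in the evolution setting). This compatibility, however, is precisely what is established during the proof of Theorem \ref{comparisonevolution}, so no new technical ingredient is needed and the argument reduces to the two steps above.
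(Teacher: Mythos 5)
Your proposal is correct and follows essentially the same route as the paper: the interior viscosity inequalities are obtained by applying Proposition \ref{familias de subsoluciones} to the open subset $A=(0,\infty)\times\Omega$ of the Finsler manifold $N=(0,\infty)\times M$ (using that restriction to $A$ does not change the semicontinuous envelopes at interior points), and the initial condition follows from the sandwich $s_0\le (u^*)_*\le u^*\le s_1$ together with the continuity of $s_0,s_1$ and the equality $s_0(0,\cdot)=s_1(0,\cdot)=h$. Your explicit remarks on the product bump function and the product norm $\|(r,v)\|_{(t,x)}=|r|+\|v\|_x$ only make precise what the paper takes for granted from the proof of Theorem \ref{comparisonevolution}.
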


\begin{proof}

 First, let us recall that for a function $g:[0,\infty)\times \Omega \to \mathbb R$ and the restriction ${r}=g|_{A}$, we have
${r}^*(t,x)=g^*(t,x)$  and
${r}_*(t,x)=g_*(t,x)$ for $(t,x)\in A$. Also, recall that $N$ is a   ${C}^1$ Finsler manifold modeled on a Banach space with
a $C^1$ Lipschitz bump function.

Thus,
 the inequality $F(t,x,(u^*)_t(t,x),(u^*)_x(t,x),u^*(t,x))\le 0$ for all the superdifferentials of $u^*$ in $A$   is a consequence of
Proposition \ref{familias de subsoluciones} for the open subset $A$ of the Finsler manifold $N$.
For the initial condition, notice that $s_0\le u\le s_1$ and $s_0, s_1$ are continuous.
Therefore, $s_0\le u^*\le s_1$. In particular, $s_0(0,x)\le u^*(0,x)\le s_1(0,x)$ for all
$x\in \Omega$ and thus $s_0(0,x)= u^*(0,x)= s_1(0,x)=h(x)$ for all $x\in \Omega$.

 Analogously, $v=(u^*)_*$ is a supersolution:  The inequality $F(t,x,v_t(t,x),v_x(t,x), v(t,x))\ge 0$  for all the subdifferentials of $v$ in $A$  is a consequence of
Proposition \ref{familias de subsoluciones} for the open subset $A$ of the Finsler manifold $N$. The initial condition is obtained from the fact that
 $s_0\le u^*\le s_1$ and $s_0, s_1$ are continuous. Thus, $s_0\le (u^*)_*\le s_1$ and
 then $s_0(0,x)= (u^*)_*(0,x)= s_1(0,x)=h(x)$ for all $x\in \Omega$.
 \end{proof}

\begin{Cor} \label{existenciaFinsler2}
Let $M$ be a complete ${C}^1$ Finsler manifold modeled on a Banach space with
a $C^1$ Lipschitz bump function. Let $H:[0,\infty)\times M\times \Real \to \Real$ be the Hamiltonian of \eqref{E3}. Assume that $H$ verifies condition (A), the initial condition $h:M\to \mathbb R$ is $L$-Lipschitz and bounded, and there are constants $K_0,K_1\in \mathbb R$ such that
\begin{equation*}K_0=\inf\{H(t,x,m):\,(t,x)\in (0,\infty)\times M,\,|m|\le L \}
 \end{equation*}
and
\begin{equation*} K_1=\sup\{H(t,x,m):\,(t,x)\in (0,\infty)\times M, \,|m|\le L\}.
\end{equation*}
Let us define
\begin{align*}
\mathcal F=\{w:&[0,\infty)\times M\to \mathbb R:\,
w \text{ is a subsolution of  \eqref{E3}  and } \\ 
& \qquad -K_1 t +h(x)\le w(t,x)\le -K_0t +h(x) \text{ for } (t,x)\in (0,\infty)\times M\}\end{align*}
 and $u=\sup \mathcal F$. Then,   $u^*$ is a viscosity subsolution and  $(u^*)_*$  is a viscosity supersolution  \eqref{E3}. Moreover,

 \begin{enumerate}
 \item if $u^*$ is continuous, then
 $u^*=(u^*)_*$ and $u^*$ is a viscosity solution of  \eqref{E3};
 \item if $u^*$ is locally Lipschitz, then
 $u^*$ is the unique viscosity solution of  \eqref{E3} which is bounded in $[0,T)\times M$ for every $T>0$.
 \end{enumerate}

 \end{Cor}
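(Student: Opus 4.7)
The plan is to apply Corollary \ref{existenceevolution1} to the natural pair of barrier functions $s_0(t,x)=-K_1 t+h(x)$ and $s_1(t,x)=-K_0 t+h(x)$, and then to invoke the comparison Theorem \ref{comparisonevolution} for uniqueness. Clearly $s_0\le s_1$ on $[0,\infty)\times M$, $s_0(0,x)=s_1(0,x)=h(x)$, both functions are continuous, and they are precisely the barriers bounding every $w\in\mathcal F$ by the definition of $\mathcal F$.

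First I would verify that $s_0$ is a viscosity subsolution of \eqref{E3}. Given $(\alpha,\Delta)\in D^+s_0(t_0,x_0)$, realized by a $C^1$ test function $\psi$ with $d\psi(t_0,x_0)=(\alpha,\Delta)$ and $s_0-\psi$ attaining a local maximum at $(t_0,x_0)$, I would restrict to the slice $t=t_0$, noting that $h(\cdot)-\psi(t_0,\cdot)$ has a local maximum at $x_0$, so $\Delta=d_x\psi(t_0,x_0)\in D^+h(x_0)$; then restrict to $x=x_0$, where the $C^1$ function $-K_1 t-\psi(t,x_0)$ has a local maximum at $t_0$, forcing $\alpha=\partial_t\psi(t_0,x_0)=-K_1$. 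Since $h$ is $L$-Lipschitz, the Remark following Theorem \ref{uniqeik} gives $\|\Delta\|_{x_0}\le L$, and hence
\[
\alpha+H(t_0,x_0,\|\Delta\|_{x_0})=-K_1+H(t_0,x_0,\|\Delta\|_{x_0})\le 0
\]
by the definition of $K_1$. The initial condition $s_0(0,\cdot)=h$ is immediate, and a symmetric argument shows that $s_1$ is a viscosity supersolution. Corollary \ref{existenceevolution1} (applied with $F(t,x,\alpha,\Delta,r)=\alpha+H(t,x,\|\Delta\|_x)$) then gives that $u^*$ is a viscosity subsolution and $(u^*)_*$ is a viscosity supersolution of \eqref{E3}. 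The sandwich $s_0\le (u^*)_*\le u^*\le s_1$ forces $u^*(0,x)=(u^*)_*(0,x)=h(x)$ and shows that $u^*$ is bounded on $[0,T)\times M$ for every $T>0$.

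For assertion (1), if $u^*$ is continuous then $(u^*)_*=u^*$, so $u^*$ is simultaneously a viscosity sub- and supersolution, i.e., a viscosity solution of \eqref{E3}. For assertion (2), local Lipschitzness of $u^*$ implies continuity, so $u^*$ is a viscosity solution by (1); to obtain uniqueness I would take any viscosity solution $g$ of \eqref{E3} bounded on $[0,T)\times M$ for every $T>0$, and apply Theorem \ref{comparisonevolution} to the pair $(u^*,g)$, the required local Lipschitz hypothesis on $(0,\infty)\times M$ being supplied by $u^*$, to conclude $g\ge u^*$; swapping the roles of subsolution and supersolution yields $u^*\ge g$, and hence $g=u^*$. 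The main technical point I expect is the slice-decomposition of $D^+s_0$ and $D^-s_1$ on the product $\mathbb R\times M$; once that is cleanly argued, the remainder is a direct bookkeeping of the previously developed machinery.
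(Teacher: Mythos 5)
Your proposal is correct and follows essentially the same route as the paper: take $s_0(t,x)=-K_1t+h(x)$ and $s_1(t,x)=-K_0t+h(x)$ as barriers, apply Corollary \ref{existenceevolution1}, use the sandwich $s_0\le(u^*)_*\le u^*\le s_1$ for the initial condition and boundedness, and invoke Theorem \ref{comparisonevolution} twice for uniqueness. The only difference is that you spell out the slice argument showing $s_0$ and $s_1$ are a viscosity sub- and supersolution (correctly, using that $\|\Delta\|_{x_0}\le L$ for $\Delta\in D^{\pm}h(x_0)$ and the definition of $K_0,K_1$), a verification the paper merely asserts.
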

 \begin{proof} Notice that $s_0(t,x)=-K_1 t +h(x)$, for $(t,x)\in [0,\infty)\times M$ is a viscosity subsolution of \eqref{E3} and $s_1(t,x)=-K_0 t +h(x)$, for $(t,x)\in [0,\infty)\times M$ is a viscosity supersolution of \eqref{E3}.  Corollary \ref{existenceevolution1} yields $u^*$ and $(u^*)_*$ are respectively a viscosity subsolution and a viscosity supersolution  of \eqref{E3}.

 If, in addition, we assume that $u^*$ is continuous, then by the definition of lower semicontinuous envelope, $u^*=(u^*)_*$ and therefore it is a viscosity solution of \eqref{E3}.

 If, in addition, we assume that $u^*$ is locally Lipschitz, the inequality 
$s_0\le u^*\le s_1$
 in $[0,\infty)\times M$  yields the boundedness of
 $u^*$ in $[0,T)\times M$ for all $T>0$. Therefore, we can apply the comparison result given in Theorem \ref{comparisonevolution} to obtain that $u^*$ is the unique viscosity solution
 of \eqref{E3}  which is bounded on $[0,T)\times M$ for all $T>0$. Thus, if there exists  $w$  a different viscosity solution of \eqref{E3}, then there is $T_0>0$ such that $w$ is not  bounded in $[0,T_0)\times M$).
 \end{proof}

\noindent {\bf Acknowledgments.} The authors warmly thank the anonymous reviewer for providing very helpful comments and references.

\end{document}